\newtheorem{theorem}{Theorem}
\newtheorem{lemma}{Lemma}
\newtheorem{proposition}{Proposition}
\newtheorem{corollary}{Corollary}
\newtheorem{definition}{Definition}
\theoremstyle{remark}
\newtheorem{remark}{Remark}
\newtheorem{example}{Example}
\definecolor{dark-gray}{gray}{0.25} 
\definecolor{dark-blue}{RGB}{0,0,139}
\newcommand{\ms}[1]{{\color{dark-blue} #1}}
\title{Invariant probabilities for discrete time linear dynamics via thermodynamic formalism}
\author{Artur O. Lopes\thanks{IME - UFRGS. Partially supported by CNPq}\;, Ali Messaoudi\thanks{MAT - UNESP. Partially supported by CNPq, project   311018/2018-1, and FAPESP, project 2013/24541-0}\;, Manuel Stadlbauer\thanks{IM - UFRJ. Partially Supported by CNPq, project 312632/2018-5} \; and \\  Victor Vargas\thanks{IME - UFRGS. Supported by INCTMat-CAPES grant}\;.  
}
\begin{document}

\maketitle

{\bf Abstract:}  We show the existence of invariant ergodic $\sigma$-additive probability measures with full support on $X$ for a class of linear operators 
$L: X \to X$, where $L$ is a weighted shift operator and $X$ either is the Banach space $c_0(\mathbb{R})$ or $l^p(\mathbb{R})$ for $1\leq p<\infty$. In order to do so, we adapt ideas from Thermodynamic Formalism as follows. For a given bounded Hölder continuous potential $A:X \to \mathbb{R}$, we define a transfer operator $\mathcal{L}_A$ which acts on continuous functions on $X$ and  prove that this operator satisfies a Ruelle-Perron-Frobenius theorem. That is, we show the existence of an eigenfunction for $\mathcal{L}_A$ which provides us with a normalized potential $\overline{A}$ and an action of the dual operator $\mathcal{L}_{\overline{A}}^*$ on the $1$-Wasserstein space of probabilities on $X$ with a unique fixed point, to which we refer to as Gibbs probability.
 It is worth noting that the definition of $\mathcal{L}_A$ requires an  {\it a priori} probability  on the kernel of $L$. These results are extended to a wide class of operators with a non-trivial kernel defined on separable Banach spaces.

\vspace{2mm}

{\footnotesize {\bf Keywords:} Gibbs states, linear dynamics, Ruelle operator, weighted shifts.}

\vspace*{2mm}

{\footnotesize {\bf Mathematics Subject Classification (2010)}: 
28D05, 
37A25, 
47A35 
}

\vspace*{2mm}

\section{Introduction}

The main goal in here is to show existence of invariant $\sigma$-additive probabilities for linear dynamical systems in discrete time for a certain class of 
linear operators for which the formalism of the Ruelle operator is applicable. That is, we obtain our main results by adapting the formalism previously considered for non-compact generalized $XY$ models where the dynamics are given by the shift (see \cite{LMMS}, \cite{LV} and \cite{CSS}). In here,  the dynamics are obtained from a linear operator $L$ but as in the case of the $XY$-model, we require the choice of an {\it a priori} probability.

We point out that the existence of invariant probabilities for $ l^p(\mathbb{R}),\; 1 < p < \infty$,    already is known, but our methods are quite different from the standard approach.  We believe the connection with thermodynamic formalism is interesting in itself, because - among other reasons -  allows an extension of the previously known results beyond the case of linear operators defined on reflexive Banach spaces.

The use of the Ruelle operator (also called transfer operator) as a tool for the construction of Gibbs measures - which are, in particular, ergodic invariant measures with full support - is a usual approach in the area of thermodynamic formalism. Moreover, there are several works in which these properties  are studied  for a wide variety of both compact and non-compact dynamical contexts (see for instance \cite{BCLMS, LV, MU, CSS, Sar}). Thus, it is natural to choose this tool in order  to solve an important question  in linear dynamics: the existence of probability measures satisfying the above mentioned properties.

Linear dynamics is a relatively young branch of mathematics in which dynamical properties of linear operators defined on Fr\'echet spaces are studied and, in particular, some interesting phenomena even in the case of Banach spaces appear. In the finite dimensional case, it is widely known that linear dynamical systems are completely characterized by their corresponding Jordan canonical form. However, in infinite dimension, interesting properties such as the existence of dense orbits, chaos in the sense of Devaney and topological equivalence of invariant subsets with any dynamical system defined on a compact metric space might occur. For an interesting example of a linear operator defined on a Hilbert space, see for instance \cite{BaMa, GrMa}.

Let $X$ be a topological vector space and $T: X \to X$ be a linear continuous operator. We say that $(X,T)$ - or simply the map $T$ - is {\it uniform hypercyclic}, if it has a dense orbit in $X$. In the particular case that $X$ is a separable Banach space, this property is equivalent to say that $T$ is {\it topologically transitive}, that is, for all non-empty open sets $U, V \subset X$, there is an integer $n \geq 0$, such that, $T^{n}(U) \cap V \neq \emptyset$. Moreover, we call the map $T$ {\it frequently hypercyclic}, if for every non-empty open set $V \subset X$, the set $N(x, V) = \{k \in \mathbb{N},\; T^{k}(x) \in V \}$ has positive lower density, i.e $ \liminf_{n \to \infty} \; \frac{1}{n} \# \left(N(x, V) \cap \{1, ..., n\}\right) > 0 \;$.
On the other hand, we call the map $T$ {\it Devaney chaotic}, if it is topologically transitive and has a dense subset of periodic points. We say that $T$ is {\it topologically mixing}, if for all non-empty open subsets $U, V \subset X$, there exists an integer $N > 0$, such that, $T^{n}(U) \cap V$ is not empty for all $n \geq N$.

A typical example of mixing, frequently hypercyclic and Devaney chaotic operator is   $\alpha L,\; \alpha > 1$, where $L$ is the shift operator acting on the Banach space $X =  l^p(\mathbb{R}),\; 1 \leq p < \infty$, i.e. $L((x_n)_{n\geq1}) = (x_{n+1})_{n\geq1}$. Furthermore, the operator $\alpha L,\; \alpha > 1$, acting  on $c_0 (\mathbb{R})$ is mixing, frequently hypercyclic and Devaney chaotic as well.
The study of the above  properties is a central problem in the area of discrete time  Linear Dynamical Systems (see for instance \cite{BaMa}, \cite{BeMe}, \cite{GrMa}, \cite{BCM}).


In the context of ergodic theory on linear dynamics, there are some results related to the existence of invariant probability measures when the system is frequently hypercyclic and $X$ is a reflexive Banach space (see for example \cite{GriMa}). We point out that in there, the existence of ergodic measures with full support are obtained through approximations of measures supported on dense orbits of the system. Here we prove a similar kind of result (even for non-reflexive Banach spaces as $l^1(\mathbb{R})$ and $c_0(\mathbb{R})$) in the  context of weighted shifts using a different technique: we adapt classical tools of thermodynamic formalism to extend this result to a large class of linear operators defined on separable and non-necessarily reflexive Banach spaces.

Let $X$ be one of the Banach spaces $c_0(\mathbb{R})$ or $l^p ( \mathbb{R})$, for $ 1 \leq p < \infty$, where $c_0(\mathbb{R})$ refers to the Banach space of sequences  
$(x_n)_{n \in \mathbb{N}}$,  with $\lim_{n \to \infty}  x_n= 0$ equipped with the $\sup$-norm. For a fixed operator $T: X \to X$, a H\"older continuous potential $A: X \to \mathbb{R}$ and a suitable {\it a priori} probability measure $m$ defined on the kernel of $T$, we are able to associate a \emph{Gibbs probability measure} for the potential $A$  (see Definition \ref{Ro} and item c) in Theorem \ref{RPF}) which will be $T$-invariant, ergodic and has full support.

Our first result is Theorem \ref{RPF}.  From this we will obtain a $\sigma$-additive ergodic probability measure which is invariant for the weighted shift $L$ and has full support. In Proposition \ref{RPF-separable} and Corollary \ref{corollary-RPF-separable}, we then extend the previous results to a large class of linear operators defined on separable Banach spaces, among them the class of uniform hypercyclic linear operators. Furthermore, these results are satisfied for the class of linear continuous operators with non-trivial kernel defined on separable Banach spaces.
An important issue here is the use of ideas of transport theory which are employed in the Appendix (Section \ref{ap})
in order to prove item b) of Theorem 1, that is the existence of a fixed point of the dual of the Ruelle operator through a contraction argument (for related results, see  \cite{Sta, BS,KLS, K, Lo, Elis}). However, the arguments here are more complex due to the   metric structure of $X$. For example, in contrast to shift spaces (even with respect to uncountable, non-compact alphabets as in \cite{CSS}), it seems to be impossible to obtain uniform contraction rates (see Remark \ref{esta1}).    

The paper is organized as follows. In Section \ref{preliminaries}, we consider some definitions and preliminaries. In Section \ref{RPF-theorem}, we prove the main result for weighted shifts operators. In Section \ref{extension}, we consider the general case of frequently hypercyclic operators  defined on separable Banach spaces. In the Appendix (Section \ref{ap}) we show the remaining assertion of Theorem \ref{RPF} by extending results from transport theory to the setting of linear spaces.

Our thanks to L. Cioletti  for many helpful conversations during the writing of our paper.

\section{Preliminaries}
\label{preliminaries}

Let $X$ be a Banach space and $T: X \to X$ be a linear continuous operator.  Hereafter, with exception of some specific cases, we will denote by $\| \cdot \|_X$ the norm for $X$.
We say that  the map $T$ is {\it positively expansive} in a subspace $Y$ of $X$, if there exists a constant $e > 1$ such that, for each point $\widetilde{x} \in S_Y = \{x \in Y ,\; \|x\|_X = 1\}$, there exists $n \in \mathbb{N}$ with $\|T^n(\widetilde{x})\|_XY\geq e$. Furthermore, $T$ it is said to be {\it uniformly positively expansive}  if there exist $e > 1$ and $m \in \mathbb{N}$ such that, $\|T^m(x)\|_Y \geq e$, for all $x$ in $S_Y$. Here we will assume this property when the subspace $Y$ is the  complement of the kernel of $T$.

Let $c_0(\mathbb{R})$  be the set of real sequences $(x_n)_{n \geq 1}$ such that $\displaystyle \lim_{n \to \infty}  x_n= 0$.  It is widely known that the vector space $c_0(\mathbb{R})$ equipped with the  supremum norm
\[
\|x\|_{c_0(\mathbb{R})} := \sup_{n \geq 1} \vert x_n \vert \;
\]
is a separable, non-reflexive Banach space.
On other hand, we also consider $l^p(\mathbb{R})$, for  $1 \leq p < \infty$, defined as the set of real sequences $(x_n)_{n \geq 1}$ satisfying $ \sum_{n = 1}^{\infty}|x_n|^p < \infty$, equipped with the norm
\[
\|x\|_{l^p(\mathbb{R})} := \left( \sum_{n = 1}^{\infty}|x_n|^p \right)^{\frac{1}{p}} \;.
\]

It is known that $l^p(\mathbb{R})$ is a separable Banach space and that it is reflexive when $1 < p < \infty$.   Our results will include the cases of the Banach spaces $l^1(\mathbb{R})$ and $c_0(\mathbb{R})$ which are separable but not reflexive Banach spaces. In the particular case where $p=2$, the space $l^2(\mathbb{R})$ is a separable Hilbert space when it is equipped with the following inner product
\[
\left\langle x, y \right\rangle_{l^2(\mathbb{R})}\; := \sum_{n =1}^{\infty}x_n y_n \;.
\]

Let $\{e_k\}_{k \geq 1}$ be such that each $e_k$ is a vector of the form $e_k = (\delta_{ik})_{i \geq 1}$. Then, any $x \in X \in \{c_0(\mathbb{R}),\; l^p(\mathbb{R}),\; 1 \leq p < \infty$\} can be written as
$$
\displaystyle x = \sum_{k =1}^{\infty} x_k e_k \;,
$$
where the series above converges in the norm $\|\cdot\|_X$, i.e. $\{e_k\}_{k \geq 1}$ is a Schauder basis for $X$ (in particular, $x_k = \left\langle x, e_k\right\rangle$ for any $k \in \mathbb{N}$ when $p = 2$).

Fix values $0 < c < c'$ and consider a sequence $(\alpha_n)_{n \geq 1}$ satisfying $\alpha_n \in (c, c')$ for each $n \in \mathbb{N}$. The {\it weighted shift} associated to the sequence $(\alpha_n)_{n \geq 1}$  is defined as the linear map $L : X \to X$, where $X \in \{c_0(\mathbb{R}),\;  l^p(\mathbb{R}),\; 1 \leq p < \infty\}$, and
$$
L((x_n)_{n \geq 1}) = (\alpha_n x_{n+1})_{n \geq 1} \;.
$$
%
Note that
$
L(e_1)=0$   and $L(e_n)= \alpha_{n-1} e_{n-1}$,  for all $n \geq 2$.
Besides that, for each $x \in X$ we have $\|L(x)\|_X \leq c'\|x\|_X$, thus, $L$ is a linear continuous operator and $\mathrm{Ker}(L) = \mathrm{span}\{e_1\}$. Moreover, in the case $c > 1$, the linear operator $L$ is {\it positively expanding} on  $E= \mathrm{span} \{e_n,\; n \geq 2\}$  with expanding constant equal to $c$, that is, $\|L(x)\|_X \geq c\|x\|_X$ for all $x \in E$.

  We remark that $L$ as above has always a one dimensional kernel but would like to point out that the results we get in the next two sections can be easily extended to the case where the kernel is finite dimensional. In Section \ref{extension} we will consider an even  more general case of linear operators with non-trivial kernels defined on a separable Banach space.

 From now on, we will use the notation
\[
\beta_k^n := \alpha_k ... \alpha_{k+n-1} \mbox {, for all } k, n  \in \mathbb{N}
\]
 and

\begin{eqnarray}
\label{dnn}
\displaystyle d_n := \inf_{k \geq 1} \beta_k^n \mbox {, for all } n \in \mathbb{N} \;.
\end{eqnarray}


It is widely known (see for instance \cite{Bo}), that the spectrum of $L : X \to X$, when $X$ is $c_0(\mathbb{R})$ or $l^p(\mathbb{R}),\; 1 \leq p \leq \infty$, is $\sigma(L)= D(0, r(L))$, where
$D(0, r(L))$ is the closed disc of center $0$ and radius $r(L)$ and $r(L)$ is the spectral radius of $L$ given by
\begin{equation}
r(L) := \lim_ {n \to \infty} \|L^n\|_{op}^{\frac{1}{n}} = \lim_{n \to \infty } \Bigl( \sup_{k \geq 1} \beta_k^n \Bigr)^{\frac{1}{n}} \;,
\label{spectral-ratio}
\end{equation}
where the operator norm $\|\cdot\|_{op}$ is given by
$$
\|L^n\|_{op} := \sup \{\|L^n(x) \|_X,\; \|x\|_X = 1\} \;.
$$

The following characterization is classical and will be useful in the following sections in order to guarantee the existence of eigenfunctions and invariant probabilities associated to the Ruelle operator (and its corresponding dual) which is the main tool to be used in here.

\begin{remark}
\label{swei} The asymptotic behavior of $\beta_k^n$ implies the following for a 
  weighted shift $L$ defined on $c_0(\mathbb{R})$ or $l^p(\mathbb{R})$, for $1 \leq p < \infty$. 
\noindent
\begin{enumerate}
\item
$L$ is topologically transitive if and only if $ \limsup_{n \to \infty} \beta_1^n = \infty$ (\cite{BaMa},  \cite{GrMa}).
\item
$L$ is topologically  mixing if and only if $\ \lim_ {n \to \infty} \beta_1^n = \infty$ (\cite{BeMe}).
\item
$L$ is frequently hypercyclic  in $l^{p}(\mathbb{R})$ if and only if $ \sum_{n=1}^{\infty} (\beta_1^n)^{-p}  < \infty $ (\cite{BR}).
\item $L$ is 
Devaney Chaotic   if and only if $ \sum_{n=1}^{\infty} (\beta_1^n)^{-p}  < \infty $ (\cite{GrMa}).
\item $L$ is 
positively expansive if and only if $ \sup_{n \geq 1} \beta_1^n = \infty$ (\cite{BCM}).
\end{enumerate}
\end{remark}


Given a separable Banach space $X$, we will use the notation $\mathcal{C}(X)$ for the set of continuous functions from $X$ into $\mathbb{R}$ and $\mathcal{C}_b(X)$ for the set of bounded continuous functions from $X$ into $\mathbb{R}$.  As it is well known, $\mathcal{C}_b(X)$ equipped with the uniform norm $\|\cdot\|_{\infty}$ given by $\|\varphi\|_{\infty} := \sup\{|\varphi(x)| : x \in X\}$ is a Banach space.

Given a metric $D$ on $X$, we will denote by $\mathrm{Lip}(D, X)$ the set of Lipschitz continuous functions from $X$ into $\mathbb{R}$. That is, the set of continuous functions satisfying
\[
\mathrm{Lip}_{\varphi, D} := \sup\left\{\frac{|\varphi(x) - \varphi(y)|}{D(x, y)} :\;  x \neq y \right\} < \infty \;.
\]

We will use the notation $\mathrm{Lip}_b(D, X)$ for the set of bounded Lipschitz continuous functions from $X$ into $\mathbb{R}$. Note that for any $0 < \alpha \leq 1$, the set $\mathrm{Lip}(D^{\alpha}, X)$ is the set of $\alpha$-H\"older continuous functions from $X$ into $\mathbb{R}$ and the set $\mathrm{Lip}_b(D^{\alpha}, X)$ is the set of bounded $\alpha$-H\"older continuous functions from $X$ into $\mathbb{R}$, where $D^{\alpha}(x, y) := (D(x, y))^{\alpha}$.

Besides that, given $\delta > 0$, we will use the notation $\mathrm{Lip}^{loc(\delta)}(D, X)$ for the set of continuous functions from $X$ into $\mathbb{R}$, such that
\[
\mathrm{Lip}^{loc(\delta)}_{\varphi, D} := \sup \left\{ 
\frac{|\varphi(x) - \varphi(y)|}{D(x, y)} :  x \neq y,\;D(x, y) < \delta \right\} < \infty.
\]
In addition, we will denote by $\mathrm{Lip}^{loc(\delta)}_b(D, X)$ the set of bounded functions in $\mathrm{Lip}^{loc(\delta)}(D, X)$ and, from now on, we will denote by $D_X$ the metric generated by the norm $\|\cdot\|_X$, that is,
$$
D_X(x, y) = \|x - y\|_X \;.
$$
Observe that the Riesz representation theorem in this setting implies that the dual of $\mathcal{C}_b(X)$, which we will denote by $\mathcal{C}_b(X)^*$, coincides with the set of additive finite Borel signed measures on $X$, denoted by $\mathcal{B}_{\mathcal{A}}(X)$ in here. In addition, we will use the notation $\mathcal{B}(X)$ for the set of sigma additive measures and $\mathcal{P}(X)$ for the set of Borel sigma additive probability measures on $X$.

\section{A RPF Theorem for weighted shifts}
\label{RPF-theorem}

The Ruelle operator is a powerful tool in thermodynamic formalism, especially for H\"older potentials and expanding dynamical systems (in the sense of Ruelle) defined on a compact metric space (see \cite{PP}). Results of this nature were also obtained for the case where the number of preimages of the underlying dynamics of each point is uncountable as, for example, in the case of  the generalized  $X Y$ model via the use of an {\it a priori} probability (see for instance \cite{BCLMS,LMMS,MU,ACR}). Even in the case of a non-compact alphabet $\mathcal{A}$, one can get in specific cases similar results for $\mathcal{A}^\mathbb{N}$ (see \cite{LV,FV,CSS}).

Adapting some ideas from the classical setting (as for instance in \cite{LV}) we show in this section the existence of invariant probabilities for the above defined linear dynamical system via the Ruelle operator. After that, in  section  \ref{extension}, we will show  that our approach can be adapted to the case where the system is given by  a class of frequently hypercyclic operators (see \cite{Gri} and \cite{CGP}) with a finite dimensional kernel.

We now return to the  weighted shift $L:   X \to X$  where $X$ is  $c_0(\mathbb{R})$ or $ l^p(\mathbb{R})$, for $1 \leq p < \infty$.
The analogue of the Ruelle operator on the set $\mathcal{C}_b(X)$ is defined in the following way. We fix an {\it a priori} probability measure $m$ on the Borel sets of $\mathbb{R}$ equipped with the usual topology and assume that the support of the probability measure $m$ is equal to the set $\mathbb{R}$.
%
Since $\mathrm{Ker}(L)$ is isometrically isomorphic with $\mathbb{R}$ it follows that $m$ induces an {\it a priori} probability measure on $\mathrm{Ker}(L)$ - which we will also denote by the same letter $m$.

As an example, one could take $m$ as the Gaussian distribution on $\mathbb{R}$ with mean zero and variance $1$. That is $m = f\;dr$, with $f(r) = \frac{1}{\sqrt{2\pi}}e^{-r^2/2}$.

We say that a potential $A \in \mathcal{C}(X)$ has {\it summable variation} if 
\begin{equation}
\label{summable-variation-weighted}
V(A) = \sum_{n = 1}^{\infty} V_n(A) < \infty \;,
\end{equation}
where $V_n(A) = \sup\{|A(x) - A(y)| : x, y \in X,\; x_i = y_i,\; 1 \leq i \leq n\}$. Note that \eqref{summable-variation-weighted} does not imply that $A$ is a bounded potential. We will denote by $\mathcal{SV}(X)$ the set of potentials $A \in \mathcal{C}(X)$ satisfying \eqref{summable-variation-weighted}.

\begin{remark} We would like to point out that in the setting of linear dynamics in contrast to shift spaces (even with respect to uncountable, non-compact alphabets as in \cite{CSS}), there are bounded Lipschitz functions which are not in $\mathcal{SV}(X)$. For example, the function $A(x) := \arctan \|x\|$ is an element of $\mathrm{Lip}_b(D_X, X)$, but as $V_n(A) = \pi$ for any $n \in \mathbb{N}$, the function $A$ does not have summable variations.
On the other hand, $\mathrm{Lip}_b(D_X^\alpha, X) \cap \mathcal{SV}(X)$ is non-empty which follows from the following construction. For $x,y \in X$, set 
\[ 
D_{\hbox{\tiny shift}}(x,y) := 
\begin{cases}
\left( \sum_n \min(\{1,|x_n - y_n|^p  \}) 2^{-n} \right)^{1/p} &: X= l^p(\mathbb{R}) \\
\sup (\{ \min(\{1,|x_n - y_n| \}) 2^{-n} : n \in \mathbb{N} \})  &: X= c_0(\mathbb{R}) 
\end{cases}
\]
Note that $D_{\hbox{\tiny shift}}(x,y) \leq D_X (x,y)$ for any pair $x,y$ with $\sup |x_n - y_n| \leq 1$ and that, as $D_{\hbox{\tiny shift}}(x,y) \leq 1$ 
for any $x,y \in X$, each function in  $ \mathrm{Lip}(D_{\hbox{\tiny shift}}^\alpha, X)$ is bounded. Moreover, it follows from  $\sum_n 2^{-n} < \infty$  that any function in  $ \mathrm{Lip}(D_{\hbox{\tiny shift}}^\alpha, X)$ has   
summable variations. Hence, 
\[ \mathrm{Lip}(D_{\hbox{\tiny shift}}^\alpha, X) \subset \mathrm{Lip}_b(D_X^{\alpha}, X) \cap  \mathcal{SV}(X).\] 
Finally, as any function in $\mathrm{Lip}_b(D_X^{\alpha}, X)$ which only depends on a finite number of coordinates of $X$ 
is an element of $\mathrm{Lip}(D_{\hbox{\tiny shift}}^\alpha, X)$, the set of $D_X^\alpha$-Lipschitz function with summable variations is non-trivial.
These observations also provide an explanation of the phenomena that shift spaces whose alphabet is a standard Borel space show an exponential contraction under the iteration of the transfer operator  (see \cite{CSS}). However, this is not the case in the setting of linear dynamics (see Remark \ref{esta1}). Namely, we obtain contraction to zero with time, but without an explicit decay rate,  as a consequence of Theorem \ref{theo:conformal-measure}. 
\end{remark}

We now define the {\it Ruelle operator}. 

\begin{definition}
Given the {\it a priori} probability $m$ and a potential $A \in \mathrm{Lip}_b(D_X^{\alpha}, X)$, the Ruelle operator is defined as the map assigning to each function  $\varphi \in \mathcal{C}_b(X)$ the function
\begin{equation}
\mathcal{L}_A(\varphi)(x) := \int_{Lv = x} e^{A(v)}\varphi(v)\; d m(v) \;.
\label{Ruelle-operator}
\end{equation}
\end{definition}

Using the coordinates induced by the Schauder basis $\{e_k\}_{k \geq 1}$ it is possible to write the equation \eqref{Ruelle-operator} \ms{as}
\begin{equation}
\mathcal{L}_A(\varphi)(x_1, x_2, ...) := \int_{\mathbb{R}} e^{A\bigl(r, \frac{x_1}{\alpha_1}, \frac{x_2}{\alpha_2}, ...\bigr)}\varphi\bigl(r, \frac{x_1}{\alpha_1}, \frac{x_2}{\alpha_2}, ...\bigr) d m(r) \nonumber \;.
\end{equation}
It is easy to  verify that $\mathcal{L}_A(\varphi) \in \mathcal{C}_b(X)$ (see the proof of Lemma \ref{lemma-Holder}) and that the $n$-th iterate ($n \geq 2$) of the Ruelle operator is given by 
\begin{align*}
\mathcal{L}^n_A(\varphi)(x) 
= & \int_{\mathbb{R}^n} e^{\sum_{j = 1}^n A\Bigl(r_j, \frac{r_{j-1}}{\beta^1_1}, ... , \frac{r_1}{\beta^{j-1}_1}, \frac{x_1}{\beta^j_1}, \frac{x_2}{\beta^j_2}, ...\Bigr)} \\
& \quad \times \varphi\Bigl(r_n, \frac{r_{n-1}}{\beta^1_1}, ... , \frac{r_1}{\beta^{n-1}_1}, \frac{x_1}{\beta^n_1}, \frac{x_2}{\beta^n_2}, ...\Bigr) dm(r_1) \cdots dm(r_n).
\end{align*}

\begin{definition} We say that a potential $A: X \to \mathbb{R}$ is normalized if  $\mathcal{L}_A(1)=1$.
\end{definition}

\begin{remark}\label{remark:normalization}

Basic examples for normalized potentials are  $A=0$ or any potential which only depends on the first coordinate, that is 
$A(r_1,r_2,r_3,..)=A(r_1)$ with respect to the coordinates induced by the Schauder basis, and such that $\int e^{A(r_1) }d m (r_1)=1$. A further important example is given by the following normalization process. Assume that 
$\mathcal{L}_A(f) = \lambda f $ for some $\lambda > 0$ and $f: X \to (0,\infty)$. Then, for $\overline{A} := A + \log f - \log f \circ L - \log \lambda$ and $x\in X$,
\[
\mathcal{L}_{\overline{A}}(1)(x) = \frac{\mathcal{L}_A(f)(x)}{\lambda f(x)} = \frac{\lambda f(x)}{\lambda f(x)} =1.
\]
Hence, $\overline{A}$ is normalized and $\overline{A}$ is therefore known as the normalization of $A$.  
Finally, note that, if $A$ is normalized, then the dynamics of $L$ and the action of $\mathcal{L}_A$ on $ \mathcal{C}_b(X)$ are connected through 
\[
\mathcal{L}_A^n(\varphi \circ L^n) = \varphi, \quad \forall n \in \mathbb{N}.
\]
\end{remark}

In the next Lemma we will show that for any  potential $A \in \mathrm{Lip}_b(D_X^{\alpha}, X)$, the Ruelle operator $\mathcal{L}_{A}$ preserves the set of $\alpha$-H\"older continuous functions.

\begin{lemma}
\label{lemma-Holder}
Consider $X =c_0(\mathbb{R})$, or $X= l^p(\mathbb{R}), 1 \leq p < \infty,$ and $A \in \mathrm{Lip}_b(D_X^{\alpha}, X)$. Then, for each $n \geq 1$ and any $\delta > 0$, the $n$-th iterate of the Ruelle operator preserves 
$\mathrm{Lip}_b(D_X^{\alpha}, X)$ and $\mathrm{Lip}^{loc(\delta)}_b(D_X^{\alpha}, X)$. Moreover, for
\[ D_n := \sum_{j=1}^n (d_j)^{-\alpha}, \ \ C_A:= \mathrm{Lip}_{A, D_X^{\alpha}}  \]
we obtain the following estimates.
\begin{enumerate}[a)]
\item For any $\varphi \in \mathrm{Lip}_b(D_X^{\alpha}, X)$ and $x,y \in X$, $x \neq y$ we have 
{
\begin{equation}
\label{n-th-Holder-constant}
\frac{|\mathcal{L}^n_A(\varphi)(x) - \mathcal{L}^n_A(\varphi)(y)|}{\|x-y\|_X \mathcal{L}^n_A(1)(x) 
} 
\leq
(d_n)^{-\alpha} \mathrm{Lip}_{\varphi, D_X^{\alpha}} + \|\varphi \|_\infty
C_A D_n \frac{e^{2n \|A\|_\infty} -1}{2n \|A\|_\infty}. 
\end{equation} 
}
\item For any $\varphi \in \mathrm{Lip}^{loc(\delta)}_b(D_X^{\alpha}, X)$ and $x,y \in X$, $x \neq y$ and $\|x-y\|_X< \delta $, we have  
\begin{equation}
\label{n-th-local-Holder-constant}
\frac{|\mathcal{L}^n_A(\varphi)(x) - \mathcal{L}^n_A(\varphi)(y)|}{\|x-y\|_X \mathcal{L}^n_A(1)(x) 
} 
\leq
(d_n)^{-\alpha} \mathrm{Lip}_{\varphi, D_X^{\alpha}} + \|\varphi \|_\infty
\frac{e^{C_A D_n \delta^\alpha } -1}{\delta^\alpha}
.
\end{equation} 
\end{enumerate}
\end{lemma}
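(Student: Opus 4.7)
The plan is to start from the explicit formula for $\mathcal{L}_A^n$ given in the excerpt and apply the classical add-and-subtract trick. Using a common integration in $(r_1,\dots,r_n) \in \mathbb{R}^n$ and writing
\[
T_n(r,x) := \bigl(r_n,\tfrac{r_{n-1}}{\beta_1^1},\dots,\tfrac{r_1}{\beta_1^{n-1}},\tfrac{x_1}{\beta_1^n},\tfrac{x_2}{\beta_2^n},\dots\bigr), \quad S_n A(r,x) := \sum_{j=1}^n A\bigl(r_j,\tfrac{r_{j-1}}{\beta_1^1},\dots,\tfrac{r_1}{\beta_1^{j-1}},\tfrac{x_1}{\beta_1^j},\dots\bigr),
\]
I would decompose $\mathcal{L}_A^n(\varphi)(x)-\mathcal{L}_A^n(\varphi)(y)$ into the sum of $\int e^{S_nA(r,x)}[\varphi(T_n(r,x))-\varphi(T_n(r,y))]\,dm^n$ and $\int [e^{S_nA(r,x)}-e^{S_nA(r,y)}]\varphi(T_n(r,y))\,dm^n$, and bound each term separately. (I read the factor $\|x-y\|_X$ in the denominator of the statement as $\|x-y\|_X^\alpha$, since the estimates naturally produce a Hölder factor; the exponent $\alpha$ in $D_X^\alpha$ forces this.)

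The central geometric observation is that the first $n$ coordinates of $T_n(r,x)$ and $T_n(r,y)$ coincide (they depend only on $r$), while the remaining ones are obtained from those of $x,y$ via the uniform contraction $x_k\mapsto x_k/\beta_k^n$ with $\beta_k^n\geq d_n$. Since this preserves the $\ell^p$ and $c_0$ norm structures, one gets $\|T_n(r,x)-T_n(r,y)\|_X \le \|x-y\|_X/d_n$ in both settings. Applying the $\alpha$-Hölder (or local $\alpha$-Hölder, for part b)) bound for $\varphi$ and integrating against the probability $m^n$ yields the first summand $(d_n)^{-\alpha}\mathrm{Lip}_{\varphi,D_X^\alpha}\cdot\mathcal{L}_A^n(1)(x)\cdot\|x-y\|_X^\alpha$ of the right-hand side.

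The main obstacle lies in the second term, where I must recover the specific factor $(e^{2n\|A\|_\infty}-1)/(2n\|A\|_\infty)$. Using that the $j$-th summand of $S_nA(r,\cdot)$ depends on $x$ only through the scaling $x_k/\beta_k^j$, I obtain the telescoping bound $|S_nA(r,x)-S_nA(r,y)|\leq C_A D_n \|x-y\|_X^\alpha$. Factoring out $e^{S_nA(r,x)}$ gives $|e^{S_nA(r,x)}-e^{S_nA(r,y)}| \leq e^{S_nA(r,x)}(e^{|\Delta|}-1)$, where $\Delta := S_nA(r,y)-S_nA(r,x)$. The key trick is convexity of $s\mapsto e^s-1$ on $[0,s_0]$: since the chord from the origin dominates the graph, $e^s-1\leq s\cdot(e^{s_0}-1)/s_0$ for $s\in[0,s_0]$. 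For part a) I apply this with $s_0=2n\|A\|_\infty$ (the trivial a priori bound $|\Delta|\le 2n\|A\|_\infty$), and for part b) with the sharper $s_0 = C_A D_n\delta^\alpha$ (valid because $\|x-y\|_X<\delta$). Multiplying by $\|\varphi\|_\infty$ and integrating produces exactly the two stated tail terms.

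Finally, the preservation statements fall out almost for free: boundedness of $\mathcal{L}_A^n(\varphi)$ follows from $\mathcal{L}_A^n(1)\leq e^{n\|A\|_\infty}$ and $\|\varphi\|_\infty<\infty$, while the (local) Hölder continuity of $\mathcal{L}_A^n(\varphi)$ is exactly what the estimates \eqref{n-th-Holder-constant} and \eqref{n-th-local-Holder-constant} assert, once we observe that $\mathcal{L}_A^n(1)$ is uniformly bounded. In particular, for part b), pairs with $\|x-y\|_X\geq\delta$ are controlled by boundedness, so the local Hölder class is preserved.
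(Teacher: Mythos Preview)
Your proof is correct and follows essentially the same approach as the paper: the same add-and-subtract decomposition of $\mathcal{L}_A^n(\varphi)(x)-\mathcal{L}_A^n(\varphi)(y)$, the same contraction estimate $\|T_n(r,x)-T_n(r,y)\|_X \leq d_n^{-1}\|x-y\|_X$ for the first term, and the same Birkhoff-sum bound $|\Delta|\leq C_A D_n\|x-y\|_X^\alpha$ for the second. The only visible difference is how the constant in part a) is extracted: the paper takes the minimum of the two bounds $(e^{C_A D_n t}-1)/t$ and $(e^{2n\|A\|_\infty}-1)/t$ and evaluates at the crossover $t=2n\|A\|_\infty/(C_A D_n)$, whereas your chord inequality $e^s-1\leq s\,(e^{s_0}-1)/s_0$ with $s_0=2n\|A\|_\infty$ reaches the same factor in one step---a cosmetic variation, not a different route.
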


\begin{proof}
 First note that for $x= (x_i)$, $y= (y_i) \in X$,
\begin{align*}
& D_X
\left(\left(r_j, \ldots  {r_1}/{\beta^{j-1}_1} , {x_1}/{\beta^j_1}, \ldots \right), \left(r_j, \ldots  {r_1}/{\beta^{j-1}_1} , {x_1}/{\beta^j_1}, \ldots \right) \right)
\\
= & \left\|
\left(  {x_1}/{\beta^j_1}, {x_2}/{\beta^j_2} \ldots \right) - \left(  {y_1}/{\beta^j_1}, {y_2}/{\beta^j_2} \ldots \right)\right\|_X \leq (d_j)^{-1} D_X(x,y). 
\end{align*}
It now follows from Hölder continuity of $A$ that 
\begin{align*}
(\ast) & := \left|
e^{\sum_{j = 1}^n A\left(r_j, \ldots  {r_1}/{\beta^{j-1}_1} , {x_1}/{\beta^j_1}, \ldots \right)} 
- 
e^{\sum_{j = 1}^n A\left(r_j, \ldots  {r_1}/{\beta^{j-1}_1} ,{y_1}/{\beta^j_1}, \ldots \right)
} 
\right|
\\
& = 
e^{\sum_{j = 1}^n A\left(r_j, \ldots  {x_1}/{\beta^j_1}, \ldots \right)} 
\left| 
e^{\sum_{j = 1}^n A\left(r_j, \ldots  {x_1}/{\beta^j_1}, \ldots \right) - A\left(r_j, \ldots  {y_1}/{\beta^j_1}, \ldots \right) }
- 1
\right|
\\
& \leq 
e^{\sum_{j = 1}^n A\left(r_j, \ldots  {x_1}/{\beta^j_1}, \ldots \right)} 
\left| 
e^{   
\mathrm{Lip}_{\varphi, D_X^{\alpha}}
\sum_{j = 1}^n (d_j)^{-\alpha} 
 \|x - y\|_X^\alpha
}
- 1
\right|.
\end{align*}
The above then implies that 
\[
(\ast) \leq 
\frac{e^{C_A D_n  \|x - y\|_X^\alpha } -1}{\|x - y\|_X^\alpha}
e^{\sum_{j = 1}^n A\left(r_j, \ldots  {x_1}/{\beta^j_1}, \ldots \right)} \|x - y\|_X^\alpha,
\]
which provides an effective estimate as long as $\|x - y\|_X^\alpha$ is uniformly bounded. That is, for 
$\|x - y\|_X \leq \delta$, 
it follows that
\begin{align*}
 &\bigl| \mathcal{L}_A^n(\varphi)(x) - \mathcal{L}_A^n(\varphi)(y)\bigr|
   \\
&
= \Bigl| \int_{\mathbb{R}^n} e^{\sum_{j = 1}^n A\Bigl(r_j, ... , \frac{r_1}{\beta^{j-1}_1}, \frac{x_1}{\beta^j_1}, ...\Bigr)}\varphi\Bigl(r_n, ... , \frac{r_1}{\beta^{n-1}_1}, \frac{x_1}{\beta^n_1}, ...\Bigr) 
\\
& \ \ -  e^{\sum_{j = 1}^n A\Bigl(r_j, ... , \frac{r_1}{\beta^{j-1}_1}, \frac{y_1}{\beta^j_1}, ...\Bigr)}\varphi\Bigl(r_n, ... , \frac{r_1}{\beta^{n-1}_1}, \frac{y_1}{\beta^n_1}, ...\Bigr) dm(r_1) \cdots dm(r_n) \Bigr| 
\\
&\leq \int_{\mathbb{R}^n} e^{\sum_{j = 1}^n A\Bigl(r_j, ... \frac{x_1}{\beta^j_1} ...\Bigr)}
\bigl|\varphi\Bigl(r_n, ...  \frac{x_1}{\beta^n_1}, ...\Bigr) - \varphi\Bigl(r_n, ...   \frac{y_1}{\beta^n_1}, ...\Bigr)\bigr| dm(r_1) \cdots dm(r_n) 
\\
 &\phantom{\leq} + \|\varphi\|_\infty \int_{\mathbb{R}^n} (\ast) \,
 dm(r_1) \cdots dm(r_n) \\
  &\leq \mathcal{L}^n_A(1)(x) \|x - y\|_X^\alpha \left( d_n^{-\alpha} \mathrm{Lip}_{\varphi, D_X^{\alpha}} 
    +   \|\varphi\|_\infty  \frac{e^{C_A D_n \delta^\alpha } -1}{\delta^\alpha}
    \right), 
\end{align*}
which proves the estimate in \eqref{n-th-local-Holder-constant}. 
 On the other hand, if $\|x - y\|_X^\alpha$ is large, then the uniform boundedness of $A$ implies that
 \[
 (\ast) \leq 
 \frac{e^{2n \|A\|_\infty} -1}{\|x - y\|_X^\alpha}
 e^{\sum_{j = 1}^n A\left(r_j, \ldots  {x_1}/{\beta^j_1}, \ldots \right)} \|x - y\|_X^\alpha
 ,
 \]
In  particular, by choosing $\delta$ such that  
\[
 \frac{e^{2n \|A\|_\infty} -1}{\delta^\alpha} =  \frac{e^{C_A D_n \delta^\alpha } -1}{\delta^\alpha},
\] 
that is $\delta^\alpha = 2n \|A\|_\infty/(C_A D_n)$
one obtains that 
\[
 \min \left\{ \frac{e^{2n \|A\|_\infty} -1}{\delta^\alpha}, 
 \frac{e^{C_A D_n \delta^\alpha }  -1}{\delta^\alpha}
 \right\} \leq C_A D_n  \frac{e^{2n \|A\|_\infty} -1}{2n \|A\|_\infty}. 
 \]
The estimate \eqref{n-th-Holder-constant} follows from this. With respect to the invariance of $\mathrm{Lip}_b(D_X^{\alpha}, X)$ and $\mathrm{Lip}^{loc(\delta)}_b(D_X^{\alpha}, X)$, it suffices to observe that $\|\mathcal{L}^n_A(1)\|_\infty \leq \exp(n \|A\|_ \infty)< \infty$ and apply .
\eqref{n-th-Holder-constant} and \eqref{n-th-local-Holder-constant}, respectively.
\end{proof}

\begin{remark}
\label{pq}
Consider $1 \leq p < q < \infty$, since $l^p(\mathbb{R}) \subset l^q(\mathbb{R})$ and $\|x \|_{l^q(\mathbb{R})} \leq \|x \|_{l^p(\mathbb{R})}$ for all $x \in l^p(\mathbb{R})$, we deduce that if $A \in \mathrm{Lip}_b(D_X^{\alpha}, l^q(\mathbb{R}))$, then  the Ruelle operator preserves the set $\mathrm{Lip}_b(D_X^{\alpha}, l^p(\mathbb{R}))$.
\end{remark}


\begin{remark} We would like to remark now that the reason to consider the action of $\mathcal{L}_A$ on functions which are locally Lipschitz continuous is based on the fact that 
the second term in \eqref{n-th-local-Holder-constant} is uniformly bounded whenever $\sup_n D_n < \infty$. Moreover, a further advantage will become apparent in the appendix (Section \ref{ap}), where we will consider a bounded metric $\widetilde{D}$ on $X$ which is equivalent to the metric {$D_X(x, y)=\|x - y\|_X$, where $\|\cdot\|_X$} stands for norm on $X$. The necessity for this change of metric is twofold. The first reason is based on the simple observation that the  Wasserstein space with respect to a bounded metric contains all Borel probability measures (see, e.g., \cite{Villani09}) and therefore, provided that $A$ is normalized, the dual $\mathcal{L}_A^*$  acts on this space. The second reason is of technical nature as this allows to obtain contraction for probability measures whose supports are arbitrarily distant.    

However, the new metric also requires a change in the underlying function spaces. 
Namely, as it will turn out below,  the space of bounded, locally Lipschitz functions with respect to $D$ will coincide with the space of  Lipschitz functions with respect to $\widetilde{D}$.
\end{remark}
\smallskip

We now define the dual $\mathcal{L}_{A}^\ast$ of $\mathcal{L}_A$ based on the observation that  $\mathcal{L}_A$ acts on the space of bounded Lipschitz functions by Lemma \ref{lemma-Holder}. That is,  
\[ \varphi \to \int \varphi d\mathcal{L}_{A}^\ast(\mu) := \int \mathcal{L}_{A}(\varphi) d \mu \]
defines an action of on the space of finite measures.  
Furthermore, if $A$ is normalized, then $\mathcal{L}_{A}^\ast$ acts on the space of sigma-additive probability measures. In here, we put emphasis on sigma-additivity in order to avoid any ambiguity with finitely additive measures, which are canonical in the similar setting of \cite{CSS}.

\begin{definition} \label{Ro} Given a normalized potential $A: X \to \mathbb{R}$, the sigma-additive probability $\mu_A$ on $X$, such that, $\mathcal{L}_{A}^*(\mu_A)=\mu_A$, will be called    the Gibbs probability for the potential $A$.
\end{definition}

We emphasize here the fact that the {\it Gibbs probability} terminology can have several different meanings depending on the authors of each work under consideration. Here the concept appears when this invariant probability is obtained from Ruelle's Theorem.
The terminology {\it equilibrium probability}  (not considered here) would be reserved for those derived from a principle of maximizing pressure.

Observe  that Remark \ref{remark:normalization} immediately implies that a Gibbs probability  $\mu_A$ is always $L$-invariant. Furthermore, the existence of eigenfunctions of $\mathcal{L}_{A}$ and eigenprobabilities of $\mathcal{L}_{A}^\ast$, as given in Theorem \ref{RPF} below,
provides important tools for solving problems related to the construction of equilibrium probabilities, that is invariant probabilities that maximize the pressure (see \cite{LV1}).




Before the statement of our main result
we introduce the class of weighted shifts $L$ for which Theorem \ref{RPF} holds.
Fix  $0 < c < c'$, with $1 < c'$ and consider a sequence $(\alpha_n)_{n \geq 1}$ satisfying $\alpha_n \in (c, c')$, for each $n \in \mathbb{N}$.
Suppose also that
\begin{eqnarray}
\label{d}
\displaystyle \sum_{n =1}^{\infty} (d_n)^{-\alpha} = d < \infty \;,
\end{eqnarray}
where $d_n$ is given in \eqref{dnn}.
Observe that if $c > 1$, then \eqref{d} is always satisfied. Moreover, since \eqref{d} implies that
$$
\sum_{n =1}^{\infty} (\beta_1^n)^{-p} < \infty\;,
$$
 we deduce as a consequence of  Remark \ref{swei} that the weighted shift $L$  associated to the sequence $(\alpha_n)_{n \geq 1}$ is
uniformly positively expansive and topologically mixing, Devaney chaotic and frequently hypercyclic  (see for instance \cite{BeMe}, \cite{G}, and \cite{GrMa}).
In order to guarantee the existence of invariant measures, we need the following property.
\begin{definition}  \label{Def:adapted}
We say that $m$ has {\it adapted tails} if, for any $\epsilon > 0$ there exists a sequence of positive numbers $(\kappa_n)_{n \geq 1}$, such that, 
\begin{enumerate}
\item $\sum_{n=1}^\infty m(\mathbb{R} \setminus [-\beta_1^n \kappa_n,\beta_1^n \kappa_n]) < \epsilon$;
\item the sequence $ (\kappa_n)_{n \geq 1} $ is in $X$. 
\end{enumerate}
\end{definition}
For examples of measures with adapted tails we refer to Propositions \ref{prop:polynomial-tails} and \ref{prop:exponential-tails}. Recall that the support of a Borelian measure $\nu$ on a metric space $(D, X)$ is defined by $
\hbox{supp}(\nu) := \left\{x \in X :  \nu(\{y \in X: D(x, y)< \epsilon \}) > 0, \;\forall \epsilon > 0  \right\}$.

\begin{theorem} 
\label{RPF}
Assume that $X = c_0(\mathbb{R})$ or $X= l^p(\mathbb{R})$ with  $1 \leq p < \infty$,  that  $L : X \to X$ is a weighted shift satisfying \eqref{d} and that  $A \in \mathrm{Lip}_b(D_X^{\alpha}, X)$.
\begin{enumerate}[a)]
\item If $A \in \mathcal{SV}(X) $, then there exist $\lambda_A > 0$ and  $\psi_A \in \mathrm{Lip}_b(D_X^{\alpha}, X)$ with $\inf_{x \in X} \psi_A (x) > 0$, such that, $\mathcal{L}_A (\psi_A)(x) = \lambda_A \psi_A(x)$, for each $x \in X$. Moreover, $\lambda_A$ is equal to the spectral radius of the action of $\mathcal{L}_A$ on $\mathrm{Lip}_b(D_X^{\alpha}, X)$, and any positive eigenfunction in 
$\mathrm{Lip}_b(D_X^{\alpha}, X)$ is colinear to  $\psi_A$.
\item If $m$ has adapted tails and $\mathcal{L}_A(1) = 1$, then there exists a unique $\sigma$-additive Borel probability measure $\mu_A$ which is a fixed point for the operator $\mathcal{L}^*_{A}$. Moreover, $\lim_{n \to \infty} (\mathcal{L}^n_{A})^\ast(\nu) = \mu_A$ in the weak-$\ast$ topology for any Borel probability measure $\nu$. In particular, $\mu_A$ is an invariant and mixing probability measure  
 (and therefore conservative and ergodic).
\item If $A \in \mathcal{SV}(X) $ and  $m$ has adapted tails, then  there exists a unique fixed point $\mu_A$ for the operator $\mathcal{L}^*_{\overline{A}}$, where   $\overline{A} = A + \log(\psi_A) - \log(\psi_A \circ L) - \log(\lambda_A)$.  
Moreover,  $\mu_A$ is an invariant and mixing sigma-additive probability measure. 
\item If $A \in \mathcal{SV}(X)$ and $\pi_1$ is  the projection on the first coordinate, we get
 $
 \hbox{supp}(\mu_A)  \supset   \pi_1^{-1}(\hbox{supp}(m)) 
 $. From this follows that $\hbox{supp}(\mu_A) = X$.
\end{enumerate}
\end{theorem}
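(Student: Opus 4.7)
\smallskip
\noindent
The plan is to prove the containment pointwise: fix $x\in X$ with $x_1 \in \hbox{supp}(m)$ and $\epsilon>0$, and show $\mu_A(B(x,\epsilon)) > 0$. The fundamental tool is the invariance from part (c), iterated to give $\mu_A = (\mathcal{L}_{\overline{A}}^n)^*(\mu_A)$ for every $n\in\mathbb{N}$, and hence
\[
\mu_A(B(x,\epsilon)) = \int \mathcal{L}^n_{\overline{A}}(\mathbf{1}_{B(x,\epsilon)})(y)\, d\mu_A(y).
\]
Using the explicit iterated-Ruelle formula from Section \ref{RPF-theorem}, the $n$-th preimage of $y$ parameterized by $(r_1,\ldots,r_n)\in\mathbb{R}^n$ is the sequence $z = (r_n, r_{n-1}/\beta_1^1, \ldots, r_1/\beta_1^{n-1}, y_1/\beta_1^n, y_2/\beta_2^n, \ldots)$, so it suffices to find an $n$ for which the integrand is strictly positive on a subset of $X$ of positive $\mu_A$-measure.

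\smallskip
\noindent
To produce a positive-measure set of $y$'s with good tail control, I would invoke tightness of $\mu_A$ (which holds since $X$ is Polish, by Ulam's theorem) to extract a compact, hence bounded, $K\subset X$ with $\mu_A(K)\geq 1/2$, and set $M:=\sup_{y\in K}\|y\|_X$. Since condition \eqref{d} forces $d_n\to \infty$, the estimate $\|(y_k/\beta_k^n)_{k\geq 1}\|_X \leq M/d_n$ for $y\in K$, together with $\|(x_{n+k})_{k\geq 1}\|_X \to 0$ (as $x\in X$), allows me to choose $n$ large enough so that for every $y\in K$ the contribution $\|(z_k - x_k)_{k>n}\|_X$ to $\|z-x\|_X$ is bounded by $\epsilon/2$, uniformly in $y$.

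\smallskip
\noindent
With $n$ fixed, for each $y\in K$ the remaining requirement $\|(z_k - x_k)_{k\leq n}\|_X < \epsilon/2$ reduces to a finite system $|r_{n-j+1} - \beta_1^{j-1} x_j|<\eta_j$ for $j=1,\ldots,n$, with some $\eta_j>0$ determined by $\epsilon$. Each such constraint defines an open interval around $\beta_1^{j-1}x_j$, and since the standing assumption $\hbox{supp}(m)=\mathbb{R}$ forces $\beta_1^{j-1}x_j\in\hbox{supp}(m)$ for every $j$, each interval has positive $m$-measure; thus the admissible set of $(r_1,\ldots,r_n)$ has positive $m^n$-measure. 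Combined with the positivity of $e^{S_n\overline{A}}$, which follows from boundedness of $\overline{A}$ (using $\psi_A$ bounded above and $\inf\psi_A>0$ from part (a)), this gives $\mathcal{L}^n_{\overline{A}}(\mathbf{1}_{B(x,\epsilon)})(y)>0$ for all $y\in K$. Since the integrand is nonnegative and strictly positive on a set of positive $\mu_A$-measure, the integral is strictly positive, so $\mu_A(B(x,\epsilon))>0$ and hence $x\in\hbox{supp}(\mu_A)$. The concluding equality $\hbox{supp}(\mu_A)=X$ then follows immediately from $\hbox{supp}(m)=\mathbb{R}$, which yields $\pi_1^{-1}(\hbox{supp}(m))=X$.

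\smallskip
\noindent
The main obstacle I expect is the uniform tail bound $\|(y_k/\beta_k^n)_{k\geq 1}\|_X < \epsilon/4$ on a set of large $\mu_A$-measure: this is the step where both the summability in \eqref{d} (to ensure $d_n\to\infty$) and the tightness of $\mu_A$ (to produce a bounded $K$) are essential. Everything else is bookkeeping against the explicit preimage structure of the weighted shift combined with the positivity of $m$ on arbitrary nonempty open intervals.
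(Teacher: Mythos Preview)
Your argument for part (d) is correct and takes a genuinely different route from the paper. The paper attempts a \emph{single} application of the dual operator: it fixes the one-dimensional slice $U(x;\epsilon)=\{y:|y_1-x_1|\le\epsilon,\ y_k=x_k\ \forall k>1\}\subset U$, sandwiches a Urysohn function $\chi_{U(x;\epsilon)}\le\varphi\le\chi_U$, and after one iteration of $\mathcal{L}_{\overline A}^*$ bounds $\mu_A(U)$ from below by $e^{\inf\overline A}\,m([x_1-\epsilon,x_1+\epsilon])$. As written that last step is delicate, since $\chi_{U(x;\epsilon)}(r,y_1/\alpha_1,y_2/\alpha_2,\dots)=1$ forces $y_k=\alpha_k x_{k+1}$ for every $k\ge1$, so the inner integrand is supported on a single point in $y$. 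Your approach sidesteps this by iterating $n$ times: tightness of $\mu_A$ produces a bounded set $K$ of positive measure, condition \eqref{d} drives the tail $(y_k/\beta_k^n)_k$ uniformly to zero on $K$, and the first $n$ coordinates of the preimage---which do not depend on $y$ at all---are then matched to $(x_1,\dots,x_n)$ using the full support of $m$. What your version buys is a clean separation of the two obstructions (tail control via expansion, head control via $\hbox{supp}(m)=\mathbb R$) and a rigorous lower bound; what the paper's one-step version would buy, were its final estimate tightened, is brevity.

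One small remark: your argument invokes $\hbox{supp}(m)=\mathbb R$ for \emph{all} $n$ coordinates, not just the first, so strictly speaking it proves $\hbox{supp}(\mu_A)=X$ directly rather than via the intermediate containment $\hbox{supp}(\mu_A)\supset\pi_1^{-1}(\hbox{supp}(m))$. Under the paper's standing assumption these statements coincide, so nothing is lost.
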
 

Before giving the proof, we remark that item c) of Theorem \ref{RPF} in combination with the normalization of Remark \ref{remark:normalization} allows us to extend Definition \ref{Ro} to potentials which are not necessarily normalized. That is, for $A \in  \mathcal{SV}(X)$ and $m$ with adapted tails, we refer to $\mu_A$ as in c) as the Gibbs measure for the potential $A$. 
Moreover, we would like to remark that item b) only requires that $A$ is a Hölder contiunous and normalized potential which is not necessatily of bounded distortion (see \eqref{summable-variation-weighted}).

\begin{proof}  We postpone the discussion of the existence of $\mu_A$ in item b) to Theorem  \ref{theo:conformal-measure} in  the Appendix (Section \ref{ap}), but the remaining statements of item b) are proven below.
For the remaining parts, we adapt   the results that appear in \cite{BCLMS} and \cite{LMMS}.

\smallskip
\noindent \textit{Part a)} 
For each $s \in (0, 1)$ define the operator $\mathcal{T}_{s, A} : \mathcal{C}_b(X) \to \mathcal{C}_b(X)$ by
\begin{equation}
\mathcal{T}_{s, A}(u)(x) = \log\left( \int_{Lv = x} e^{A(v) + su(v)} dm(v) \right) \;.
\label{discount}
\end{equation}

The use of this kind of operator is quite common in the so called discounted method (see Section $1$ in \cite{BCLMS}). In order to show existence of the eigenfunction  $\psi_A$ for the Ruelle operator $\mathcal{L}_A$  it will be required later to take $s \to 1$.

For fixed $s$, and using the  coordinates notation obtained from the Schauder basis $\{e_k\}_{k \geq 1}$, it follows that the equation \eqref{discount} can be written as
\begin{equation}
\mathcal{T}_{s, A}(u)(x_1, x_2, ...) = \log\left( \int_{\mathbb{R}} e^{A\bigl(r, \frac{x_1}{\alpha_1}, \frac{x_2}{\alpha_2}, ...\bigr) + su\bigl(r, \frac{x_1}{\alpha_1}, \frac{x_2}{\alpha_2}, ...\bigr)} d  m(r) \right) \;.
\label{discount2}
\end{equation}

Since the maps $A$ and $u$ are bounded continuous functions and $m$ is a probability measure, we deduce that $\mathcal{T}_{s, A}(u) \in \mathcal{C}_b(X)$ for all $u \in \mathcal{C}_b(X)$.

Now, observe that for all $x, y \in X$,
\begin{align}
|\mathcal{T}_{s, A}(u_1)(x) - \mathcal{T}_{s, A}(u_2)(x)|
&= \Bigl| \log\left( \frac{\int_{Lv = x} e^{A(v) + su_1(v)} dm(v)}{\int_{Lv = x} e^{A(v) + su_2(v)} dm(v)}\right)\Bigr| \nonumber \\
&\leq \log\left( \frac{\int_{Lv = x} e^{A(v) + su_2(v)+ s\|u_1 - u_2\|_{\infty}} dm(v)}{\int_{Lv = x} e^{A(v) + su_2(v)} d m(v)}\right) \nonumber \\
&= s\|u_1 - u_2\|_{\infty} \nonumber \;.
\end{align}

 This shows that for all  $0<s<1$, the operator  $\mathcal{T}_{s, A}$ is a uniform contraction. Therefore, since $\mathcal{C}_b(X)$ is a complete metric space with the distance induced by the uniform norm $\|\cdot\|_{\infty}$,  it follows from the Banach contraction principle that $\mathcal{T}_{s, A}$ has a unique fixed point $u_s \in \mathcal{C}_b(X)$. That is, for all $x$ and $0<s<1$,
\[
e^{u_s(x)} = \int_{Lv = x} e^{A(v) + su_s(v)} dm(v) \;.
\]

Now, we will show that the collection $(u^*_s)_{0 < s < 1}$, with
$$u^*_s(x) = u_s(x) - u_s(0),$$
for each $x \in X$, is equicontinuous and uniformly bounded.

By the Arzel\'a-Ascoli  Theorem, as a consequence of the Cantor-Tychonoff Theorem, we claim the existence of a sequence $(s_n)_{n \geq 1}$ satisfying
\begin{enumerate}[i)]
\item $\displaystyle \lim_{n \to \infty}s_n = 1$;
\item $\displaystyle \lim_{n \to \infty}u^*_{s_n}(x) = u(x)$ for each $x \in X$;
\item $\displaystyle \lim_{n \to \infty}u^*_{s_n} = u$ as uniform limit on any compact subset of $X$.
\end{enumerate}
We will now show that the function   $e^{u} = \psi_A$ is the main eigenfunction for $\mathcal{L}_A$. That is, $\psi_A$ satisfies $a)$, with $\lambda_A$ maximal on the set of eigenvalues of $\mathcal{L}_A$.

 In order to prove that  $(u^*_s)_{0 < s < 1}$ is equicontinuous,  for any pair $x, y \in X$, set
 \begin{align*}
 \mathcal{S}_1(x, y)
& := \sup_{r \in \mathbb{R}}\Bigl\{ A \bigl(r, \frac{x_1}{\alpha_1}, \frac{x_2}{\alpha_2},  ...\bigr) - A\bigl(r, \frac{y_1}{\alpha_1}, \frac{y_2}{\alpha_2}, ...\bigr) \\
& \phantom{:= \sup_{r \in \mathbb{R}}\Bigl\{} + s\bigl(u_s\bigl(r, \frac{x_1}{\alpha_1}, \frac{x_2}{\alpha_2}, ... \bigr) - u_s\bigl(r, \frac{y_1}{\alpha_1}, \frac{y_2}{\alpha_2},... \bigr)\;\bigr) \Bigr\} 
 \end{align*}
and note that $\sup_{x,y} \mathcal{S}_1(x, y) < \infty$ as $A$ and $u_s$ are bounded.
%
Using the expression in coordinates for the operator $\mathcal{T}_{s, A}$ (which appears in \eqref{discount2}) and the fact that $u_s$ is a fixed point for $\mathcal{T}_{s, A}$, it follows that
\begin{align}
e^{u_s(x)}
&= \int_{\mathbb{R}} e^{A\bigl(r, \frac{x_1}{\alpha_1}, \frac{x_2}{\alpha_2}, ...\bigr) + su_s\bigl(r, \frac{x_1}{\alpha_1}, \frac{x_2}{\alpha_2}, ...\bigr)} dm(r) \nonumber \\
&\leq  e^{\mathcal{S}_1(x, y)} \int_{\mathbb{R}} e^{A\bigl(r, \frac{y_1}{\alpha_1}, \frac{y_2}{\alpha_2}, ...\bigr) + su_s\bigl(r, \frac{y_1}{\alpha_1}, \frac{y_2}{\alpha_2}, ...\bigr)} dm(r)  \nonumber \\
&= e^{\mathcal{S}_1(x, y)} e^{u_s(y)}\nonumber \;.
\end{align}
That is, 
$$|u_s(x) - u_s(y)| \leq \mathcal{S}_1(x, y) < +\infty\;.$$
Therefore, following an inductive argument on the first $n$ coordinates of the points in which it is calculated the function $A + su_s$, it is easy to check that for any $n \in \mathbb{N}$,   the inequality 
\begin{equation}
\label{n-th-variation-sum}
|u_s(x) - u_s(y)| \leq \mathcal{S}_n(x, y) < +\infty\;,
\end{equation} 
is satisfied, where
\begin{align}
 \mathcal{S}_n(x, y) & : =  
\sup_{(r_1, ..., r_n)} \Bigl\{\Bigl(\sum_{j = 1}^n s^{j-1} \Bigl(A\bigl(r_j, \frac{r_{j-1}}{\beta^1_1}, ... , \frac{r_1}{\beta^{j-1}_1}, \frac{x_1}{\beta^j_1}, \frac{x_2}{\beta^j_2}, ...\bigr) \nonumber \\
& \phantom{: =  \sup_{(r_1, ..., r_n)} \Bigl\{\Bigl(\sum_{j = 1}^n s^{j-1} } 
- A\bigl(r_j, \frac{r_{j-1}}{\beta^1_1}, ... , \frac{r_1}{\beta^{j-1}_1}, \frac{y_1}{\beta^j_1}, \frac{y_2}{\beta^j_2}, ...\bigr)\Bigr)\Bigr) \nonumber \\
& \phantom{: =  \sup_{(r_1, ..., r_n)} \Bigl\{ }
+ s^n\Bigl(u_s \bigl(r_n, \frac{r_{n-1}}{\beta^1_1}, ... , \frac{r_1}{\beta^{n-1}_1}, \frac{x_1}{\beta^n_1}, \frac{x_2}{\beta^n_2}, ...\bigr) \nonumber \\
& \phantom{: =  \sup_{(r_1, ..., r_n)} \Bigl\{  + s^n\Bigl( }
- u_s\bigl(r_n, \frac{r_{n-1}}{\beta^1_1}, ... , \frac{r_1}{\beta^{n-1}_1}, \frac{y_1}{\beta^n_1}, \frac{y_2}{\beta^n_2}, ...\bigr)\Bigr) \Bigr\} \nonumber \;.
\end{align}
Moreover, for any pair $x, y \in X$, the sequence $(\mathcal{S}_n(x, y))_{n \geq 1}$ satisfies
\begin{align}
&\mathcal{S}_n(x, y) \nonumber \\
&\leq \sup_{(r_1, ..., r_n)} \Bigl\{\Bigl(\sum_{j = 1}^n s^{j-1}\mathrm{Lip}_{A, D_X^{\alpha}} \Bigl\|\Bigl(r_j, \frac{r_{j-1}}{\beta^1_1}, ... , \frac{r_1}{\beta^{j-1}_1}, \frac{x_1}{\beta^j_1}, \frac{x_2}{\beta^j_2}, ...\Bigr) \nonumber \\
&\phantom{\leq} \; - \Bigl(r_j, \frac{r_{j-1}}{\beta^1_1}, ... , \frac{r_1}{\beta^{j-1}_1}, \frac{y_1}{\beta^j_1}, \frac{y_2}{\beta^j_2}, ...\Bigr) \Bigr\|_X^{\alpha}\Bigr)  + 2s^n \|u_s\|_{\infty} \Bigr\} \nonumber \\
&= \Bigl(\sum_{j = 1}^n s^{j-1}\mathrm{Lip}_{A, D_X^{\alpha}} \Bigl\|\Bigl(\frac{x_1}{\beta^j_1}, \frac{x_2}{\beta^j_2}, ...\Bigr) - \Bigl(\frac{y_1}{\beta^j_1}, \frac{y_2}{\beta^j_2}, ...\Bigr) \Bigr\|_X^{\alpha}\Bigr) + 2s^n \|u_s\|_{\infty} \nonumber \\
&\leq \Bigl(\sum_{j = 1}^n s^{j-1}\mathrm{Lip}_{A, D_X^{\alpha}} (d_{j})^{-\alpha} \|x - y\|_X^{\alpha}\Bigr) + 2s^n \|u_s\|_{\infty} \nonumber \\
&\leq \Bigl(\sum_{j = 1}^n \mathrm{Lip}_{A, D_X^{\alpha}} (d_{j})^{-\alpha} \|x - y\|_X^{\alpha}\Bigr) + 2s^n \|u_s\|_{\infty} \nonumber \;.
\end{align}
Therefore, taking the limit when $n \to \infty$, it follows that for all $s \in (0, 1)$ and each pair $x, y \in X$,
\begin{equation}
\label{Holder-fixed-point}
|u_s(x) - u_s(y)| \leq d\;\,\mathrm{Lip}_{A, D_X^{\alpha}} \|x - y\|^{\alpha}_X \;,
\end{equation}
where $d$ is as in \eqref{d}.
By the above, it follows that the family $(u_s)_{0<s<1}$ is equicontinuous. Moreover, this implies immediately that $(u^*_s)_{0<s<1}$  (such as defined above)  is equicontinuous as well.

Besides that,
\begin{align}
\mathcal{S}_n(x, y)
&\leq \sup_{(r_1, ..., r_n)} \Bigl\{\Bigl(\sum_{j = 1}^n s^{j-1}V_j(A) + 2s^n \|u_s\|_{\infty} \Bigr\} \nonumber \\
&\leq \sum_{j = 1}^n V_j(A) + 2s^n \|u_s\|_{\infty}\nonumber \;.
\end{align}

Thus, taking the limit as $n \to \infty$, it follows from \eqref{summable-variation-weighted} and \eqref{n-th-variation-sum} that for any $s \in (0, 1)$  is satisfied
\[
|u_s(x) - u_s(y)| \leq V(A) \;.
\]

Since, we have $u^*_s(0) = 0$, for all $s \in (0, 1)$, then, for any $x \in X$ is satisfied
\begin{equation}
|u^*_s(x)| \leq |u_s(x) - u_s(0)| \leq V(A) \;,
\label{uniform-bound}
\end{equation}
that is, the family $(u^*_s)_{0<s<1}$ is uniformly bounded.

Note that for all $x$ and $s$ we get
$$
-\|A\|_{\infty} + s \min u_s \leq u_s(x) \leq \|A\|_{\infty} + s \max u_s \;.
$$

From this, it follows:
\[
-\|A\|_{\infty} \leq (1-s) \min u_s\leq (1-s) \max u_s \leq \|A\|_{\infty} \;,
\]
for any $0<s<1$.

The family $(1 - s)\; u_{s}(0)$, $0<s<1$, is bounded and determines a convergent sequence $s_n$, such that, $\kappa := \displaystyle \lim_{n \to \infty}(1 - s_n)\;\,u_{s_n}(0)$.

Considering the sequences $u_{s_n}$ and $u^*_{s_n}= u_{s_n}- u_{s_n}(0)$, we use Arzel\'a-Ascoli's Theorem in order to get another subsequence (of the given  sequence $u_{s_n}^*$) which  converges uniformly on compact subsets of $X$ and pointwise for all $x \in X$. We also denote this new subsequence of index by $s_n$, $n \in \mathbb{N}$.

Let $u$ be the function satisfying the uniform limit $\displaystyle \lim_{n \to \infty} u^*_{s_n} = u$ on compact subsets of $X$ and the pointwise limit $\displaystyle \lim_{n \to \infty}u^*_{s_n}(x) = u(x)$, for all $x \in X$. Then, for each $x \in X$ the value $u$ satisfies the equation
\begin{equation}
e^{u(x)} = e^{-\kappa}\int_{Lv = x}e^{A(v) + u(v)}dm(v) \;,
\label{eigenfunction}
\end{equation}
with $\kappa = \displaystyle \lim_{n \to \infty}(1 - s_n)u_{s_n}(0)$.

Indeed, since $\mathcal{T}_{s_n, A}(u_{s_n}) = u_{s_n}$, we have
\begin{equation}
\label{fixed-point}
e^{u^*_{s_n}(x)} = e^{-(1 - s_n)u_{s_n}(0)}\int_{Lv = x}e^{A(v) + s_n u^*_{s_n}(v)}dm(v) \;.
\end{equation}

It follows that
$$
\int_{Lv = x}e^{A(v) + s_nu^*_{s_n}(v)}dm(v)
= \int_{\mathbb{R}}e^{A\bigl(r, \frac{x_1}{\alpha_1}, \frac{x_2}{\alpha_2}, ...\bigr) + s_nu^*_{s_n}\bigl(r, \frac{x_1}{\alpha_1}, \frac{x_2}{\alpha_2}, ...\bigr)}dm(r), \nonumber
$$
and by \eqref{uniform-bound} we have that
$$
\int_{\mathbb{R}} \Bigl| e^{A\bigl(r, \frac{x_1}{\alpha_1}, \frac{x_2}{\alpha_2}, ...\bigr) + s_nu^*_{s_n}\bigl(r, \frac{x_1}{\alpha_1}, \frac{x_2}{\alpha_2}, ...\bigr)} \Bigr| dm(r)
 \leq  C \int_{\mathbb{R}} dm(r) < \infty \;,
$$
where $C = e^{\|A\|_{\infty} + V(A)}$. Therefore, \eqref{eigenfunction} is a consequence of  \eqref{fixed-point} and the  Dominated Convergence Theorem.

The foregoing implies that the positive function $\psi_A(x) = e^{u(x)}$, satisfies the equation $\mathcal{L}_A(\psi_A)(x) = \lambda_A\psi_A(x)$ for all $x \in X$, with $\lambda_A = e^{-\kappa}$. Furthermore, by \eqref{Holder-fixed-point}, it follows  for each $x, y \in X$ that
\begin{equation}
\label{Holder-u}
|u(x) - u(y)| = \lim_{n \to \infty}|u^*_{s_n}(x) - u^*_{s_n}(y)| \leq d\,\mathrm{Lip}_{A, D_X^{\alpha}}\|x - y\|^{\alpha}_X \;,
\end{equation}
and by \eqref{uniform-bound}, we obtain that, for any $x \in X$,
\begin{equation}
\label{uniform-bound-u}
|u(x)| = \lim_{n \to \infty}|u^*_{s_n}(x)| \leq V(A) \;,
\end{equation}
By \eqref{Holder-u} and \eqref{uniform-bound-u}, it follows that $u \in \mathrm{Lip}_b(D_X^{\alpha}, X)$, and thus $\psi_A = e^u \in \mathrm{Lip}_b(D_X^{\alpha}, X)$.
In order to prove that $\lambda_A$ is the maximal eigenvalue, we show that $\lambda_A$ coincides with the spectral radius $\sigma$ of the action of $\mathcal{L}_{A} $ on $\mathrm{Lip}_b$. In order to do so, we consider 
\[ R(\varphi) :=  \limsup_{n \to \infty} \left(\|\mathcal{L}_{A}^n (\varphi)\|_\infty + \mathrm{Lip}_{\mathcal{L}^n_A(\varphi), D_X^{\alpha}}  \right)^{1/n}, 
\]
for $\varphi \in \mathrm{Lip}_b(D_X^{\alpha}, X)$. Observe that 
$R(\psi_A) = \lambda_A$ and 
that $ R(\varphi) + t \psi_A) = \max\{ R(\varphi), R(\psi_A)\}$ for any $t > 0$, which gives rise to the following argument. 
Assume that $\sigma > \lambda_A$.
It then follows from the spectral radius formula that  there exists $\varphi \in \mathrm{Lip}_b(D_X^{\alpha}, X)$ with $R(\varphi) > \lambda_A$. As $\inf_x \psi_A(x) > 0$, the above implies that there is $t> 0$ and $c> 0$ such that $\varphi_t:= \varphi + t\psi_A$ satisfies  
$\psi_A/c \leq \varphi_t \leq c\psi_A $ and $R(\varphi_t) = R(\varphi) > \lambda_A$. Hence, for all $n \in \mathbb{N}$,
\[ 0 < \inf_x \psi_A(x) /c \leq \psi_A / c  \leq \mathcal{L}^n_A(\varphi_t) / \lambda_A^n \leq c \psi_A \leq c \sup_x \psi_A(x) < \infty .  \]
As the same argument shows that $\mathcal{L}^n_A(1) / \lambda_A^n$
is uniformly bounded from above and below, an application of item b) of Lemma \ref{lemma-Holder} shows that the local Hölder coefficients of $\mathcal{L}^n_A(\varphi_t) / \lambda_A^n$ are uniformly bounded from above. Hence, $\mathcal{L}^n_A(\varphi_t) / \lambda_A^n$ is a  
Hölder contiunous function with unifomly bounded Lipschitz coefficients (see Lemma \ref{pio}). This implies that 
\[ \lambda_A < R(\varphi_t) = \limsup_{n \to \infty} \left(\|\mathcal{L}_{A}^n (\varphi_t)\|_\infty \right)^{1/n} \leq \limsup_{n \to \infty} \left(\|c \mathcal{L}_{A}^n (\psi_A)\|_\infty \right)^{1/n}  = \lambda_A, \]
which is absurd. Hence, $\lambda_A = \sigma$. 

It remains to show that $\psi_A$ is the unique, strictly positive eigenfunction. In order to do so, one uses $\psi_A$ in order to define the normalization $\overline{A}$ of $A$ as given in Remark \ref{remark:normalization}. The mixing property of item b) then implies that the unique positive eigenfunctions of $\mathcal{L}_{A}^n$ in $\mathrm{Lip}_b(D_X^{\alpha}, X)$ are the constant functions. This concludes the proof of a).

\medskip

\noindent \textit{Part b)}
The proof that there exist a $\sigma$-additive probability $\mu_A$ on $X$ which is fixed for the  action of the operator $\mathcal{L}_A^*$ and that $(\mathcal{L}_A^n)^\ast(\nu) \to \mu_A$ for any $\sigma$-additive probability measure $\nu$  
will be proven in the Appendix (see Section \ref{ap}).
Moreover, note that, for any  $\varphi \in \mathcal{C}_b(X)$, we have
\[
\int_{X}(\varphi \circ L) d\mu_A = \int_{X}(\varphi \circ L) d\left(\mathcal{L}^*_{{A}}(\mu_A)\right) = \int_{X}\mathcal{L}_{{A}}(\varphi \circ L) d\mu_A = \int_{X}\varphi d\mu_A \;.
\]
Hence, $\mu_A$ is $L$-invariant and, as a consequence of Poincar\'e's Recurrence Theorem, it is conservative.  We now show that $\mu_A$ is mixing. As $(\mathcal{L}_A^n)^\ast(\nu) \to \mu_A$ for any $\nu$ in the weak-$\ast$ topology, it follows that, for any $x\in X$ and any bounded continuous function $f: X \to \mathbb{R}$, that  
\[  \mathcal{L}_A^n(f)(x) = \int_X f d (\mathcal{L}_A^n)^\ast(\delta_x)  \xrightarrow{n \to \infty} \int_X f d\mu_A.  \] 
Hence, by Dominated Convergence Theorem, for any  pair of bounded continuous functions $f,g: X \to \mathbb{R}$,
\begin{align*}
\lim_{n\to \infty}  \int_X f\;  g\circ L^n d\mu_A & = \lim_{n\to \infty} \int_X f\;  (g\circ L^n ) d (\mathcal{L}_A^n)^\ast(\mu_A)  
= \lim_{n\to \infty} \int_X \mathcal{L}_A^n(f g \circ L^n)  d\mu_A 
\\
& =\lim_{n\to \infty} \int_X \mathcal{L}_A^n(f) g d\mu_A   = \int_X f d\mu_A \; \int g d\mu_A.  
\end{align*}
This implies that  $\mu_A$ is mixing showing that  all claims of item b) are true.

\smallskip
\noindent \textit{Part c)} This an immediate consequence of parts a) and b), as any potential $A \in \mathrm{Lip}_b(D^\alpha_X, X) \cap \mathcal{SV}(X)$ satisfies $\mathcal{L}_{\overline{A}}(1) = 1$. Indeed, by part a), for each $x \in X$ we have
\begin{align*}
\mathcal{L}_{\overline{A}}(1)(x) 
&= \int_{\mathbb{R}} e^{\overline{A}\bigl(r, \frac{x_1}{\alpha_1}, \frac{x_2}{\alpha_2}, ...\bigr)} d m(r) \\
&= \frac{1}{\lambda_A \psi_A(x)} \int_{\mathbb{R}} e^{A\bigl(r, \frac{x_1}{\alpha_1}, \frac{x_2}{\alpha_2}, ...\bigr)}\psi_A\bigl(r, \frac{x_1}{\alpha_1}, \frac{x_2}{\alpha_2}, ...\bigr) d m(r) \\
&= \frac{\mathcal{L}_A(\psi_A)(x)}{\lambda_A \psi_A(x)} \\
&= 1 \;.
\end{align*}

\medskip

\noindent \textit{Part d)}
Consider $U(x; \epsilon) = \{y \in X ,\; |y_1 - x_1| \leq \epsilon\,, \text{ and } \,x_n = y_n \,; \forall n > 1 \}$. Let $U$ be a neighborhood of $x$, then, there is $\epsilon > 0$ such that $U(x, \epsilon) \subset U$. By Urysohn's Lemma, let $\varphi \in \mathcal{C}_b(X)$ such that $\chi_{U(x; \epsilon)} \leq \varphi \leq U$. Then,
\begin{align*}
\mu_A(U)
&\geq \int_{X} \varphi d\mu_A \\
&= \int_{X}\mathcal{L}_{{A}}(\varphi)d\mu_A \\
&= \int_{X} \Bigl(\int_{\mathbb{R}}e^{{A}\bigl(r, \frac{y_1}{\alpha_1}, \frac{y_2}{\alpha_2}, ...\bigr)}\varphi\bigl(r, \frac{y_1}{\alpha_1}, \frac{y_2}{\alpha_2}, ...\bigr)dm(r)\Bigr)d\mu_A(y) \\
&\geq e^{\inf({A})}\int_{X}\Bigl(\int_{\mathbb{R}}\chi_{U(x; \epsilon)}\bigl(r, \frac{y_1}{\alpha_1}, \frac{y_2}{\alpha_2}, ...\bigr)dm(r)\Bigr)d\mu_A(y) \\
&\geq e^{\inf({A})}m([x_1 - \epsilon, x_1 + \epsilon]) > 0 \;.
\end{align*} 
The above implies that $\mu_A(U) > 0$, for any open neighborhood $U \subset X$ of $x$. Furthermore, since $\mu_A(U) \geq e^{\inf({A})}m([x_1 - \epsilon, x_1 + \epsilon])$, we also have that the support of $\mu_A$ contains the inverse image  of the support of the a priori measure $m$ by the map $\pi_1$. This ends the proof. 
\end{proof}

From Theorem \ref{RPF} and Remark \ref{pq} we can deduce the following result.

\begin{corollary}

Let $1 \leq p  <q$ be real numbers  and $X = l^q(\mathbb{R})$. Assume that $A \in \mathrm{Lip}_b(D_X^{\alpha}, X)$ and $L : l^p(\mathbb{R}) \to l^p(\mathbb{R})$ is a weighted shift satisfying \eqref{d}. Then, Theorem \ref{RPF} holds.
\end{corollary}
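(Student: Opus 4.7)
The plan is to verify directly that the hypotheses of Theorem \ref{RPF} hold for the natural extension of $L$ to the Banach space $X = l^q(\mathbb{R})$. Since the weights $(\alpha_n)_{n \geq 1}$ are uniformly bounded by $c'$, the formula $L((x_n)) = (\alpha_n x_{n+1})$ defines a bounded linear operator $\widetilde{L}: l^q(\mathbb{R}) \to l^q(\mathbb{R})$ of operator norm at most $c'$ that agrees with $L$ on the subspace $l^p(\mathbb{R}) \subset l^q(\mathbb{R})$. In particular, $\widetilde{L}$ is itself a weighted shift on $l^q(\mathbb{R})$ in the sense of Section \ref{preliminaries}, with the same weight sequence $(\alpha_n)$ and the same one-dimensional kernel $\mathrm{span}\{e_1\}$.

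The key observation is that the quantities $\beta_k^n = \alpha_k \cdots \alpha_{k+n-1}$ and $d_n = \inf_{k \geq 1} \beta_k^n$ appearing in the summability condition \eqref{d} depend only on the weight sequence $(\alpha_n)$, not on the ambient Banach space in which the shift acts. Hence condition \eqref{d}, assumed for $L$ on $l^p(\mathbb{R})$, transfers verbatim to $\widetilde{L}$ on $l^q(\mathbb{R})$. Combined with the hypothesis $A \in \mathrm{Lip}_b(D_X^\alpha, X)$ for $X = l^q(\mathbb{R})$, all assumptions of Theorem \ref{RPF} are fulfilled for the pair $(\widetilde{L}, A)$ on $l^q(\mathbb{R})$, and the four conclusions of Theorem \ref{RPF} follow by direct application.

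The only thing to double-check is that the auxiliary structures invoked in Theorem \ref{RPF}, namely the \emph{a priori} probability $m$ and the adapted tails condition of Definition \ref{Def:adapted}, transfer intact to the extended setting. This is immediate: $m$ is a probability on $\mathbb{R} \cong \mathrm{Ker}(\widetilde{L})$, which is unchanged by the extension, and the adapted tails condition depends only on $m$ and the sequence $(\beta_1^n)$, both of which are unaffected. Consequently I expect no substantive obstacle beyond the bookkeeping step of identifying $\widetilde{L}$ with a weighted shift on $l^q(\mathbb{R})$ having the same spectral and kernel data as $L$; the corollary is then simply Theorem \ref{RPF} combined with the continuous embedding $l^p(\mathbb{R}) \hookrightarrow l^q(\mathbb{R})$ and the invariance of condition \eqref{d} under this embedding.
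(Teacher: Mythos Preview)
Your argument is correct: extending the weighted shift to $l^q(\mathbb{R})$ and observing that the quantities $\beta_k^n$ and $d_n$ depend only on the weight sequence $(\alpha_n)$ lets you invoke Theorem~\ref{RPF} directly on $X=l^q(\mathbb{R})$. One small inaccuracy worth fixing: the adapted tails condition (Definition~\ref{Def:adapted}) does depend on $X$, through the requirement $(\kappa_n)\in X$, and not only on $m$ and $(\beta_1^n)$ as you write. This causes no trouble in your setup, since you work on $X=l^q(\mathbb{R})$ throughout and the condition $(\kappa_n)\in l^q(\mathbb{R})$ is even weaker than $(\kappa_n)\in l^p(\mathbb{R})$; but the sentence as written is not quite right.

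The paper's one-line proof goes in the opposite direction. Rather than extending $L$ to the larger space $l^q(\mathbb{R})$, it invokes Remark~\ref{pq} to restrict $A$ to the smaller space $l^p(\mathbb{R})$: since $\|x\|_{l^q}\le\|x\|_{l^p}$ on $l^p(\mathbb{R})$, any $A\in\mathrm{Lip}_b(D_{l^q}^\alpha,l^q(\mathbb{R}))$ restricts to an element of $\mathrm{Lip}_b(D_{l^p}^\alpha,l^p(\mathbb{R}))$, and Theorem~\ref{RPF} is then applied on $l^p(\mathbb{R})$ for the original shift $L$. The two routes produce nominally different outputs: your argument gives the eigenfunction and Gibbs measure on $l^q(\mathbb{R})$ for the extension $\widetilde L$, whereas the paper's gives them on $l^p(\mathbb{R})$ for $L$ itself, which is a priori sharper since the measure is then concentrated on the smaller space. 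Your route, on the other hand, is more self-contained and does not rely on the norm comparison of Remark~\ref{pq}.
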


\subsection {A class of weighted shifts satisfying Theorem \ref{RPF}}

In this section we will consider a different presentation of some results of last section. We have seen that Theorem \ref{RPF} holds for the class of weighted shifts $L$ satisfying \eqref{d}. Using Cauchy's criterion, we deduce that this class contains  all the weighted shifts satisfying
$$
\lim_{n \to \infty} (d_{n})^{-\frac{1}{n}}=  \lim_{n \to \infty}\Bigl( \inf_{k \geq 1} \beta_k^n \Bigr)^{-\frac{1}{n}} < 1 \;.
$$

In fact, we will prove in the next proposition that the last two classes coincide.

\begin{lemma} (see \cite{BeMe})
\label{rgh}
 The following assertions are equivalent:
\begin{enumerate}
\item $\displaystyle \lim_{n \to \infty}(d_n)^{-\frac{1}{n}} < 1$.
\item $\displaystyle \sum_{n =1}^{\infty}  (d_n)^{-1} <\infty$.
\item $\displaystyle \sup_{k \geq 1} \sum_{n =1}^{\infty} (\beta_{k}^{n})^{-1} < \infty$.
\end{enumerate}
\end{lemma}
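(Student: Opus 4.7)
The plan is to establish the cycle (1) $\Rightarrow$ (2) $\Rightarrow$ (3) $\Rightarrow$ (1). The first two implications are essentially immediate: for (1) $\Rightarrow$ (2), if $\lim_n (d_n)^{-1/n} = \rho < 1$, pick $\rho' \in (\rho, 1)$; then $(d_n)^{-1} \leq (\rho')^n$ for all large $n$, and $\sum_n (d_n)^{-1}$ converges by comparison with a geometric series (Cauchy's root test). For (2) $\Rightarrow$ (3), the inequality $\beta_k^n \geq d_n$ gives $(\beta_k^n)^{-1} \leq (d_n)^{-1}$, so $\sum_n (\beta_k^n)^{-1} \leq \sum_n (d_n)^{-1}$ uniformly in $k$.

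The substantial step is (3) $\Rightarrow$ (1), and the key tool is the semigroup identity
\[
\beta_k^{N+n} \;=\; \beta_k^N \cdot \beta_{k+N}^n.
\]
Setting $M := \sup_k \sum_{n \geq 1}(\beta_k^n)^{-1}$ and $F_k(N) := \sum_{n \geq N}(\beta_k^n)^{-1}$, I would first use the identity to factor the tail:
\[
F_k(N) \;=\; (\beta_k^N)^{-1}\Bigl(1 + \sum_{n \geq 1}(\beta_{k+N}^n)^{-1}\Bigr) \;\leq\; (1+M)(\beta_k^N)^{-1}.
\]
Since $(\beta_k^N)^{-1} = F_k(N) - F_k(N+1)$, this rearranges to
\[
F_k(N+1) \;\leq\; \tfrac{M}{M+1}\, F_k(N),
\]
and iterating from $F_k(1) \leq M$ yields
\[
(\beta_k^N)^{-1} \;\leq\; F_k(N) \;\leq\; M\Bigl(\tfrac{M}{M+1}\Bigr)^{N-1}
\]
uniformly in $k$. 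Taking the supremum over $k$ gives $(d_N)^{-1} \leq M(M/(M+1))^{N-1}$, so $\limsup_N (d_N)^{-1/N} \leq M/(M+1) < 1$.

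To conclude (1) as stated (existence of the limit), I would note that $\beta_k^{n+m} \geq d_n d_m$ for every $k$, hence $d_{n+m} \geq d_n d_m$; thus $\log d_n$ is super-additive and by Fekete's lemma $\lim_n (d_n)^{1/n} = \sup_n (d_n)^{1/n}$ exists in $(0,\infty]$. Combined with the geometric upper bound just obtained, $\lim_n (d_n)^{-1/n}$ exists and is strictly less than $1$, giving (1).

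The main obstacle I expect is exactly the step (3) $\Rightarrow$ (1): a merely pointwise-in-$k$ convergence of the tails is not enough to force exponential growth of $d_n = \inf_k \beta_k^n$, since the index achieving the infimum may drift with $n$. The point is that the \emph{uniformity in $k$} in (3), together with the multiplicative factorization $\beta_k^{N+n} = \beta_k^N \beta_{k+N}^n$, self-improves the summability to a uniform geometric decay rate, which is what distinguishes this case from general summable positive sequences.
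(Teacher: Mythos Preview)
Your argument is correct and matches the paper's approach: both derive the same uniform geometric bound $(\beta_k^n)^{-1}\le M\bigl(\tfrac{M}{M+1}\bigr)^{n-1}$ from the multiplicative identity $\beta_k^{N+n}=\beta_k^N\beta_{k+N}^{\,n}$, the paper via the telescoping factorization $\alpha_j^{-1}=S_j/(1+S_{j+1})$ and you via the equivalent tail-recursion $F_k(N+1)\le\tfrac{M}{M+1}F_k(N)$. The only genuine difference is in establishing that $\lim_n (d_n)^{-1/n}$ exists: the paper recognizes $\sup_k(\beta_k^n)^{-1}=\|(L')^n\|_{op}$ for the weighted shift $L'$ with weights $\alpha_n^{-1}$ and invokes the spectral radius formula, whereas your super-multiplicativity/Fekete argument is more elementary and self-contained.
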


\begin{proof}
It is easy to prove  that (i) implies (ii) and  (ii) implies (iii) .
Let us show that (iii) implies (i). For each $k \in \mathbb{N}$, define
$\displaystyle S_k = \sum_{n=1}^{\infty} (\beta_k^{n})^{-1}$. By hypothesis, there is a constant $M > 0$, such that, $S_k \leq M$ for all $k \geq 1$.

Since $\alpha_k^{-1}S_{k+1} = S_k - \alpha_k^{-1}$ for each $k \in \mathbb{N}$, it follows that
\begin{align}
(\beta_k^n)^{-1}
&= \frac{S_k}{1 + S_{k+n}} \cdot \frac{S_{k+1}}{1 + S_{k+1}} \cdot...\cdot \frac{S_{k+n-1}}{1 + S_{k+n-1}} \nonumber \\
&\leq S_k \cdot \frac{S_{k+1}}{1 + S_{k+1}} \cdot...\cdot \frac{S_{k+n-1}}{1 + S_{k+n-1}} \nonumber \\
&\leq M \cdot \Big(\frac{M}{1+M}\Big)^{n-1} \nonumber \;.
\end{align}

This implies that $\displaystyle \limsup_{n \to \infty}(d_n)^{-\frac{1}{n}} \leq \frac{M}{1+M}< 1$.

On the other hand, we have
$$
(d_n)^{-\frac{1}{n}}= \Bigl( \sup_{k \geq 1} (1/ \beta_k^n) \Bigr)^{\frac{1}{n}} \;.
$$

Moreover, by \eqref{spectral-ratio} we have $\displaystyle \lim_{n \to \infty} \Big( \sup_{k \geq 1} (1/ \beta_k^n) \Bigr)^{\frac{1}{n}} = \lim_{n \to \infty}\|(L')^n\|_{op}^{\frac{1}{n}}$, which is the spectral radius $r(L')$ of the weighted shift operator $L'$ defined by
$$
L'((x_n)_{n \geq 1}) = (\alpha_n^{-1} x_{n+1})_{n \geq 1} \;.
$$

Hence $\displaystyle \lim_{n \to \infty}(d_n)^{-\frac{1}{n}} = \limsup_{n \to \infty}(d_n)^{-\frac{1}{n}} < 1$.
\end{proof}

We deduce from Lemma \ref{rgh} that for all  $\alpha > 0,$ the condition $ \sum_{n =1}^{\infty} (d_n)^{-\alpha} < \infty  $ is equivalent to $\displaystyle \lim_{n \to \infty}(d_n)^{-\frac{1}{n}} < 1$.

As a consequence, we obtain the following result.

\begin{proposition}
\label{pr-limsup}
Let $X$ be either $c_0(\mathbb{R})$ or $l^p(\mathbb{R})$, $1 \leq p < \infty$. Assume that $A \in \mathrm{Lip}_b(D_X^{\alpha}, X)$ and $L: X \to X$ is a weighted shift satisfying
 \begin{eqnarray}
 \label{bebeta}
 \displaystyle \lim_{n \to \infty}(d_n)^{-\frac{1}{n}} < 1 \;.
 \end{eqnarray}

Then, the claims of items a), b), c) and d) of Theorem \ref{RPF} are satisfied.
\end{proposition}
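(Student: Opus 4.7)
The plan is to reduce Proposition \ref{pr-limsup} to a direct application of Theorem \ref{RPF} by showing that condition \eqref{bebeta} implies the summability condition \eqref{d}. The remark preceding the proposition already asserts that \eqref{bebeta} and \eqref{d} are equivalent for every $\alpha > 0$, deduced from Lemma \ref{rgh} (which treats the case $\alpha = 1$), so the bulk of the task is to spell out this extension and then invoke Theorem \ref{RPF} verbatim.

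To handle general $\alpha > 0$, I will first verify that $\lim_{n\to\infty}(d_n)^{-1/n} < 1$ implies $\sum_n (d_n)^{-\alpha} < \infty$. Choosing $\tau$ strictly between this limit and $1$, the inequality $(d_n)^{-\alpha} \leq \tau^{\alpha n}$ eventually holds, and summability follows by comparison with a convergent geometric series. The converse is slightly more delicate: I will use that the factorization $\beta_k^{n+m} = \beta_k^n \, \beta_{k+n}^m$ gives the superadditivity $d_{n+m} \geq d_n d_m$, so Fekete's lemma applied to $\log d_n$ ensures that $(d_n)^{1/n}$ converges to $\sup_n (d_n)^{1/n}$ (possibly $+\infty$). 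Since $\sum_n (d_n)^{-\alpha} < \infty$ forces $d_{n_0} > 1$ for some $n_0$, iterating the superadditivity gives $d_{k n_0} \geq d_{n_0}^k$, so $\lim (d_n)^{1/n} \geq (d_{n_0})^{1/n_0} > 1$, which is exactly \eqref{bebeta}.

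With the equivalence in hand, each of the four claims follows from Theorem \ref{RPF} under the corresponding auxiliary hypotheses inherited from its statement (summable variation for a), c), d), and adapted tails for b), c), d)), since \eqref{bebeta} now supplies the missing \eqref{d}. I do not anticipate a serious obstacle; the only step requiring care is the converse direction of the equivalence, where the superadditivity of $d_n$ together with Fekete's lemma are the essential tools to rule out the possibility that $\lim (d_n)^{-1/n} = 1$ while the series still converges.
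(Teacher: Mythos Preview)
Your proposal is correct and follows the paper's approach: reduce to Theorem \ref{RPF} by showing that \eqref{bebeta} implies the summability condition \eqref{d}, which is exactly what the paper does in its one-line proof invoking Lemma \ref{rgh}. For the forward implication (the only direction actually needed for the proposition) your root-test argument is the standard one and matches the paper's reasoning.

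The one place where you diverge is in the proof of the converse implication. The paper's Lemma \ref{rgh} establishes $(ii)\Rightarrow(i)$ by passing through the auxiliary condition $(iii)$ and using the telescoping identity $(\beta_k^n)^{-1} = \frac{S_k}{1+S_{k+n}}\prod_{j=1}^{n-1}\frac{S_{k+j}}{1+S_{k+j}}$ together with a spectral-radius argument to guarantee existence of the limit. You instead observe that $\beta_k^{n+m}=\beta_k^n\,\beta_{k+n}^m$ forces $d_{n+m}\ge d_n d_m$, apply Fekete's lemma to $\log d_n$, and then extract a single index $n_0$ with $d_{n_0}>1$ from the convergence of the series. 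Your route is more elementary and self-contained (it avoids both condition $(iii)$ and the auxiliary operator $L'$), while the paper's route has the advantage of yielding the extra uniform statement $(iii)$ along the way. Either argument extends to general $\alpha>0$ immediately, and since the proposition itself only requires the forward direction, this difference is of expository rather than logical consequence.
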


\begin{proof}
By Lemma \ref{rgh}, $\displaystyle \lim_{n \to \infty}(d_n)^{-\frac{1}{n}} < 1$ is equivalent to \eqref{d}.
\end{proof}

\begin{remark}
Consider $X \in \{c_0(\mathbb{C}), \; l^p(\mathbb{C}),\; 1 \leq p < \infty\}$. Note that the claims of Theorem \ref{RPF} and Proposition \ref{pr-limsup} hold if $A \in \mathrm{Lip}_b(D_X^{\alpha}, X)$ and $L: X \to X$ is a weighted shift satisfying  $\displaystyle \lim_{n \to \infty}\Bigl( \inf_{k \geq 1} \vert \beta_k^n \vert \Bigr)^{-\frac{1}{n}} < 1$.
\end{remark}



\begin{example}
There are many cases which can be considered on the class of weighted shifts operators satisfying \eqref{bebeta}. For instance:
\begin{enumerate}
\item
Take
$$
1 < c <\alpha_n \leq c', \mbox { for all } n \geq n_0 \;,
$$
where $n_0$ is a non-negative integer and $c$ is a fixed real number.
\item
Assume that there exists an increasing sequence of non-negative integers $(k_n)_{n \geq 0}$, such that, $k_0=0$ and $k_i < k_{i+1}$, for all integer $i \geq 0$.
Let $a$ and $ b$ to be two real numbers, such that, $c \leq a <1< b \leq c'$,
and
$$
a \leq \alpha_j <1, \mbox { for all } k_{2i}+1  \leq j \leq  k_{2i+1},\; i \geq 0
$$
and
$$
1< b \leq \alpha_j <c', \mbox { for all } k_{2i+1}+1  \leq j \leq  k_{2i+2},\; i \geq 0 \;.
$$

Set
$$
r_n= k_{n}- k_{n-1} \mbox { for all } n \geq 1 \;.
$$

Assume that there exists a real number $e \geq 1$, such that,
$$
1 \leq r_n \leq e,  \mbox { for all } n \geq 1 \;.
$$

Then,
$$
\beta_k^n= \alpha_{k} ... \alpha_{k+n-1} \geq (a^e b)^{[\frac{n} { e+1}]} a^{e}, \mbox { for all } k \geq 1 \;,
$$
where $[\frac{n} { e+1}]$ is the integer part of $\frac{n} { e+1}$. Assuming that $ a^e b >1$ we are able to present an interesting example.
\end{enumerate}
\end{example}



\section{Extension to a wider class of operators }
\label{extension}

Let $X$ be a Banach space and $T: X \to X$ be a linear continuous operator.
We denote by $m(T)$ the co-norm of $T$; that is, $m(T)= \inf \{\|T(x)\|_X,\; \|x\|_X = 1\}$.

It is known that $m(T)>0$, if and only if, $T$ is one to one and has closed range. Moreover, it is easy to see that for all $x \in X,\; m(T) \|x\|_X \leq \|T(x)\|_X$. Another important property is that
$\displaystyle \lim_{n \to \infty} (m(T^n))^{\frac{1}{n}}$ exists (see \cite{Bo}).

Now assume that $T$ is onto, not necessarily one to one. Suppose also  that there exists   a closed subspace $E$ of $X$, such that,
$X$ is the direct sum of $\mathrm{Ker} (T)$ and $E$, i.e, any vector $x \in X$ can be written in a unique form as $x = z + v$, with $z \in \mathrm{Ker} (T)$ and $v \in E$. We will use the notation
\begin{eqnarray}
\label{sjn}
 X= \mathrm{Ker}(T) \oplus E \;.
 \end{eqnarray}
 We define
$$
p(T) = \inf \{\|T(x)\|_X,\; \|x\|_X = 1,\; x  \in E\} \;.
$$

Note that
\begin{eqnarray}
\label{fg}
p(T) \|x\|_X \leq \|T(x)\|_X, \mbox { for all } x \in E.
\end{eqnarray}


We point out that in case the Banach space we consider in this section is a Hilbert space, then 
the required  assumptions for splittings are automatically true.

\begin{example} We consider the weighted shift $L: X  \to X$, where $X$ is $c_0(\mathbb{R})$, or $l^p(\mathbb{R}), \; 1 \leq p < \infty$,  which was given by
$$
L(x_1, x_2, ...)= (\alpha_1 x_2, \alpha_2 x_3,...) \;.
$$
Then, we have
$$
\mathrm{Ker} (L)= \{(x_i)_{i \geq 1} \in X ,\; x_i= 0 \mbox { for all } i \geq 2\}
$$
and
$$
E= \{(x_i)_{i \geq 1} \in X,\; x_1= 0 \} \;.
$$

It's not difficult to see that
\begin{eqnarray}
\label{lgn}
p(L)= \inf_{k \geq 1} \alpha_k.
\end{eqnarray}

Indeed, for all $n \geq 2$, we have $\|L(e_n)\|_X = \| \alpha_{n-1} e_{n-1} \|_X = \alpha_{n-1}$.
Hence
$$
p(L) \leq \inf_{k \geq 1} \alpha_k \;.
$$
On the other hand,
$$
\|L((x_n)_{n \geq 1} \| \geq (\inf_{k \geq 1} \alpha_k) \; \|(x_{n+1})_{n \geq 1} \|, \mbox { for all } (x_2,x_3 ... ) \in X \;.
$$
Hence $ p(L) \geq \inf_{k \geq 1} \alpha_k$ and we obtain \eqref{lgn}.

Now fixing an integer $n \geq 1$, we get
$ X= \mathrm{Ker}(L^n) \oplus E_n,$
where
$$
\mathrm{Ker} (L^n)= \{(x_i)_{i \geq 1} \in X ,\; x_i= 0 \mbox { for all } i \geq n+1\}
$$
and
$$
E_n= \{(x_i)_{i \geq 1} \in X,\; x_i= 0, \forall \; 1 \leq i \leq n \} \;.
$$

In a similar way as in the case $n=1$, we obtain
$$
\displaystyle p(L^n)= d_n = \inf_{k \geq 1} \beta_k^n \;.
$$

Observe that  $\displaystyle \lim_{n \to \infty} (p(L^n))^{\frac{1}{n}}$ is
the inverse of the constant given in Proposition \ref{pr-limsup}.
\end{example}

The above claim can be stated  in a more general form. For instance, assume that $T: X \to X$ is onto, not one to one and satisfies 
\begin{eqnarray}
\label{sjnf1}
X = \mathrm{Ker}(T^n) \oplus E_n  \mbox { and } T(E_{n+1}) = E_n,\mbox { for all } n \geq 1,
\end{eqnarray} 
where the $E_n$'s are closed subspaces of $X$.
We also assume that $T(E_1)= X.$

Note that the assumption \eqref{sjnf1} is satisfied by the weighted shift $L$ in any space $X \in \{c_{0}(\mathbb{R}),\; l^p(\mathbb{R}),\; 1 \leq p < \infty\}$. Furthermore, \eqref{sjnf1} is also satisfied by any continuous operator $T: X \to X$  onto, but not one to one, where $X$ is a Hilbert space, since in this case, for all integer $n \geq 1$, the set $E_n$ equals the orthogonal space of $\mathrm{Ker} (T^n)$.

Now, suppose that $p(T^n)>0$ for all $n \geq 1$,
then $\displaystyle \lim_{n \to \infty} (p(T^n))^{-\frac{1}{n}}$ exists.

Indeed, we have
$$
p (T^{n+m}) \geq p(T^n) p(T^m), \mbox { for all } n , m \in \mathbb{N} \;.
$$
Hence, the sequence $(-\log(p(T^n)))_{n \geq 1}$ is sub-additive.
Thus, $\displaystyle \lim_{n \to \infty}  -\frac{\log(p(T^n))}{n}$ exists and is equal to $\displaystyle \inf_{n \geq 1} -\frac{\log (p(T^n))}{n}$.

Now, for all $x \in X \setminus \{0\}$, denote
$$
T^{-1}(\{x\}) := \{v \in X,\; T(v)= x\} \;.
$$

Since $T$ is onto, $T^{-1}(\{x\})$ is not empty and
$$
T^{-1}(\{x\})= \{x'\} + \mathrm{Ker}(T)= \{x'+ z,\; z \in \mathrm{Ker} (T)\} \;,
$$
where $x'$ is an arbitrary element of $T^{-1}(\{x\})$.

We now fix an {\it a priori} probability $m$ on $\hbox{Ker}(T)$. We will also denote by $m$ the corresponding {\it a priori} measure on $T^{-1}(\{x\}) = \{x'\} + \mathrm{Ker}(T)$.
Then, this gives rise to a  Ruelle operator associated to the potential $A \in \mathrm{Lip}_b(D_X^{\alpha}, X)$ and the a priori probability $m$, defined as the map assigning to each $\varphi \in \mathcal{C}_b(X))$ the function 
$$
\mathcal{L}_A(\varphi)(x) := \int_{v  \in T^{-1}(\{x\})} e^{A(v)}\varphi(v) dm(v) = \int_{z \in \mathrm{Ker}(T)} e^{A(x'+ z)}\varphi(x'+ z) dm(z) \;.
$$

Some of the results in this section follow the same reasoning as before, and sometimes we just outline the proof. 

\begin{lemma}
Let $X$ be a Banach space and $T: X \to X$ a bounded linear operator, such that, $T$ is onto, not one to one, satisfying \eqref{sjn}, $p(T)>0$ and $A \in \mathrm{Lip}_b(D_X^{\alpha}, X)$. Then, the Ruelle operator $\mathcal{L}_A$ preserves the spaces $\mathrm{Lip}_b(D_X^{\alpha}, X)$ and $\mathrm{Lip}^{loc(\delta)}_b(D_X^{\alpha}, X)$, $\delta > 0$.
\end{lemma}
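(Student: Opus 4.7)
The proof follows the pattern of Lemma \ref{lemma-Holder}, with the coordinate-by-coordinate analysis available for the weighted shift replaced by a single application of the splitting $X = \mathrm{Ker}(T) \oplus E$ together with the co-norm bound $p(T) > 0$.

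The plan is as follows. First, parameterize preimages using the splitting: for each $x \in X$ there is a unique $x' \in E$ with $T(x') = x$ (existence by surjectivity, uniqueness by the direct sum decomposition), and
\[
\mathcal{L}_A(\varphi)(x) = \int_{\mathrm{Ker}(T)} e^{A(x' + z)} \varphi(x' + z) \, dm(z).
\]
The key geometric step is to control how $x'$ depends on $x$: since $x' - y' \in E$ and $T(x' - y') = x - y$, the estimate \eqref{fg} applied on $E$ yields
\[
\|x' - y'\|_X \leq p(T)^{-1} \|x - y\|_X,
\]
and hence $\|(x' + z) - (y' + z)\|_X \leq p(T)^{-1} \|x - y\|_X$ for every $z \in \mathrm{Ker}(T)$. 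This is the analogue of the bound $(d_j)^{-1} D_X(x, y)$ used in the weighted shift case.

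Next, I would split $|\mathcal{L}_A(\varphi)(x) - \mathcal{L}_A(\varphi)(y)|$ into the two usual pieces,
\begin{align*}
& \int_{\mathrm{Ker}(T)} e^{A(x'+z)} \, |\varphi(x'+z) - \varphi(y'+z)| \, dm(z) \\
& \quad + \|\varphi\|_\infty \int_{\mathrm{Ker}(T)} \bigl|e^{A(x'+z)} - e^{A(y'+z)}\bigr| \, dm(z).
\end{align*}
The first integral is bounded by $e^{\|A\|_\infty} \mathrm{Lip}_{\varphi, D_X^\alpha} p(T)^{-\alpha} \|x-y\|_X^\alpha$. For the second, use $|e^a - e^b| \leq e^{\max(a,b)} |a-b|$ together with the Hölder bound $|A(x'+z) - A(y'+z)| \leq \mathrm{Lip}_{A, D_X^\alpha} p(T)^{-\alpha} \|x-y\|_X^\alpha$, or alternatively the sharper exponential estimate $e^{|A(x'+z) - A(y'+z)|} - 1$ as in the proof of Lemma \ref{lemma-Holder}. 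Combined with the trivial bound $\|\mathcal{L}_A(\varphi)\|_\infty \leq e^{\|A\|_\infty} \|\varphi\|_\infty$ (and continuity of $\mathcal{L}_A(\varphi)$, which follows from the dominated convergence theorem), this shows $\mathcal{L}_A$ preserves $\mathrm{Lip}_b(D_X^\alpha, X)$.

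For the local version $\mathrm{Lip}^{loc(\delta)}_b(D_X^\alpha, X)$, the same estimate applies whenever $\|x-y\|_X < \delta$, provided one uses the local Hölder constant of $\varphi$ on pairs separated by at most $p(T)^{-1}\delta$. The only subtlety is this possible rescaling of the locality scale by $p(T)$, which is harmless because $\delta > 0$ is arbitrary. I do not expect a genuine obstacle here: the whole argument is a one-step version of the inductive coordinate argument in Lemma \ref{lemma-Holder}, with the expansion rate $d_n$ replaced by $p(T^n)$ (and in this lemma, by just $p(T)$ since only a single iterate is considered). If one were to iterate, the natural quantity $\sum_n p(T^n)^{-\alpha}$ would play the role of $D_n$.
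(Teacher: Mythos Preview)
Your proposal is correct and follows essentially the same approach as the paper's proof: parameterize preimages via the splitting $X = \mathrm{Ker}(T) \oplus E$, use \eqref{fg} to get $\|x'-y'\|_X \leq p(T)^{-1}\|x-y\|_X$, and split the difference $|\mathcal{L}_A(\varphi)(x) - \mathcal{L}_A(\varphi)(y)|$ into the same two pieces. The only cosmetic difference is that the paper bounds the second term using the H\"older constant $\mathrm{Lip}_{e^A, D_X^\alpha}$ of $e^A$ directly (which exists since $A$ is bounded H\"older), whereas you write out the mean-value estimate $|e^a - e^b| \leq e^{\max(a,b)}|a-b|$; these are equivalent. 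Your explicit insistence that $x' \in E$ (so that $x'-y' \in E$ and \eqref{fg} applies) is in fact cleaner than the paper, which leaves this implicit.
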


\begin{remark}
It's classical result (and not difficult to prove) that  $p(T)>0$ is equivalent to the fact that $T: E \to X$ is injective and  $T(E)$ is a closed subspace of $X$.
\end{remark}

\begin{proof}
For any $\varphi \in \mathrm{Lip}_b(D_X^{\alpha}, X)$ and each $x, y \in X$, we have

\begin{align}
&\left| \mathcal{L}_A(\varphi)(x) - \mathcal{L}_A(\varphi)(y)\right| \nonumber \\
&= \left| \int_{z \in \mathrm{Ker}(T)} e^{A(x'+ z)}\varphi(x'+ z) dm(z) - \int_{z \in \mathrm{Ker}(T)} e^{A(y'+ z)}\varphi(y'+ z) dm(z)\right|, \nonumber
\end{align}
where 
$$
T^{-1}(\{x\})= \{x'\} + \mathrm{Ker}(T),\; T^{-1}(\{y\})= \{y'\} + \mathrm{Ker}(T) \;.
$$
Hence,
\begin{align}
& \left| \mathcal{L}_A(\varphi)(x) - \mathcal{L}_A(\varphi)(y) \right| \nonumber \\
& \leq   \int_{\mathrm{Ker}(T)} e^{\|A\|_{\infty}} \left| \varphi(x'+ z)  - \varphi(y'+ z)  \right| \nonumber 
+ \|\varphi\|_{\infty}\left|e^{A(x'+ z)} - e^{A(y'+ z)}  \right| dm(z) \nonumber \\
&\leq \left( e^{\|A\|_{\infty}} \mathrm{Lip}_{\varphi, D_X^{\alpha}} + \|\varphi\|_{\infty} \mathrm{Lip}_{e^A, D_X^{\alpha}} \right) \int_{\mathrm{Ker}(T)} \|x'- y'\|_X^{\alpha}dm(z) \;. \nonumber
\end{align}

Using \eqref{fg} we deduce that
$$
\left| \mathcal{L}_A(\varphi)(x) - \mathcal{L}_A(\varphi)(y) \right|
 \leq  \frac{ e^{\|A\|_{\infty}} \mathrm{Lip}_{\varphi, D_X^{\alpha}} + \|\varphi\|_{\infty} \mathrm{Lip}_{e^A, D_X^{\alpha}} }{p(T)^{\alpha}}  \|x- y\|_X^{\alpha} \;.
$$

Then, $\mathcal{L}_A(\varphi) \in \mathrm{Lip}_b(D_X^{\alpha}, X)$. Under the assumption $\|x - y\|^{\alpha}_X < \delta$ the proof for the case $\mathrm{Lip}^{loc(\delta)}_b(D_X^{\alpha}, X)$ follows the same reasoning.
\end{proof}

We say that a potential $A \in \mathcal{C}(X)$ has {\it summable variation} with respect to the linear operator $T : X \to X$ if satisfies
\begin{equation}
\label{summable-variation}
V_T(A) = \sum_{n = 1}^{\infty} V_{T,n}(A) < \infty \;,
\end{equation}
where
\[
\label{summable-variation1}
V_{T,n}(A) := \sup\{|A(z_n + x_n) - A(z_n + y_n)| :\; z_n \in \mathrm{Ker}(T^n),\; x_n, y_n \in E_n\}.
\]

We will denote by $\mathcal{SV}_T(X)$ the set of potentials $A \in \mathcal{C}(X)$ satisfying \eqref{summable-variation}.

When considering a more general setting (as in this section) we need an assumption  that is stronger that 
adapted tails condition which was used before.

\begin{definition}  \label{Def:strong-adapted}
We say that $m$ has {\it strong adapted tails} if, for any $\epsilon > 0$ there exists a sequence of positive numbers $(\kappa_n)_{n \geq 1}$, such that, 
\begin{enumerate}
\item $\sum_{n=1}^\infty m(\mathbb{R} \setminus [-p(T^n)\kappa_n, p(T^n)\kappa_n]) < \epsilon$;
\item the sequence $ (\kappa_n)_{n \geq 1} $ is in $l^1(\mathbb{R})$. 
\end{enumerate}
\end{definition}

\begin{theorem}
\label{RPF-separable}
Let $X$ be a separable Banach space and $T: X \to X$ a bounded linear operator, such that, $T$ is onto, not bijective, satisfying \eqref{sjnf1}, $p(T^n)>0$  and $\dim(\mathrm{Ker}(T^n)) < \infty$, for each $n \in \mathbb{N}$.  Consider $A \in \mathrm{Lip}_b(D_X^{\alpha}, X)$, assume that the a priori measure $m$ has strong adapted tails and 
$$
\sum_{n=1}^{  \infty} (p (T^n))^{-\alpha} < \infty \;.
$$

Then:
\begin{enumerate}[a)] 
\item
If $A \in \mathcal{SV}_T(X) $, then there exist $\lambda_A > 0$ and  a strictly positive function $\psi_A \in \mathrm{Lip}_b(D_X^{\alpha}, X)$  such that $\mathcal{L}_A (\psi_A)(x) = \lambda_A \psi_A(x)$, for each $x \in X$.
\item If $m$ has adapted tails and $\mathcal{L}_A(1) = 1$, then there exists a unique $\sigma$-additive Borel probability measure $\mu_A$ which is a fixed point for the operator $\mathcal{L}^*_{A}$. Moreover, $\lim_{n \to \infty} (\mathcal{L}^n_{A})^\ast(\nu) = \nu$ in the weak-$\ast$ topology for any Borel probability measure $\nu$. In particular, $\mu_A$ is an invariant and mixing probability measure (and therefore conservative and ergodic).
\item If $A \in \mathcal{SV}_T(X)$ and  $m$ has adapted tails, then  there exists a unique fixed point $\mu_A$  for the operator $\mathcal{L}^*_{\overline{A}}$, where   $\overline{A} = A + \log(\psi_A) - \log(\psi_A \circ L) - \log(\lambda_A)$. Moreover,  $\mu_A$ is an invariant and mixing $\sigma$-additive probability measure.
\item If $A \in \mathcal{SV}_T(X)$ and $\pi : X \to \mathrm{Ker}(T)$ is  the projection on the kernel of $T$, we get 
$
 \hbox{supp}(\mu_A)  \supset   \pi^{-1}(\hbox{supp}(m)) 
 $. From this follows that $\hbox{supp}(\mu_A) = X$.
\end{enumerate}
\end{theorem}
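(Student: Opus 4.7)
The plan is to adapt step by step the proof of Theorem \ref{RPF}, replacing the coordinate-based arguments (which relied on the Schauder basis $\{e_k\}$ and the explicit shift structure) by intrinsic arguments based on the decomposition $X = \mathrm{Ker}(T^n) \oplus E_n$. Throughout, the role of $d_n$ is played by $p(T^n)$, integration over $\mathbb{R}$ is replaced by integration over $\mathrm{Ker}(T)$ against the a priori probability $m$, and the summability hypothesis $\sum(d_n)^{-\alpha} < \infty$ is replaced by $\sum (p(T^n))^{-\alpha} < \infty$; the summable variation seminorm $V_n(A)$ gives way to $V_{T,n}(A)$.

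For part (a), I would first define the discount operator $\mathcal{T}_{s,A}$ on $\mathcal{C}_b(X)$ by
\[ \mathcal{T}_{s,A}(u)(x) := \log \int_{\mathrm{Ker}(T)} e^{A(x' + z) + s u(x' + z)} dm(z), \]
where $x' \in E$ denotes the unique element with $T(x') = x$ (well-defined by $X = \mathrm{Ker}(T) \oplus E$). The computation from the proof of Theorem \ref{RPF}(a) shows verbatim that $\mathcal{T}_{s,A}$ is an $s$-contraction of $(\mathcal{C}_b(X), \|\cdot\|_\infty)$, producing a unique fixed point $u_s$. The key geometric input for the equicontinuity of $u_s^* := u_s - u_s(0)$ is that if $x^{(n)}, y^{(n)} \in E_n$ are the unique preimages of $x, y$ in $E_n$ under $T^n$, then $\|x^{(n)} - y^{(n)}\|_X \leq \|x - y\|_X / p(T^n)$, which follows from \eqref{fg} applied to $T^n$. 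Iterating the fixed-point identity in the manner of \eqref{n-th-variation-sum} and using the $\alpha$-Hölder condition on $A$ yields
\[ |u_s(x) - u_s(y)| \leq \mathrm{Lip}_{A, D_X^\alpha} \Bigl(\sum_{n=1}^\infty (p(T^n))^{-\alpha}\Bigr) \|x - y\|^\alpha_X, \]
which is finite by hypothesis. Uniform boundedness of $(u_s^*)$ follows from $A \in \mathcal{SV}_T(X)$ by the analogue of the estimate leading to \eqref{uniform-bound}, with $V_{T,n}(A)$ in place of $V_n(A)$. Arzelà--Ascoli applied to compact subsets of the separable space $X$ then extracts a subsequence $s_n \to 1$ along which $u_{s_n}^* \to u$, and setting $\psi_A := e^u$ and $\lambda_A := e^{-\kappa}$ with $\kappa := \lim_n (1-s_n) u_{s_n}(0)$, Dominated Convergence gives $\mathcal{L}_A(\psi_A) = \lambda_A \psi_A$, with $\psi_A \in \mathrm{Lip}_b(D_X^\alpha, X)$ and $\psi_A > 0$.

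For part (b) I would invoke a general-Banach-space version of Theorem \ref{theo:conformal-measure} from the Appendix: the strong adapted tails hypothesis (demanding $(\kappa_n) \in l^1(\mathbb{R})$ rather than merely $(\kappa_n) \in X$) is precisely what is needed to push the Wasserstein contraction argument through when $\mathrm{Ker}(T)$ is an arbitrary finite-dimensional subspace, since the stronger summability compensates for the loss of the sharp coordinate structure of the shift. Once $\mu_A$ is obtained, $T$-invariance and mixing follow from $\int \varphi \circ T \, d\mu_A = \int \mathcal{L}_A(\varphi \circ T) d\mu_A = \int \varphi d\mu_A$ and the weak-$*$ convergence $(\mathcal{L}_A^n)^*(\nu) \to \mu_A$, exactly as in Theorem \ref{RPF}(b). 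Part (c) is then immediate: the normalization $\overline{A} := A + \log \psi_A - \log(\psi_A \circ T) - \log \lambda_A$ satisfies $\mathcal{L}_{\overline{A}}(1) = 1$, and applying (b) to $\overline{A}$ yields the conclusion. For part (d), given a neighborhood $U \ni x$, choose $\epsilon > 0$ so that $\{x + z : z \in \mathrm{Ker}(T),\ \|z\|_X < \epsilon\} \subset U$ (possible since $\mathrm{Ker}(T)$ is finite-dimensional); a Urysohn bump $\varphi$ supported in this set together with the invariance identity $\int \varphi d\mu_A = \int \mathcal{L}_A(\varphi) d\mu_A$ produces, by pushing integration into the kernel fiber, $\mu_A(U) \geq e^{\inf A} m(\{z \in \mathrm{Ker}(T) : \|z - \pi(x)\|_X < \epsilon\}) > 0$ whenever $\pi(x) \in \mathrm{supp}(m)$, which gives the claim. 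The hardest step is the extension of the appendix result to this setting: adapting the Wasserstein contraction of $\mathcal{L}_A^*$ from the one-dimensional kernel of the shift to arbitrary finite-dimensional kernels, and verifying that the strong adapted tails condition provides the right quantitative balance between the expansion of $T^n$ on $E_n$ and the mass decay of $m$ at infinity on $\mathrm{Ker}(T)$.
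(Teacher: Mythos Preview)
Your proposal is correct and follows essentially the same approach as the paper's own proof: the discount operator $\mathcal{T}_{s,A}$ and its contraction property, the equicontinuity estimate via $\|x^{(n)}-y^{(n)}\|_X \leq p(T^n)^{-1}\|x-y\|_X$, the uniform bound from $V_T(A)$, the Arzel\`a--Ascoli extraction, the reduction of (c) to (a)+(b) via normalization, and the Urysohn argument for (d) all mirror the paper line by line. The paper likewise defers (b) to the appendix and merely states that the reasoning of Theorem~\ref{RPF} carries over, which is exactly what you outline; your remark that the strong adapted tails condition is what makes the Wasserstein-contraction argument go through in the general kernel setting is a correct reading of why Definition~\ref{Def:strong-adapted} is introduced.
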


\begin{proof}
The proof follows  basically  the same reasoning of  the proof of Theorem \ref{RPF}.

\noindent \textit{Part a)} For each $s \in (0, 1)$, we consider  the operator $\mathcal{T}_{s, A} : \mathcal{C}_b(X) \to \mathcal{C}_b(X)$ given by 
\begin{equation}
\mathcal{T}_{s, A}(u)(x) = \log\left( \int_{v  \in T^{-1}(\{x\})} e^{A(v) + su(v)} dm(v) \right) \nonumber \;.
\label{discount3}
\end{equation}
In the same way as before, for all $s \in (0, 1)$, the map $\mathcal{T}_{s, A}$ has a unique fixed point $u_s \in \mathcal{C}_b(X)$. That is, for all $x$ and $0 < s < 1$ 
$$
e^{u_s(x)} = \int_{v \in T^{-1}(\{x\}) } e^{A(v) + su_s(v)} dm(v) \;.
$$
Following reasoning of the proof of Theorem \ref{RPF} we need to prove that the family of bounded continuous functions is equicontinuous $(u_{s})_{0< s<1}$.

In the same way as was done before, we have
$|u_s(x') - u_s(y)| \leq \mathcal{S}_1(x, y)$, where
$$
\mathcal{S}_1(x, y)= \sup_{z \in \mathrm{Ker}(T)} \Bigl\{A( x'+ z)- A(y'+z)+ s( u_s (x'+ z)- u_s (y'+ z))\Bigr\} \;.
$$

For elements  $x', y'$ in $X$, such that, $T(x')= x$ and $T(y')=y$, take
\begin{align}
\mathcal{S}_n(x, y)
&=  \sup_{\substack{z_j \in \mathrm{Ker}(T^j) \\ 1 \leq j \leq n}} \Bigl\{\Bigl(\sum_{j = 1}^n s^{j-1} \bigl(A(x_j+ z_j) - A(y_j+ z_j)\bigr)\Bigr)
 \nonumber \\
& \ \  \ \  \ \  \ \  \ \  \ \  \ \
+ s^n (u_s(x_n+ z_n) - u_s(y_n+ z_n)) \Bigr \} \nonumber \;,
\end{align}
where $x_j$ and $y_j,\; 1 \leq j \leq n$, are fixed elements of $X$ satisfying
$$T^j(x_j)= x, \; T^{j}(y_j)=y \mbox { for all } 1 \leq j \leq n \;.$$

The sequence  $(\mathcal{S}_n(x, y))_{n \geq 1}$ satisfies
\begin{align}
\mathcal{S}_n(x, y)
&\leq \sup_{\substack{z_j \in \mathrm{Ker}(T^j) \\ 1 \leq j \leq n}} \Bigl\{\Bigl(\sum_{j = 1}^n s^{j-1}\mathrm{Lip}_{A, D_X^{\alpha}} \|x_j - y_j \|_X^{\alpha}
+ 2s^n \|u_s\|_{\infty} \Bigr\} \nonumber \\
&\leq \Bigl(\sum_{j = 1}^n s^{j-1}\mathrm{Lip}_{A, D_X^{\alpha}} \frac{1}{p(T^{j})^{\alpha}} \|x - y\|_X^{\alpha}\Bigr) + 2s^n \|u_s\|_{\infty} \nonumber \\
&\leq \Bigl(\sum_{j = 1}^n \mathrm{Lip}_{A, D_X^{\alpha}} \frac{1}{p(T^{j})^{\alpha}} \|x - y\|_X^{\alpha}\Bigr) + 2s^n \|u_s\|_{\infty} \nonumber \;.
\end{align}

We deduce that for all $s \in (0, 1)$ and each pair $x, y \in X$ is satisfied

$$
|u_s(x) - u_s(y)| \leq \Bigl( \mathrm{Lip}_{A, D_X^{\alpha}} \sum_{n = 1}^{\infty} \frac{1}{p(T^{n})^{\alpha}}\Bigr) \; \|x - y\|_X^{\alpha} \;.
$$

Besides that, by \eqref{summable-variation}, it follows that
\[
|u_s(x) - u_s(y)| \leq V_T(A) \,.
\]

This is the end of the proof of item a).
\medskip

\noindent \textit{Part b)} This part follows exactly the same reasoning followed in the  proof of Theorem \ref{RPF} and will not be presented.

\medskip

\noindent \textit{Part c)} This part is a consequence of a) and b). Indeed,  for any $A \in \mathrm{Lip}_b(D^\alpha_X, X) \cap \mathcal{SV}_T(X)$ and any $x \in X$ we have
\begin{align*}
\mathcal{L}_{\overline{A}}(1)(x) 
&= \int_{\mathrm{Ker}(T)} e^{\overline{A}(x' + z)} d m(z), \ \  \ \ x' \in T^{-1}(\{x\}) \\
&= \frac{1}{\lambda_A \psi_A(x)} \int_{\mathrm{Ker}(T)} e^{A(x' + z)}\psi_A(x' + z) d m(z), \ \  \ \ x' \in T^{-1}(\{x\}) \\
&= \frac{\mathcal{L}_A(\psi_A)(x)}{\lambda_A \psi_A(x)} \\
&= 1 \;.
\end{align*}

\medskip

\noindent \textit{Part d)} Define $U(x; \epsilon) := \{y \in X ,\; |\pi(y) - \pi(x)| \leq \epsilon\,, \text{ and } y - \pi(y) = x - \pi(x) \}$, and let $U$ be a neighborhood of $x$. There exists $\epsilon > 0$ such that the closed set $U(x; \epsilon)$ is contained in $U$. By Urysohn's Lemma, let $\varphi \in \mathcal{C}_b(X)$ such that $\chi_{U(x; \epsilon)} \leq \varphi \leq \chi_U$. Then, 
\begin{align*}
\mu_A(U)
&\geq \int_{X}\varphi d\mu_A \nonumber \\
&= \int_{X}\mathcal{L}_{{A}}(\varphi)d\mu_A \nonumber \\
&= \int_{X} \Bigl(\int_{\mathrm{Ker}(T)} e^{A(y' + z)} \varphi(y' + z)dm(z)\Bigr)d\mu_A(y), \ \  \ \  y' \in T^{-1}(\{y\}) \\
&\geq e^{\inf({A})}\int_{X}\Bigl(\int_{\mathrm{Ker}(T)}\chi_{U(x; \epsilon)}\bigl(y' + z) dm(z)\Bigr)d\mu_A(y), \ \  \ \  y' \in T^{-1}(\{y\}) \\
&\geq e^{\inf({A})}m(\epsilon B_{\mathrm{Ker}(T)} + \{\pi(x)\}) > 0 \nonumber \;.
\end{align*}

The above implies that $\mu_A(U) > 0$, for any open neighborhood $U \subset X$ of $x$. moreover, since $\mu_A(U) \geq e^{\inf({A})}m(\epsilon B_{\mathrm{Ker}(T)} + \{\pi(x)\})$, we have $\mathrm{supp}(\mu_A) \supset \pi^{-1}(\hbox{supp}(m))$.
\end{proof}

It follows from  the above that:

\begin{corollary}
\label{corollary-RPF-separable}
Consider $X$ a separable Banach space and $T: X \to X$ a bounded linear operator such that $T$ is onto, not injective,  satisfying \eqref{sjnf1}, $p(T^n)>0$ and $\dim(\mathrm{Ker}(T^n)) < \infty$, for each $n \in \mathbb{N}$.  Consider $A \in \mathcal{SV}_T(X) \cap \mathrm{Lip}_b(D_X^{\alpha}, X)$. Assume that  the {\it  a priori} probability measure $m$ has strong adapted tails and that  the following is true:
$$
\lim_{n \to \infty}  (p (T^n))^{-\frac{1}{n}} < 1 \;.
$$

Then,  items a), b), c) and d) of Theorem \ref{RPF-separable} are true.
\end{corollary}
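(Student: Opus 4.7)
The corollary is the general-operator analogue of Proposition \ref{pr-limsup}, which in the weighted-shift case was obtained from Theorem \ref{RPF} via Lemma \ref{rgh}. Since all the remaining hypotheses of Theorem \ref{RPF-separable} ($X$ separable Banach, $T$ onto and not injective, splitting \eqref{sjnf1}, $p(T^n)>0$, finite-dimensional kernels, $A \in \mathcal{SV}_T(X) \cap \mathrm{Lip}_b(D_X^\alpha, X)$, and strong adapted tails for $m$) are already assumed, the entire task reduces to showing that
\[
\lim_{n\to\infty} (p(T^n))^{-1/n} < 1 \quad \Longrightarrow \quad \sum_{n=1}^\infty (p(T^n))^{-\alpha} < \infty,
\]
after which one simply invokes Theorem \ref{RPF-separable} directly to conclude all four items.

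The plan for the implication is an application of Fekete's subadditive lemma followed by a geometric comparison. First I would record that the limit in question exists: the inequality $p(T^{n+m}) \geq p(T^n)\, p(T^m)$ (noted in the paragraph following \eqref{sjnf1}) makes the sequence $n \mapsto -\log p(T^n)$ subadditive, so by Fekete's lemma $\lim_{n\to\infty} -\tfrac{1}{n}\log p(T^n)$ exists and equals $\inf_n -\tfrac{1}{n}\log p(T^n)$; equivalently, $\rho := \lim_{n\to\infty}(p(T^n))^{-1/n}$ exists, and by hypothesis $\rho < 1$.

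Second, fix any $\rho' \in (\rho, 1)$. By definition of the limit, there is $N \in \mathbb{N}$ such that $(p(T^n))^{-1/n} \leq \rho'$ for all $n \geq N$, hence
\[
(p(T^n))^{-\alpha} \leq (\rho')^{n\alpha}, \qquad n \geq N.
\]
The tail $\sum_{n\geq N}(\rho')^{n\alpha}$ is a convergent geometric series, and the finite initial sum $\sum_{n<N}(p(T^n))^{-\alpha}$ is finite since $p(T^n)>0$ by assumption. Together these yield $\sum_{n=1}^\infty (p(T^n))^{-\alpha} < \infty$, which is precisely the hypothesis of Theorem \ref{RPF-separable}.

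There is no serious obstacle here: the content is the routine passage from a root-test-type inequality to absolute summability, exactly paralleling the argument in Lemma \ref{rgh} for weighted shifts. Once the summability has been verified, items a), b), c), and d) of Theorem \ref{RPF-separable} transfer verbatim, giving the existence of the eigenfunction $\psi_A$, the normalization $\overline{A}$, the unique fixed point $\mu_A$ of $\mathcal{L}^*_{\overline{A}}$ with its mixing and invariance properties, and the support statement $\mathrm{supp}(\mu_A) \supset \pi^{-1}(\mathrm{supp}(m))$.
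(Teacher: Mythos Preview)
Your proposal is correct and matches the paper's approach: the paper states the corollary immediately after Theorem \ref{RPF-separable} with only ``It follows from the above that'' as justification, relying on the subadditivity of $-\log p(T^n)$ (already recorded in the text) and the Cauchy/root-test passage from $\lim_n (p(T^n))^{-1/n}<1$ to $\sum_n (p(T^n))^{-\alpha}<\infty$, exactly as in the weighted-shift case via Lemma \ref{rgh} and Proposition \ref{pr-limsup}. You have simply spelled out these implicit steps.
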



In the next result, we prove that a sub class of  the class of operators satisfying the claim of  last Corollary  is formed by frequently hypercyclic operators.

\begin{proposition}
\label{hyperc}
Let $X$ be a separable Banach space and $T: X \to X$ be a bounded linear operator, such that, $T$ is onto, not one to one, satisfying \eqref{sjnf1}  and  $\displaystyle \bigcup_{n = 0}^{\infty} \mathrm{Ker}(T^n)$ is dense in $X$.  Assume that
$$
\lim_{n \to \infty}  (p (T^n))^{-\frac{1}{n}} < 1 \;,$$
then,  $T$ is frequently hypercyclic and Devaney chaotic.
\end{proposition}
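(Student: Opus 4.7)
My plan is to invoke the Bayart--Grivaux Frequent Hypercyclicity Criterion: for a bounded operator on a separable Banach space, the existence of a dense subset $D_0$ and a map $S : D_0 \to D_0$ with $T S = \mathrm{id}$ on $D_0$ and such that both $\sum_n T^n x$ and $\sum_n S^n x$ converge unconditionally for every $x \in D_0$ implies that $T$ is simultaneously frequently hypercyclic, mixing, and Devaney chaotic. The whole proof then reduces to exhibiting such a pair $(D_0, S)$ in our setting.

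For the dense set I would take $D_0 := \bigcup_{n \geq 0} \mathrm{Ker}(T^n)$, which is dense by hypothesis. Any $x \in D_0$ lies in some $\mathrm{Ker}(T^n)$, so $T^k x$ vanishes for $k \geq n$ and $\sum_k T^k x$ is a finite sum; the $T$-side of the criterion is therefore automatic.

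The splittings \eqref{sjnf1} suggest defining $S$ as the inverse of $T|_{E_1} : E_1 \to X$. The hypothesis $p(T) > 0$ and inequality \eqref{fg} force $T|_{E_1}$ to be injective, while $T(E_1) = X$ gives surjectivity; so $S$ is well-defined, with image in $E_1$. More generally, the same argument shows that $T$ restricts to a bijection from $E_{n+1}$ onto $E_n$, so iterating gives $S^k x \in E_k$ for every $x \in X$ and $k \geq 1$. A short calculation confirms that $S y \in \mathrm{Ker}(T^{n+1})$ whenever $y \in \mathrm{Ker}(T^n)$, so $S(D_0) \subset D_0$, and $TS = \mathrm{id}$ holds by construction.

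The main obstacle, such as it is, is bounding $\|S^k x\|_X$. Applying \eqref{fg} to $T^k$ on $E_k$ gives
\[
p(T^k) \|S^k x\|_X \leq \|T^k S^k x\|_X = \|x\|_X,
\]
and the hypothesis $\lim_k (p(T^k))^{-1/k} < 1$ furnishes constants $C > 0$ and $\rho \in (0, 1)$ with $(p(T^k))^{-1} \leq C\rho^k$ for all $k$, yielding geometric decay of $\|S^k x\|_X$. Thus $\sum_k S^k x$ converges absolutely in $X$, hence unconditionally, and the criterion applies. Once this estimate is in hand, the rest of the argument is essentially mechanical, and Devaney chaos follows from the criterion by the standard trick of forming periodic points $y_N := x + \sum_{k \geq 1}(T^{kN} + S^{kN})x$ for $N$ large enough that $T^N$ annihilates $x$.
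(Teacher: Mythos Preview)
Your argument is essentially the paper's: both invoke the Bayart--Grivaux criterion quoted in the remark just before the proposition, take the dense set $D=\bigcup_n \mathrm{Ker}(T^n)$, observe that $\sum_n T^n x$ is a finite sum on $D$, build a right inverse $S$ of $T$, and deduce absolute (hence unconditional) convergence of $\sum_n S^n x$ from the bound $\|S^n x\|_X \leq \|x\|_X / p(T^n)$ together with the hypothesis $\lim_n p(T^n)^{-1/n}<1$. You are in fact more explicit than the paper, which simply picks an arbitrary preimage $y$ with $T(y)=x$ and then applies \eqref{fg} without comment; your choice $S=(T|_{E_1})^{-1}$ and the check that $S(D_0)\subset D_0$ make the construction cleaner.

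One soft spot, shared by your write-up and the paper's, is the step ``$S^k x\in E_k$'': with $S=(T|_{E_1})^{-1}$ each iterate $S^k x$ lies in $E_1$, and the chain of bijections $T|_{E_{n+1}}:E_{n+1}\to E_n$ does not by itself force $S^k x\in E_k$ unless the complements happen to be nested, which \eqref{sjnf1} alone does not guarantee. The paper makes the same leap when it invokes \eqref{fg}. The repair is routine (for instance, set $S_n:=(T^n|_{E_n})^{-1}$ directly, so that $\|S_n x\|_X\le \|x\|_X/p(T^n)$ and $T S_n=S_{n-1}$, and feed these into the sequence version of the criterion; or pass to a power $T^{n_0}$ with $p(T^{n_0})>1$), but it is worth being aware that neither argument spells this out. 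Your final sentence about building periodic points by hand is unnecessary, since the criterion already yields Devaney chaos.
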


\begin{example}
If $L: X \to X$ is a weighted shift and $X$ is $c_0(\mathbb{R})$ or $l^p(\mathbb{R})$, $1 \leq p < \infty$. Then,
$$
\bigcup_{n = 0}^{\infty} \mathrm{Ker}(L^n)= \{(x_{n})_{n \geq 1} \in X,\; \exists n_0 \in \mathbb{N},\; x_n=0,\; \forall n \geq n_0\} \;.
$$
Hence, $\displaystyle \bigcup_{n = 0}^{\infty} \mathrm{Ker}(L^n)$ is dense in $X$.
\end{example}

\begin{remark}
There is an efficient criterion which guarantees that $T$ is Devaney  chaotic and frequently hypercyclic (see \cite{BaMa}).

Let $X$ be a separable Banach space and $T: X \to X$ be a continuous linear operator.
 Assume that there
exist a dense set $D \subset X$ and a map $S :D  \to D$, such that,

\begin{enumerate}
\item
For any $x \in D$, the series $\displaystyle \sum_{n =0} ^{\infty} T^{n}(x)$ and $\displaystyle \sum_{n =0} ^{\infty} S^{n}(x)$ are unconditionally convergent (all sub-series of both series are convergent).
\item
For every $x \in D,\;  T \circ S(x)= x$.
\end{enumerate}
Then, $T$ is  Devaney chaotic and frequently hypercyclic.
\end{remark}

\begin{proof}[Proof of Proposition \ref{hyperc}]
Set $\displaystyle D= \bigcup_{n = 0}^{\infty} \mathrm{Ker}(T^n)$, then
for all $x \in D$, there exists a non-negative integer  $n_0 = n_0(x)$, such that,
$$
T^n (x)= 0, \mbox { for all } n \geq n_0 \;.
$$
Hence, the series $\displaystyle \sum_{n =0} ^{\infty} T^{n}(x)$ is unconditionally convergent.

On the other hand, since $T$ is onto, then for any $x \in D \setminus \{0\}$, there exists
$y \in X$ such that $T(y)= x$. Taking $S(x)= y,$  we deduce by induction that
$$T^n \circ S^n (x)= x, \mbox { for all } n \in \mathbb{N}.$$
Hence, by \eqref{fg},  it follows that
$$
\|S^n (x) \|_X \leq \frac{\|x \|_X}{p(T^n)}, \mbox { for all } n\geq 1 \;.
$$

Since $\displaystyle \lim_{n \to \infty}  (p (T^n))^{-\frac{1}{n}} < 1,$
we deduce that the series $\displaystyle \sum_{n =0}^{\infty} S^{n}(x)$ is absolutely convergent and hence unconditionally convergent, and we are done.
\end{proof}




\section{Appendix} \label{ap}

The purpose of this appendix is to prove the existence of a unique fixed point of the operator $\mathcal{L}_A^*$, when $A$ is normalized.
Throughout, we assume that $X =c_0(\mathbb{R})$ or $X= l^p(\mathbb{R}) ,\; 1 \leq p < \infty$, and that $A \in \mathrm{Lip}_b(D_X^{\alpha}, X)$ and $\mathcal{L}_A(1) = 1$. For this setting, our aim is to prove that  the operator $\mathcal{L}_A^*$ has a fixed point Borel probability measure on $X$. In order to do so,  we are following the approach to spectral gaps for Markov operators by M. Hairer and J. C. Mattingly in \cite{HaM}, which was adapted to the dual of the Ruelle 
operator by several authors (see \cite{Sta},\cite{KLS},\cite{Lo},\cite{BS},\cite{CSS},\cite{Elis},\cite{SVZ}, in chronological order). 

\begin{remark} \label{esta1}
However, as a consequence of the metric structure of $X$, the adapted approach does not provide uniform contraction rates and, in particular, we only obtain convergence but not geometric convergence of the iterates of $\mathcal{L}_A^*$ . 
\end{remark}

We now recall the 
strategy of the approach which relies on a change to the bounded and equivalent metric 
\[\widetilde{D}(x, y) :=   \min\left\{1,a D_X^{\alpha}(x, y)\right\}. \]
By choosing $a > 0$ properly, a version of the Doeblin-Fortet inequality then implies that $(\mathcal{L}_A^n)^*$ locally contracts the Wasserstein distance for some $n \in \mathbb{N}$. Moreover, a coupling construction then makes use of the boundedness of $\widetilde{D}$ in order to obtain a contraction for Dirac measures whose base points have $\widetilde{D}$-distance bigger than 1. By joining these two results on a further coupling, one then obtains the desired contraction property.

We begin with a comparison of the spaces of Lipschitz continuous functions with respect to the change of metric. For ease of notation, set $D:= D_X^{\alpha}$. Moreover, recall that $\mathrm{Lip}^{{\mathrm  loc }(\delta)}(D, X)$ is the set of continuous functions $\varphi: X \to \mathbb{R}$ such that
\[
\mathrm{Lip}^{\mathrm{loc}(\delta)} _{\varphi, D} := \sup\left\{\frac{|\varphi(x) - \varphi(y)|}{D(x, y)} :\; x \neq y,\; D(x, y) < \delta \right\} < \infty \;.
\]

\begin{lemma} 
\label{pio} 
A function $\varphi$ is in  $ \mathrm{Lip}(\widetilde{D}, X)$ if and only if $\varphi \in\mathrm{Lip}^{\mathrm{{loc}}({1}/{a})}_b (D,X)$. Moreover, $ a \mathrm{Lip}^{\mathrm{{loc}}({1}/{a})}_{\varphi, D} =  \mathrm{Lip}^{\mathrm{{loc}}(1)}_{\varphi, \widetilde{D}}$, 
\begin{equation} \label{helpx}
 a \mathrm{Lip}^{\mathrm{{loc}}({1}/{a})}_{\varphi, D} \leq \mathrm{Lip}_{\varphi, \widetilde{D}} \leq \max\left\{ 2 \|\varphi\|_\infty,  \frac{1}{a} \,\mathrm{Lip}^{\mathrm{{loc}}({1}/{a})}_{\varphi, D} \right\},  
\end{equation}
and $\sup(\varphi) - \inf(\varphi) \leq \mathrm{Lip}_{\varphi, \widetilde{D}}$.  
 \end{lemma}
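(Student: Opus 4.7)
The plan is to exploit the piecewise structure of $\widetilde{D}$: for any pair $x \neq y$ in $X$, either $aD(x,y) < 1$ and $\widetilde{D}(x,y) = aD(x,y)$, or $aD(x,y) \geq 1$ and $\widetilde{D}(x,y) = 1$. In the first regime the two metrics differ only by the multiplicative factor $a$, while in the second regime $\widetilde{D}$ is constantly equal to $1$, so the corresponding difference quotients are controlled by the total oscillation of $\varphi$.

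To obtain the equality of local Lipschitz constants, I would note that $\widetilde{D}(x,y) < 1$ is equivalent to $D(x,y) < 1/a$, and on this set $\widetilde{D} = aD$; substituting this into the quotient $|\varphi(x)-\varphi(y)|/\widetilde{D}(x,y)$ and pulling out the factor $a$ yields the claimed identity linking $\mathrm{Lip}^{\mathrm{loc}(1)}_{\varphi,\widetilde{D}}$ and $\mathrm{Lip}^{\mathrm{loc}(1/a)}_{\varphi,D}$. The left inequality of \eqref{helpx} is then immediate from the trivial fact that restricting the supremum defining $\mathrm{Lip}_{\varphi,\widetilde{D}}$ to $\{\widetilde{D}(x,y) < 1\}$ only decreases it, so $\mathrm{Lip}^{\mathrm{loc}(1)}_{\varphi,\widetilde{D}} \leq \mathrm{Lip}_{\varphi,\widetilde{D}}$, and one combines this with the equality just established. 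For the right inequality, I would split an arbitrary pair $(x,y)$ into the two regimes: if $\widetilde{D}(x,y) = 1$ then $|\varphi(x)-\varphi(y)|/\widetilde{D}(x,y) \leq 2\|\varphi\|_\infty$; if $\widetilde{D}(x,y) < 1$, the ratio equals $|\varphi(x)-\varphi(y)|/(aD(x,y))$ and is bounded via the local Lipschitz condition. Taking the supremum and combining via $\max$ yields the upper bound.

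For the oscillation inequality I would simply observe that $\widetilde{D} \leq 1$ globally, so $|\varphi(x) - \varphi(y)| \leq \mathrm{Lip}_{\varphi,\widetilde{D}}\, \widetilde{D}(x,y) \leq \mathrm{Lip}_{\varphi,\widetilde{D}}$ for all $x,y \in X$, giving $\sup(\varphi) - \inf(\varphi) \leq \mathrm{Lip}_{\varphi,\widetilde{D}}$. The biconditional then follows by combining these estimates: the forward implication uses the oscillation bound (a real function with finite oscillation is bounded) together with the restriction of the Lipschitz inequality to pairs with $D(x,y) < 1/a$; the reverse implication is an immediate consequence of the upper bound of \eqref{helpx}. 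The only delicate point is the careful bookkeeping of the factor $a$ under the change of metric and the separate handling of the two regimes of $\widetilde{D}$; no substantial obstacle arises beyond this case analysis.
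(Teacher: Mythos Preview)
Your proposal is correct and follows essentially the same route as the paper: both arguments hinge on the equivalence $D(x,y)<1/a \Leftrightarrow \widetilde{D}(x,y)<1$ with $\widetilde{D}=aD$ on that region, the two-regime case split for the upper bound in \eqref{helpx}, and the global bound $\widetilde{D}\leq 1$ for the oscillation estimate. If anything, your write-up is slightly more explicit than the paper's in spelling out both directions of the biconditional.
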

 
 \begin{proof}
Note that $D(x, y) < {1}/{a}$ if and only if $\widetilde{D}(x,y) < 1$ which proves that $ a \mathrm{Lip}^{\mathrm{loc}({1}/{a})}_{\varphi, D} =  \mathrm{Lip}^{\mathrm{{loc}}(1)}_{\varphi, \widetilde{D}}$. 

 Now assume that $\varphi \in \mathrm{Lip}^{\mathrm{{loc}}({1}/{a})} (D,X)$ is bounded. Then
  \[ 
\frac{|\varphi(x) - \varphi(y) |}{ \widetilde{D}(x,y)}  = 
\begin{cases} \frac{|\varphi(x) - \varphi(y) |}{ a D(x,y)} \leq  \frac{1}{a} \,\mathrm{Lip}^{\mathrm{{loc}}({1}/{a})}_{\varphi, D} &:\; D(x, y) < {1}/{a}\\
|\varphi(x) - \varphi(y) | \leq 2 \|\varphi\|_\infty &:\; D(x, y) \geq {1}/{a}
\end{cases}
 \]
This proves that $\varphi \in \mathrm{Lip}(\widetilde{D}, X)$ and the left half of \eqref{helpx}. The  left half of  \eqref{helpx}
follows from the trivial estimate $  \mathrm{Lip}^{\mathrm{{loc}}(1)}_{\varphi, \widetilde{D}} \leq  \mathrm{Lip}_{\varphi, \widetilde{D}}$ and  the first observation. Finally, as $\sup \widetilde{D}(x,y)  = {1}$,
\[ \sup(\varphi) - \inf(\varphi)   \leq  \mathrm{Lip}_{\varphi, \widetilde{D}} \; \sup_{x,y \in X} \widetilde{D}(x,y) = \mathrm{Lip}_{\varphi, \widetilde{D}}. \]
\end{proof}

In order to introduce the Wasserstein distance (see for details \cite{Bol, Vi}), recall that a coupling of $\mu,\nu \in \mathcal{P}(X)$ is a  Borel probability measure $\Pi$ on $X \times X$  
such that the first marginal of $\Pi$ is $\mu$ and the second marginal is $\nu$. We refer to $\Gamma(\mu,\nu)$ as the set of all couplings of $\mu$ and $\nu$. The $D$-Wasserstein distance on $\mathcal{P}(X)$ is then defined by 
\begin{equation}
\label{defw} 
W_{D} (\mu,\nu) = \inf \left\{ \int_{X \times X} D(x,y) d \Pi(x,y) : \Pi \in  \Gamma(\mu,\nu) \right\} \nonumber \;,
\end{equation} 
where $D$ is a metric whose open balls generate the Borel $\sigma$-algebra.  Moreover, if $X$ has finite diameter with respect to $D$, then  $W_{D}$ is compatible with the weak convergence of measures, that is, $\mu_n$ converges weakly to $\mu$ if and only of $W_{D} (\mu_n,\mu) \to 0$. A further fundamental tool in this setting is 
Kantorovich's duality, which states that 
 \begin{equation}
\label{DuaK}
W_{D} (\mu,\nu) = \sup \left\{ \Bigl|\int_X \varphi d \mu - \int_X \varphi d \nu \Bigr| :\; \mathrm{Lip}_{\varphi, D}\,\,\leq 1 \right\} \nonumber \;.
 \end{equation} 

We are now in position to prove the basic contraction estimates for our setting. In order to do so, observe that $\mathcal{L}_A(1) =1 $ implies that 
$$
\mathcal{L}_A^*(\mu)(X) = \int_X 1 d\mathcal{L}_A^*(\mu) = \int_X \mathcal{L}_A(1) d\mu =1 \text{ for any } \mu \in  \mathcal{P}(X) \;.
$$

Hence, as 
 $A$ is a normalized potential,  $\mathcal{L}_A^*$ acts on $  \mathcal{P}(X)$. Set 
 \[C_A:=  \sup \left\{ \frac{e^{ \mathrm{Lip}_{A,{D}} \sum_{i=1}^\infty d_i^{-\alpha} t}-1}{t} : 0 < t \leq 1  \right\} \]

\begin{lemma}[Local contraction for Dirac measures] 
\label{loc} 
Assume that $n$ is such that $(d_n)^{-\alpha} \leq \frac{3}{8}$ and that $a = \max\{8 C_A /3,1\} $. Then, for all $x, y \in X$ with  $\widetilde{D}(x,y) < 1$, 
\begin{equation} 
\label{mxnw}   
W_{\widetilde{D}} \,(\, (\mathcal{L}_A^*)^n   (\delta_x) ,                 (\mathcal{L}_A^*)^n   (\delta_y) \,)\leq \frac{3}{4}\,  W_{\widetilde{D}} \,(\delta_x, \delta_y) \nonumber \;.       
\end{equation}
\end{lemma}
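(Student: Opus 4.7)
My plan is to apply Kantorovich duality. Since $A$ is normalized, $\mathcal{L}_A^n$ preserves constants and $\mathcal{L}_A^*$ sends $\mathcal{P}(X)$ to itself, so
\[
W_{\widetilde{D}}\bigl((\mathcal{L}_A^*)^n(\delta_x),(\mathcal{L}_A^*)^n(\delta_y)\bigr) = \sup_{\mathrm{Lip}_{\varphi,\widetilde{D}}\leq 1} |\mathcal{L}_A^n(\varphi)(x) - \mathcal{L}_A^n(\varphi)(y)|,
\]
and it suffices to establish the pointwise bound $|\mathcal{L}_A^n(\varphi)(x) - \mathcal{L}_A^n(\varphi)(y)| \leq \tfrac{3}{4}\,\widetilde{D}(x,y)$ uniformly in such test functions $\varphi$, for every pair $x,y$ with $\widetilde{D}(x,y) < 1$.

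As a preparatory normalization, I would exploit that $\mathcal{L}_A^n(c)=c$ in order to replace $\varphi$ by $\varphi - \tfrac{1}{2}(\sup\varphi + \inf\varphi)$: this leaves the quantity to be estimated unchanged and, together with the bound $\sup\varphi - \inf\varphi \leq \mathrm{Lip}_{\varphi,\widetilde{D}} \leq 1$ from Lemma \ref{pio}, allows me to assume without loss of generality that $\|\varphi\|_\infty \leq 1/2$. The same lemma converts the $\widetilde{D}$-Lipschitz hypothesis into $\mathrm{Lip}^{\mathrm{loc}(1/a)}_{\varphi, D} \leq 1/a$, and the condition $\widetilde{D}(x,y) < 1$ is precisely $a\,D(x,y) < 1$, so that $D(x,y) = \widetilde{D}(x,y)/a$.

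The core step is to invoke Lemma \ref{lemma-Holder}(b) with the locality parameter chosen so that its effective $D$-threshold equals $1/a$. Using $\mathcal{L}_A^n(1) = 1$, this yields an estimate of the shape
\[
|\mathcal{L}_A^n(\varphi)(x) - \mathcal{L}_A^n(\varphi)(y)| \leq D(x,y)\left[\frac{(d_n)^{-\alpha}}{a} + \|\varphi\|_\infty\cdot a\bigl(e^{\mathrm{Lip}_{A,D}\, D_n /a} - 1\bigr)\right].
\]
By the very definition of $C_A$ in the appendix, evaluated at $t = 1/a \in (0,1]$ (legitimate since $a \geq 1$), the last factor is bounded by $C_A$, so after substituting $D(x,y) = \widetilde{D}(x,y)/a$ the bound becomes
\[
\leq \widetilde{D}(x,y)\left[\frac{(d_n)^{-\alpha}}{a^2} + \frac{\|\varphi\|_\infty\, C_A}{a}\right] \leq \widetilde{D}(x,y)\left[\frac{3}{8} + \frac{3}{16}\right] \leq \frac{3}{4}\widetilde{D}(x,y),
\]
where I have used $\|\varphi\|_\infty \leq 1/2$, $(d_n)^{-\alpha} \leq 3/8$, $a \geq 1$, and $a \geq 8C_A/3$ in the final step.

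The step I expect to require the most care is the bookkeeping across the three metrics $D_X$, $D = D_X^\alpha$ and $\widetilde{D}$, together with the interval $t \in (0,1]$ built into the definition of $C_A$: the thresholds in Lemma \ref{lemma-Holder}(b), in Lemma \ref{pio}, in the definition of $\widetilde{D}$, and in the supremum defining $C_A$ must all be matched consistently, and the locality parameter in Lemma \ref{lemma-Holder}(b) is stated in terms of $\|x-y\|_X$ while the natural threshold here lives on the $D$-scale. Once those alignments are in place, the quantitative constants in the statement are tight by design: the Lipschitz term is damped by the spectral hypothesis $(d_n)^{-\alpha} \leq 3/8$, while the distortion term is damped by the choice $a \geq 8 C_A/3$, each contributing at most $3/8$ to the final bracket.
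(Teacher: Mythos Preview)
Your proposal is correct and follows essentially the same route as the paper: Kantorovich duality reduces the Wasserstein bound to a Lipschitz estimate for $\mathcal{L}_A^n(\varphi)$, which is then split into a contraction term governed by $(d_n)^{-\alpha}$ and a distortion term governed by $C_A/a$. The only cosmetic differences are that the paper normalizes $\varphi$ to $\inf\varphi=0$ (so that $\|\varphi\|_\infty\le \mathrm{Lip}_{\varphi,\widetilde D}$) rather than subtracting the midrange, and carries out the estimate directly in the $\widetilde D$-metric instead of citing Lemma~\ref{lemma-Holder}(b) and converting between $D_X$, $D$ and~$\widetilde D$.
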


\begin{proof} As $A$ is normalized, the arguments in the proof of Lemma \ref{lemma-Holder} simplify as follows. Assume that $a \geq 1$, 
$x,y \in X$ with $\widetilde{D}(x,y) < 1$ and that $\varphi \in  \mathrm{Lip}(\widetilde{D}, X)$ with $\inf(\varphi) = 0$. Then, by Lemma \ref{pio},  
\begin{align*}
& \left|\mathcal{L}^n_A(\varphi)(x) - \mathcal{L}^n_A(\varphi)(y) \right|\\
& \leq \int e^{S_n(A)\circ \tau_a(x)}\left| \varphi\circ \tau_a(x) -   \varphi\circ \tau_a(y) \right| dm_n(a) \\
& \quad +  \int \left|e^{S_n(A)\circ \tau_a(x)} -    e^{S_n(A)\circ \tau_a(y)}  \right| \left| \varphi  \circ \tau_a(y) \right| dm_n(a) \\
& \leq \mathrm{Lip}_{\varphi,\widetilde{D}} \widetilde{D}(\tau_a(x),\tau_a(y))   
\\
& \quad + \sup_a\left|  e^{S_n(A)\circ \tau_a(x)- S_n(A)\circ \tau_a(y)} -1 \right|
\int e^{S_n(A)\circ \tau_a(y)} \left| \varphi  \circ \tau_a(y) \right| dm_n(a)
\\
&
\leq \mathrm{Lip}_{\varphi,\widetilde{D}} \widetilde{D}(\tau_a(x),\tau_a(y)) 
+ \sup_a\left|  e^{S_n(A)\circ \tau_a(x)- S_n(A)\circ \tau_a(y)} -1 \right|
\|\varphi\|_\infty \\
& \leq d_n^{-\alpha} \, \mathrm{Lip}_{\varphi,\widetilde{D}} \,  \widetilde{D}(x,y) +  \left(e^{ \mathrm{Lip}_{A,{D}} \sum_{i=1}^\infty d_i^{-\alpha} D(x,y)}-1 \right)  \mathrm{Lip}_{\varphi,\widetilde{D}}  \\
& \leq   \mathrm{Lip}_{\varphi,\widetilde{D}}\left( d_n^{-\alpha}  \; \widetilde{D}(x,y) + C_A {D}(x,y) \right) = \mathrm{Lip}_{\varphi,\widetilde{D}}\left( d_n^{-\alpha}  + \frac{C_A}{a} \right) \widetilde{D}(x,y). 
\end{align*}
Observe that $ \mathcal{L}_A(1)=1$ implies that 
$$
\mathcal{L}_A(\varphi + c)(x) - \mathcal{L}_A(\varphi + c)(y) = \mathcal{L}_A(\varphi)(x) - \mathcal{L}_A(\varphi)(y)\;.
$$

Hence, by combining  Kantorovich's duality with the above estimate,
\begin{align*}
&  W_{\widetilde{D}} \left((\mathcal{L}_A^*)^n   (\delta_x) , (\mathcal{L}_A^*)^n   (\delta_y)   \right)\\ & 
= \sup \left\{ \left|\mathcal{L}^n_A(\varphi)(x) - \mathcal{L}^n_A(\varphi)(y) \right| :  \mathrm{Lip}_{\varphi,\widetilde{D}} \leq 1  \right\} \\
& \leq  \left( d_n^{-\alpha}  + \frac{C_A}{a} \right) \widetilde{D}(x,y) = \left( d_n^{-\alpha}  + \frac{C_A}{a} \right) W_{\widetilde{D}}(x,y).
\end{align*}
The result follows from this and the above choices for $a$ and $n$.
\end{proof}

We now analyze  the action of $\mathcal{L}_A^*$ on $\delta_x$
 and $\delta_y$ for $\widetilde{D}(x,y) = 1$. 
\begin{lemma}[Global contraction for Dirac measures] \label{global-contraction} If $\widetilde{D}(x,y) = 1$ and 
$n$ is chosen such that $aD(x,y) < d_n^{\alpha} $, then
\[W_{\widetilde{D}} ((\mathcal{L}_A^*)^n   (\delta_x) ,                 (\mathcal{L}_A^*)^n   (\delta_y) )\leq 
1 - e^{-\sum_{i=1}^\infty d_i^{-\alpha} D(x,y)}\left(1 -  a d_n^{-\alpha} D(x,y) \right).      
\]
\end{lemma}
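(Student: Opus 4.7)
The plan is to construct an explicit coupling $\Pi$ of $(\mathcal{L}_A^\ast)^n(\delta_x)$ and $(\mathcal{L}_A^\ast)^n(\delta_y)$ by lifting a maximal coupling from the parameter space of $L^n$-preimages. From the formula for $\mathcal{L}_A^n$ derived in Section \ref{RPF-theorem}, the measure $(\mathcal{L}_A^\ast)^n(\delta_x)$ is the pushforward under the map $\tau_x^n : \mathbb{R}^n \to X$, $r \mapsto (r_n, r_{n-1}/\beta^1_1, \ldots, r_1/\beta^{n-1}_1, x_1/\beta^n_1, x_2/\beta^n_2, \ldots)$, of the probability measure on $\mathbb{R}^n$ whose density with respect to $m^{\otimes n}$ is $f_x(r) := \exp(S_n A (\tau_x^n(r)))$; the analogous description holds for $y$. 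The crucial geometric observation is that $\tau_x^n(r)$ and $\tau_y^n(r)$ agree in the first $n$ coordinates, so $\|\tau_x^n(r) - \tau_y^n(r)\|_X \leq d_n^{-1}\|x-y\|_X$ and therefore $D(\tau_x^n(r),\tau_y^n(r)) \leq d_n^{-\alpha} D(x,y)$ uniformly in $r$.

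Next I would take $\pi$ to be the standard maximal coupling of $f_x\, m^{\otimes n}$ and $f_y\, m^{\otimes n}$: place the diagonal mass $(\mathrm{id},\mathrm{id})_\ast(\min(f_x,f_y)\, m^{\otimes n})$, of total weight $\lambda := \int \min(f_x,f_y)\, dm^{\otimes n}$, and distribute the residual through a normalized product of $(f_x - \min(f_x,f_y))\, m^{\otimes n}$ and $(f_y - \min(f_x,f_y))\, m^{\otimes n}$. Then $\Pi := (\tau_x^n \times \tau_y^n)_\ast \pi$ is a coupling of $(\mathcal{L}_A^\ast)^n(\delta_x)$ and $(\mathcal{L}_A^\ast)^n(\delta_y)$. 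Splitting $\int \widetilde{D}\, d\Pi$ according to this decomposition, the hypothesis $aD(x,y) < d_n^{\alpha}$ ensures that the diagonal points satisfy $aD(\tau_x^n(r),\tau_y^n(r)) < 1$, placing us in the linear regime of $\widetilde{D}$; the diagonal contribution is therefore at most $\lambda \cdot a d_n^{-\alpha} D(x,y)$, while the off-diagonal contribution is trivially bounded by $(1-\lambda)\cdot 1$. Summing and invoking the inf-characterization of $W_{\widetilde{D}}$ yields $W_{\widetilde{D}}((\mathcal{L}_A^\ast)^n(\delta_x),(\mathcal{L}_A^\ast)^n(\delta_y)) \leq 1 - \lambda\bigl(1 - a d_n^{-\alpha} D(x,y)\bigr)$.

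The remaining step is a lower bound on $\lambda$, which comes from the Hölder regularity of $A$ via the same coordinate-wise estimate used in Lemma \ref{lemma-Holder}: one has $|S_n A(\tau_x^n(r)) - S_n A(\tau_y^n(r))| \leq \mathrm{Lip}_{A, D} \sum_{j=1}^n d_j^{-\alpha} D(x,y) \leq \mathrm{Lip}_{A,D} \sum_{i=1}^\infty d_i^{-\alpha} D(x,y) =: \Delta$ uniformly in $r$, so the ratio $f_x(r)/f_y(r)$ lies in $[e^{-\Delta}, e^{\Delta}]$; hence $\min(f_x,f_y) \geq e^{-\Delta}\max(f_x,f_y) \geq e^{-\Delta} f_x$, and integrating against $m^{\otimes n}$ gives $\lambda \geq e^{-\Delta}$. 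Combining this with the previous paragraph produces the claimed bound (with $\mathrm{Lip}_{A,D}$ absorbed into the sum as in the statement). I do not anticipate a genuine obstacle, only bookkeeping; the one delicate point is verifying that the maximal coupling is well-defined when $\lambda \in \{0,1\}$, but the strict positivity and two-sided comparability of $f_x, f_y$ just established rule out $\lambda = 0$, and $\lambda = 1$ only sharpens the estimate.
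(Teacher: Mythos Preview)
Your proposal is correct and takes essentially the same approach as the paper: both construct the coupling by placing the common mass $\int \min(f_x,f_y)\,dm^{\otimes n}$ on the ``diagonal'' pairs $(\tau_x^n(r),\tau_y^n(r))$ (the paper calls this measure $R_n$) and completing with a normalized product of the residuals (the paper's $Q_n$), then bounding the diagonal contribution via $D(\tau_x^n(r),\tau_y^n(r))\le d_n^{-\alpha}D(x,y)$ and the common mass from below via the H\"older estimate on $S_nA$. Your framing in terms of a maximal coupling on the parameter space $\mathbb{R}^n$ pushed forward by $\tau_x^n\times\tau_y^n$ is a clean repackaging of exactly the same construction.
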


\begin{proof}
Define 
\[ R_n := \int e^{\min\{ S_n(A)\circ \tau_a(x),S_n(A)\circ \tau_a(y) \} } \delta_{(\tau_a(x),\tau_a(y)) } dm_n,\]
which is a measure on $X \times X$ such that, for any Borel set $A$ and $\pi_i(x_1,x_2) := x_i$, 
\begin{align*} R_n(\pi_i^{-1} (A)) & =   \int 1_A\circ\tau_a(\pi_i(x,y))  e^{\min\{ S_n(A) \tau_a(x),S_n(A) \tau_a(y) \} } dm_n \\
& \leq  \mathcal{L}_A^n(1_A)(\pi_i(x,y))  =    (\mathcal{L}_A^*)^n   (\delta_{\pi_i(x,y)})(A). 
\end{align*}
As it is well known from the theory of couplings, this estimate implies that $R_n$ can be extended to an element in $\Gamma( (\mathcal{L}_A^*)^n   (\delta_{x}), (\mathcal{L}_A^*)^n   (\delta_{y}))$. Or, by a straightforward calculation, one might check that the measure 
\[
Q_n := R_n + \frac{(  (\mathcal{L}_A^*)^n   (\delta_{x})  - R_n \circ \pi_1^{-1}) \otimes   (  (\mathcal{L}_A^*)^n   (\delta_{y})  - R_n \circ \pi_2^{-1}) }{1- R_n(X\times X)}
\]
in fact is such a coupling.
Now observe that $|S_n(A)\circ \tau_a(x)(x) - S_n(A)\circ \tau_a(y)| \leq C D(x,y)$ for $C:= \sum_{i=1}^\infty d_i^{-\alpha}$. Hence, for 
$$\Delta_\epsilon := \{ (x,y) : \widetilde{D}(x,y) \leq \epsilon \}$$ 
and $\epsilon = d_n^{-\alpha} D(x,y)$, we have that 
\begin{align*}
Q_n(\Delta_\epsilon) &
\geq R_n(\Delta_\epsilon) \\
&\geq  \int e^{\min\{ S_n(A)\circ \tau_a(x),S_n(A) \circ \tau_a(y) \} } dm_n \\
& \geq  \int e^{ S_n(A)\circ \tau_a(x)} dm_n \ e^{-C D(x,y)}.
 \end{align*}
Hence, if $n$ is chosen such that $\epsilon = d_n^{-\alpha} D(x,y) < 1/a$, 
\begin{align*}
& W_{\widetilde{D}} ( (\mathcal{L}_A^*)^n   (\delta_x) ,                 (\mathcal{L}_A^*)^n   (\delta_y) ) \\
& \leq \int \widetilde{D}(x,y) dQ_n(x,y) \\ 
&= \int_{\Delta_\epsilon^c}   \widetilde{D}(x,y) dQ_n(x,y) +  \int_{\Delta_\epsilon}   \widetilde{D}(x,y) dQ_n(x,y)\\
& \leq 1 - Q_n(\Delta_\epsilon) + a \epsilon Q_n(\Delta_\epsilon) \\
&= 1 - Q_n(\Delta_\epsilon)(1 - a \epsilon) \\
& \leq 1 - e^{-C D(x,y)}(1 - a d_n^{-\alpha} D(x,y)).
 \end{align*}
The assertion follows when including the case $\epsilon \geq 1/a$ for completeness.
\end{proof}

In order to employ Lemma \ref{loc} and \ref{global-contraction} for getting estimates for arbitrary measures, we recall the following basic fact:
assume that $n \in \mathbb{N}$ and  $Q_{\xi,\eta} \in \Gamma( (\mathcal{L}_A^*)^n (\delta_\xi), (\mathcal{L}_A^*)^n (\delta_\eta) )$, are such that, $(\xi, \eta) \mapsto \int \widetilde{D}(x,y) dQ_{\xi, \eta}(x,y)$ is measurable, then, for any $\mu,\nu \in \mathcal{P}(X)$ and any $\Pi \in \Gamma(\mu,\nu)$, we  get that 
 \[ dQ_{\xi,\eta}(x,y) d\Pi(\xi,\eta) \in   \Gamma( (\mathcal{L}_A^*)^n (\mu) , (\mathcal{L}_A^*)^n (\nu) ). \]
And, in particular, by taking the infimum over all possible couplings $Q_{\xi,\eta}$,  
\begin{equation} 
\label{rre2} 
W_{\widetilde{D}} (( \mathcal{L}_A^*)^n   (\mu) ,                 (\mathcal{L}_A^*)^n (\nu) ) \leq    \int   W_{\widetilde{D}} \,(\mathcal{L}_A^*)^n (\delta_x), (\mathcal{L}_A^*)^n(\delta_y))\, d \Pi(x,y).
 \end{equation}

This allows us to  summarize Lemma \ref{loc} and \ref{global-contraction} into a single estimate. In order to do so, set
\[ r_n(t) := 
\begin{cases}
1 - e^{-\sum_{i=1}^\infty d_i^{-\alpha} t}\left(1 -  \min \{ 1, t a d_n^{-\alpha} \} \right) & : \; t \geq \frac{1}{a} \\
\frac{3}{4} &: \; t < \frac{1}{a}
\end{cases}
\]
\begin{lemma}\label{total} Assume that $\mu,\nu \in \mathcal{P}(X)$ and  that $\Pi \in \Gamma(\mu,\nu)$. Then, for any $n$ such that $(d_n)^\alpha \geq 8/3$, 
\[ W_{\widetilde{D}} ( \mathcal{L}_A^*)^n   (\mu), (\mathcal{L}_A^*)^n (\nu) ) \leq  \int r_n(D(x,y))  \widetilde{D}(x,y) d  \Pi(x,y). 
\] 
\end{lemma}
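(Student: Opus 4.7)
The plan is to bound the integrand in inequality \eqref{rre2} pointwise by $r_n(D(x,y))\widetilde{D}(x,y)$ via a case distinction on whether $\widetilde{D}(x,y) < 1$ or $\widetilde{D}(x,y) = 1$, and then integrate against $\Pi$. First, I would recall that
\[ W_{\widetilde{D}}((\mathcal{L}_A^*)^n(\mu),(\mathcal{L}_A^*)^n(\nu)) \leq \int W_{\widetilde{D}}((\mathcal{L}_A^*)^n(\delta_x),(\mathcal{L}_A^*)^n(\delta_y))\, d\Pi(x,y), \]
which is exactly \eqref{rre2}, and note that the hypothesis $(d_n)^\alpha \geq 8/3$ is equivalent to the condition $(d_n)^{-\alpha} \leq 3/8$ required in Lemma \ref{loc}; throughout I keep the constant $a = \max\{8C_A/3,1\}$ fixed as before.

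Next, I would split the domain of integration into $A_1 := \{(x,y) : D(x,y) < 1/a\}$ and $A_2 := \{(x,y) : D(x,y) \geq 1/a\}$. On $A_1$, one has $\widetilde{D}(x,y) = aD(x,y) < 1$, so Lemma \ref{loc} gives
\[ W_{\widetilde{D}}((\mathcal{L}_A^*)^n(\delta_x),(\mathcal{L}_A^*)^n(\delta_y)) \leq \tfrac{3}{4}\,\widetilde{D}(x,y) = r_n(D(x,y))\,\widetilde{D}(x,y), \]
by the definition of $r_n$ for $t < 1/a$. On $A_2$, one has $\widetilde{D}(x,y) = 1$, and Lemma \ref{global-contraction} gives the bound
\[ 1 - e^{-\sum_i d_i^{-\alpha} D(x,y)}\bigl(1 - a d_n^{-\alpha}D(x,y)\bigr), \]
which is valid whenever $a d_n^{-\alpha} D(x,y) < 1$ and hence the stated bound in \ref{global-contraction}. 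To match the precise form of $r_n$, I would use the trivial bound $W_{\widetilde{D}} \leq 1$ whenever $a d_n^{-\alpha} D(x,y) \geq 1$; this is exactly what is encoded in the $\min\{1, t a d_n^{-\alpha}\}$ appearing in the definition of $r_n$ for $t \geq 1/a$, since then the factor $(1 - \min\{1, t a d_n^{-\alpha}\})$ vanishes and $r_n(t) = 1$. Combining these two subcases on $A_2$ yields exactly $r_n(D(x,y))\widetilde{D}(x,y)$.

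Putting the two regions together, the integrand in \eqref{rre2} is bounded pointwise by $r_n(D(x,y))\widetilde{D}(x,y)$, and integrating against $\Pi$ yields the desired inequality. There is no genuine obstacle here since both auxiliary estimates have already been established; the only care needed is verifying that the piecewise definition of $r_n$ correctly matches the cases $\widetilde{D}(x,y) < 1$ (where local contraction by $3/4$ applies) and $\widetilde{D}(x,y) = 1$ (where the coupling from Lemma \ref{global-contraction} together with the trivial upper bound of $1$ applies).
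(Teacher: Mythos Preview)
Your proposal is correct and follows essentially the same approach as the paper's proof: the paper divides the domain of integration in \eqref{rre2} into the three regions $\{D(x,y) < 1/a\}$, $\{1/a \leq D(x,y) < d_n^\alpha/a\}$, and $\{D(x,y) \geq d_n^\alpha/a\}$, applying Lemma \ref{loc}, Lemma \ref{global-contraction}, and the trivial bound $W_{\widetilde{D}} \leq 1$ respectively, which is exactly your decomposition into $A_1$ and the two subcases of $A_2$. Your verification that the piecewise definition of $r_n$ matches these cases is accurate.
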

 
\begin{proof} By dividing the domain of integration in \eqref{rre2} into $\{ (x,y): D(x,y) < 1/a \}$,  $\{(x,y): 1/a \leq D(x,y) < d_n^\alpha/a\}$ and  $\{(x,y): D(x,y) \geq d_n^\alpha/a\}$, the first immediately follows by application of 
 Lemma \ref{loc} and \ref{global-contraction}.   
\end{proof} 

As an application, we obtain uniform contraction on certain subsets of $\mathcal{P}(X)$ which we define now. For $\gamma > 0$ and $\epsilon > 0$, set 
\[ \mathcal{P}_{\gamma,\epsilon} (X) := \left\{ \mu \in \mathcal{P}(X) : \mu(\{ x \in X: D(x,0) > \gamma \})< \epsilon  \right\}. \]
\begin{proposition}\label{prop:global-with-tails} Assume that $\mu,\nu \in \mathcal{P}(X)$ and that $\gamma \geq  1/a$ 
is chosen such that $\mu,\nu \in \mathcal{P}_{\gamma,\epsilon} (X)$, for $\epsilon := W_{\widetilde{D}}(\mu,\nu)/4$. Then, for any   $n$ with $(d_n)^\alpha \geq 8/3$,   
\[ W_{\widetilde{D}} ((	 \mathcal{L}_A^*)^n   (\mu), (\mathcal{L}_A^*)^n (\nu) )  \leq 
\left( 
1 - \frac{1 -  \min \{ 1,  2 a \gamma d_n^{-\alpha} \} }{2e^{2 \gamma \sum_{i=1}^\infty d_i^{-\alpha} }}
\right)
W_{\widetilde{D}}(\mu,\nu).
\]
\end{proposition}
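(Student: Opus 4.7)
The plan is to apply Lemma \ref{total} to an (essentially) optimal coupling and then exploit the tail assumption by splitting the domain of integration according to whether both points lie in a large ball around $0$.

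Concretely, I would take $\Pi \in \Gamma(\mu,\nu)$ either optimal or $\delta$-optimal for $W_{\widetilde{D}}$, so that $\int \widetilde{D}\, d\Pi = W_{\widetilde{D}}(\mu,\nu)$, and set $B := \{z \in X : D(z,0) \leq \gamma\}$. Since $\mu, \nu \in \mathcal{P}_{\gamma,\epsilon}(X)$, the first and second marginals of $\Pi$ assign mass less than $\epsilon$ to $B^c$, hence $\Pi(X^2 \setminus (B\times B)) \leq \Pi(B^c\times X) + \Pi(X \times B^c) < 2\epsilon = W_{\widetilde{D}}(\mu,\nu)/2$. The key geometric fact is that $D = D_X^\alpha$ is a genuine metric for $\alpha \in (0,1]$ (subadditivity of $t \mapsto t^\alpha$), so by the triangle inequality $D(x,y) \leq D(x,0) + D(0,y) \leq 2\gamma$ whenever $(x,y) \in B\times B$.

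Next I would observe that, since $\gamma \geq 1/a$, the value $2\gamma$ lies in the regime $t \geq 1/a$ defining the first branch of $r_n$, and that $r_n$ is nondecreasing there: both $t \mapsto e^{-Ct}$ is decreasing and $t \mapsto \min\{1, t a d_n^{-\alpha}\}$ is nondecreasing, with $C := \sum_i d_i^{-\alpha}$. Therefore $r_n(D(x,y)) \leq r_n(2\gamma)$ on $B \times B$. Splitting the integral from Lemma \ref{total} and using $\widetilde{D} \leq 1$ on $X^2$ yields
\begin{align*}
\int r_n(D)\widetilde{D}\, d\Pi
&\leq r_n(2\gamma)\!\int_{B\times B}\!\!\widetilde{D}\, d\Pi + \int_{(B\times B)^c}\!\!\widetilde{D}\, d\Pi \\
&= r_n(2\gamma) W_{\widetilde{D}}(\mu,\nu) + (1-r_n(2\gamma))\!\int_{(B\times B)^c}\!\!\widetilde{D}\, d\Pi \\
&\leq r_n(2\gamma) W_{\widetilde{D}}(\mu,\nu) + (1-r_n(2\gamma))\, 2\epsilon.
\end{align*}

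Substituting $2\epsilon = W_{\widetilde{D}}(\mu,\nu)/2$ gives the bound $\bigl(\tfrac{1}{2} + \tfrac{1}{2} r_n(2\gamma)\bigr) W_{\widetilde{D}}(\mu,\nu)$, and an elementary rearrangement using $r_n(2\gamma) = 1 - e^{-2\gamma C}(1 - \min\{1, 2a\gamma d_n^{-\alpha}\})$ produces precisely the claimed constant $1 - (1 - \min\{1, 2a\gamma d_n^{-\alpha}\})/(2 e^{2\gamma C})$.

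I expect the only delicate point to be the verification that $D_X^\alpha$ satisfies the triangle inequality (and hence that $\widetilde{D}$ is indeed a metric used for the Wasserstein distance throughout); everything else is a careful bookkeeping of the splitting and the monotonicity of $r_n$. The tail assumption $\epsilon = W_{\widetilde{D}}(\mu,\nu)/4$ is tailored exactly so that $(1 - r_n(2\gamma))\cdot 2\epsilon$ combines with $r_n(2\gamma) W_{\widetilde{D}}(\mu,\nu)$ into the stated contraction factor, without any remainder term.
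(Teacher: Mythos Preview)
Your proof is correct and follows essentially the same route as the paper: take an optimal coupling $\Pi$, use the triangle inequality for $D=D_X^\alpha$ to bound $D(x,y)\leq 2\gamma$ on the good set, apply Lemma~\ref{total} with the monotonicity of $r_n$ to replace $r_n(D(x,y))$ by $r_n(2\gamma)$ there, and bound the bad set by its $\Pi$-measure $< 2\epsilon$ using $\widetilde{D}\leq 1$. The only cosmetic difference is that you split on $B\times B$ (both points in the $\gamma$-ball) whereas the paper splits directly on $\{D(x,y)\leq 2\gamma\}$; since $B\times B\subset\{D(x,y)\leq 2\gamma\}$ and the marginal bound on $\Pi((B\times B)^c)$ is the same $2\epsilon$, the two decompositions yield the identical final estimate.
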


\begin{proof} By the triangle property, $D(x,y) \leq 2 \gamma$ whenever $D(x,0),D(y,0) \leq  \gamma$. Hence, 
\[\{ D(x,y) > 2\gamma \} \subset (\{ x: D(x,0) > \gamma\} \times X) \cup (X \times \{ y: D(0,y) > \gamma\}).  \]
Therefore, for any $\Pi \in \Gamma(\mu,\nu)$, $\Pi(\{ D(x,y) > 2\gamma \}) < 2 \epsilon$.  Now assume that $\Pi$ is an optimal coupling, that is, $W_{\widetilde{D}}(\mu,\nu) = \int \widetilde{D} d\Pi$, and that $\gamma > 1/a$. Then, by Lemma \ref{total}, 
\begin{align*}  
& W_{\widetilde{D}} ( \mathcal{L}_A^*)^n   (\mu), (\mathcal{L}_A^*)^n (\nu) ) \\  
&\leq \int \left(1_{\{D(x,y) \leq 2\gamma\}} r_n(2\gamma) + 1_{\{D(x,y) > 2\gamma \}}\right) \widetilde{D}(x,y) d\Pi\\
 & =  r_n(2\gamma) \int \widetilde{D}(x,y) d\Pi + (1- r_n(2\gamma)) \Pi(\{D(x,y) > 2\gamma \}) \\
 & \leq \left( r_n(2\gamma)  + \frac{2\epsilon (1-r_n(2\gamma))}{W_{\widetilde{D}}(\mu,\nu)} \right) W_{\widetilde{D}}(\mu,\nu) \\
 &  =  \left( 1 - \frac{1 - r_n(2\gamma)}{W_{\widetilde{D}}(\mu,\nu)}(W_{\widetilde{D}}(\mu,\nu) - 2\epsilon ) \right) W_{\widetilde{D}}(\mu,\nu) \\
 &= \left( 1 - \frac{1 - r_n(2\gamma)}{2}  \right) W_{\widetilde{D}}(\mu,\nu).
\end{align*}
This proves the assertion.  
\end{proof}

As an immediate consequence, one obtains that for any $\mu,\nu \in \mathcal{P}(X)$, there exists $n$ such that the distance between $(\mathcal{L}_A^*)^n (\mu)$ and $(\mathcal{L}_A^*)^n (\nu)$ is strictly contracted, even though the contraction rate depends on the distance as well as the tails of $\mu$ and $\nu$. 
In particular, $L = \lim W_{\widetilde{D}}((\mathcal{L}_A^*)^n (\mu),(\mathcal{L}_A^*)^n (\nu))$ exists. In order to guarantee that $L=0$ and that $((\mathcal{L}_A^*)^n (\mu))_n$ converges, we now employ the notion of adapted tails as introduced in 
Definition \ref{Def:adapted}.
Recall that 
we say that 
$m$ has adapted tails if, for any $\epsilon > 0$ there exists a sequence of positive numbers $(\kappa_n)$ such that 
\begin{enumerate}
\item $\sum_{n=1}^\infty m(\mathbb{R}\setminus[-\beta_1^n \kappa_n,\beta_1^n \kappa_n]) < \epsilon$,
\item the sequence $ (\kappa_n) $ is in $X$. 
\end{enumerate}
As shown below, this condition allows to control the tails of $(\mathcal{L}_A^*)^n (\mu)$ asymptotically. Before giving examples, we prove the main result of this appendix. 
\begin{theorem}\label{theo:conformal-measure} 
Assume that $X = c_0(\mathbb{R})$ or $X= l^p(\mathbb{R}),\; 1 \leq p < \infty$  and that $L : X \to X$ a weighted shift satisfying \eqref{d}. Moreover assume that $A \in  \mathrm{Lip}_b(D_X^{\alpha}, X)$ with $\mathcal{L}_A(1) =1$ and that $m$ has adapted tails. Then, for any $\nu \in  \mathcal{P}(X)$, the limit 
\[ \mu := \lim_{n \to \infty}  (\mathcal{L}_A^*)^n (\nu) \]
exists in $\mathcal{P}(X)$. 
Moreover, $\mu$ is the unique element in $\mathcal{P}(X)$ such that  
$$\mathcal{L}_A^* (\mu) = \mu.$$
\end{theorem}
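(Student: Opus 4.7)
My approach will be to combine the contraction estimates of Lemmas \ref{loc}--\ref{total} and Proposition \ref{prop:global-with-tails} with tail control derived from the adapted tails hypothesis, establishing that $((\mathcal{L}_A^*)^n(\nu))_n$ is a Cauchy sequence in the complete Wasserstein space $(\mathcal{P}(X), W_{\widetilde{D}})$. There are three ingredients: a tail-control step (to place the iterates uniformly into $\mathcal{P}_{\gamma,\epsilon}(X)$), the contraction supplied by Proposition \ref{prop:global-with-tails}, and the combination yielding the Cauchy property, existence, and uniqueness.

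First I would prove that for fixed $\nu \in \mathcal{P}(X)$ and $\epsilon > 0$, there exist $\gamma > 0$ and $N \in \mathbb{N}$ with $(\mathcal{L}_A^*)^n(\nu) \in \mathcal{P}_{\gamma,\epsilon}(X)$ for every $n \geq N$. Writing
\[
(\mathcal{L}_A^*)^n(\nu)(\{\|y\|_X > \gamma\}) = \int_X \mathcal{L}_A^n(\mathbf{1}_{\{\|\cdot\|_X > \gamma\}})(x)\, d\nu(x)
\]
and using $\mathcal{L}_A(1) = 1$ so that $\mathcal{L}_A^n$ is a genuine averaging operator, the explicit formula for $\mathcal{L}_A^n$ from Section \ref{RPF-theorem} identifies the preimage of $x$ under $L^n$ as having coordinates $y_j = r_{n-j+1}/\beta_1^{j-1}$ for $j \leq n$ and $y_{n+k} = x_k/\beta_k^n$ for $k \geq 1$. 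Since $\beta_k^n \geq d_n$, a splitting via the triangle inequality shows that $\{\|y\|_X > \gamma\}$ is contained in the union of (i) $\{\exists j:\, |r_{n-j+1}| > \beta_1^{j-1}\kappa_j\}$, for a sequence $(\kappa_j) \in X$ scaled so that $\|\kappa\|_X < \gamma/2$, and (ii) $\{\|x\|_X > \gamma d_n/2\}$. Definition \ref{Def:adapted} (modulo an easily-absorbed indexing shift) produces $(\kappa_j)$ for which the $m^n$-probability of (i) is at most $\epsilon/2$; since $d_n \to \infty$ by \eqref{d} and $\nu$ is tight as a Borel probability on a Polish space, the $\nu$-probability of (ii) is less than $\epsilon/2$ for all sufficiently large $n$.

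Next, fix $\nu_1, \nu_2 \in \mathcal{P}(X)$ and set $W_n := W_{\widetilde{D}}((\mathcal{L}_A^*)^n(\nu_1), (\mathcal{L}_A^*)^n(\nu_2))$. Lemma \ref{total} together with the estimate $r_n \leq 1$ yields the non-expansivity $W_{n+k} \leq W_n$ whenever $d_k^\alpha \geq 8/3$, so each arithmetic subsequence $(W_{jk+r})_j$ is monotone decreasing. For any $\epsilon_0 > 0$ the first step supplies $\gamma$ and $N$ with $(\mathcal{L}_A^*)^n(\nu_i) \in \mathcal{P}_{\gamma, \epsilon_0}(X)$ for $n \geq N$ and $i=1,2$; choose also $k$ with $d_k^\alpha \geq 8/3$ and $2a\gamma d_k^{-\alpha} < 1$. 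Whenever $n \geq N$ and $W_n \geq 4\epsilon_0$, Proposition \ref{prop:global-with-tails} applies with $\epsilon = W_n/4 \geq \epsilon_0$, giving $W_{n+k} \leq \rho W_n$ for
\[
\rho := 1 - \frac{1 - 2a\gamma d_k^{-\alpha}}{2 e^{2\gamma C}} < 1,
\]
independent of $n$, where $C := \sum_{i=1}^\infty d_i^{-\alpha}$. Iterating along the progression $(N + jk)_j$ forces each monotone subsequence $(W_{jk+r})_j$ to drop eventually below $4\epsilon_0$, so $\limsup_n W_n \leq 4\epsilon_0$, and as $\epsilon_0$ was arbitrary, $W_n \to 0$.

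Applying this contraction with $\nu_1 = \nu$ and $\nu_2 = (\mathcal{L}_A^*)^\ell(\nu)$ for arbitrary $\ell \geq 0$, the constants $N, \gamma, k, \rho$ depend only on $\nu$ and $\epsilon_0$ (the tail bound for $(\mathcal{L}_A^*)^n((\mathcal{L}_A^*)^\ell\nu) = (\mathcal{L}_A^*)^{n+\ell}\nu$ only requires $n + \ell \geq N$, which holds uniformly in $\ell \geq 0$ once $n \geq N$). Hence $((\mathcal{L}_A^*)^n(\nu))_n$ is Cauchy in the complete metric space $(\mathcal{P}(X), W_{\widetilde{D}})$ and converges to a limit $\mu \in \mathcal{P}(X)$; the weak-$\ast$ continuity of $\mathcal{L}_A^*$ (from $\mathcal{L}_A\colon \mathcal{C}_b(X) \to \mathcal{C}_b(X)$) then yields $\mathcal{L}_A^*\mu = \mu$. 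For uniqueness, any two fixed points $\mu, \mu'$ are Borel probabilities on the Polish space $X$ and hence tight (Ulam), so they both lie in some $\mathcal{P}_{\gamma, \epsilon_0}(X)$; the contraction step applied directly to $(\mu,\mu')$ gives $W_{\widetilde{D}}(\mu,\mu') = \lim_n W_{\widetilde{D}}((\mathcal{L}_A^*)^n\mu, (\mathcal{L}_A^*)^n\mu') = 0$. The main obstacle is the tail-control step: it requires combining two independent sources of mass escaping to infinity — the randomness in $(r_1, \ldots, r_n)$ controlled by adapted tails, and the initial distribution $\nu$ controlled by tightness — into a single uniform estimate; the absence of a uniform contraction rate (Remark \ref{esta1}) is then reflected in the loss of geometric convergence in the contraction step, since making $\epsilon_0$ small forces $\gamma$ to grow, which in turn drives $\rho(\gamma, k)$ back toward $1$.
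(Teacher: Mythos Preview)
Your argument is correct and follows the paper's strategy closely: the tail-control step is essentially the paper's estimate \eqref{eq:tails} (modulo the factor $C = e^{\|A\|_\infty}$, which you absorb implicitly when passing from the weighted kernel to the $m^n$-probability), and your contraction step is the paper's Step~2.

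The one genuine difference lies in how you extract the limit. The paper applies the tail estimate to $(\mathcal{L}_A^*)^n(\delta_0)$ to obtain tightness, invokes Prokhorov's theorem to produce a subsequential limit $\mu$, and then upgrades to full convergence via Step~2. You instead exploit that the constants $N,\gamma,k,\rho$ in the contraction step can be chosen uniformly in $\ell$ when comparing $\nu$ with $(\mathcal{L}_A^*)^\ell(\nu)$---because the tail control for the latter is inherited from that for $\nu$ at time $n+\ell \geq N$---and conclude directly that $((\mathcal{L}_A^*)^n(\nu))_n$ is Cauchy in the complete space $(\mathcal{P}(X), W_{\widetilde{D}})$. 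Your route avoids Prokhorov and the verification that the sets $\Omega_\epsilon$ are compact, at the cost of tracking the uniformity in $\ell$; the paper's route is slightly more classical but needs the extra compactness ingredient. Both are sound and yield the same conclusion.
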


\begin{proof} As the first step in the proof, we derive a formula for the asymptotic tail  of $ (\mathcal{L}_A^*)^n (\nu)$. 
In order to do so, 
for $ (\kappa_j)  \in X$,   $n \in \mathbb{N}$ and $K > 0$ given, set 
\[ \Omega 
 := \left\{ (x_j) : x_j \in [-\kappa_j,\kappa_j ] \hbox{ for } 0 \leq j < n, \; \|(x_n, x_{n+1}, \ldots)   \|_X \leq K\right\}. 
  \]   
Then, for $\nu \in  \mathcal{P}(X)$ and $\beta_1^0 :=1$, 
\begin{align*} 
 &  (\mathcal{L}_A^*)^n (\nu)\left(\Omega^c\right) \\
 & = (\mathcal{L}_A^*)^n (\nu)\left(\textstyle \bigcup_{j=0}^{n-1} \{(x_i) : |x_j| > \kappa_j \} \cup
 \{ (x_i) : \|(x_n, x_{n+1}, \ldots)   \|_X > K \} \right) \\
 & \leq \sum_{j=0}^{n-1}  (\mathcal{L}_A^*)^n (\nu)\left(  \{|x_j| > \kappa_j \} \right) + 
 (\mathcal{L}_A^*)^n (\nu)\left(  \{ \|(x_n, x_{n+1}, \ldots)   \|_X > K \} \right)\\
& =  \int \mathcal{L}_A^n \left( \sum_{j=0}^{n-1} 
1_{ \left\{ {|x_0|}/{\beta_1^j} > \kappa_j \right\} }\circ L^{j}  + 
1_{ \left\{ \|( {x_n}/{\beta_1^n},  {x_{n+1}}/{ \beta_2^n}, \ldots)   \|_X > K   \right\}  }\circ L^{n} \right) d\nu \\
& =   \int \mathcal{L}_A^{n-j} \left( 
1_{ \{| {x_0}| > {\beta_1^j}\kappa_j \} } \right) d\nu
+ \int   
1_{ \{ \|( \frac{x_n}{\beta_1^n}, \frac{x_{n+1}}{ \beta_2^n}, \ldots)   \|_X > K \}   }  d\nu \\
& \leq  \sum_{j=0}^{n-1} \int \mathcal{L}_A^{n-j} \left( 
1_{ \{| {x_0}| > {\beta_1^j}\kappa_j \} } \right) d\nu
+
\nu  \left( 
  \{ (x_i) : \|(x_i) \| > d_n K  \}  \right). 
\end{align*}
As $A$ is bounded, it follows that $e^A = C^{\pm 1}$. In particular, $\mathcal{L}_A(1_B) = C^{\pm 1} m(B)$ for any Borel set $B$ which only depends on the first coordinate.      
Hence, if $(\kappa_j)$ is chosen for $\epsilon > 0$ according to Definition \ref{Def:adapted}, then, as $A$ is normalized,  
\begin{align} \nonumber
  (\mathcal{L}_A^*)^n (\nu)\left(\Omega^c\right)  & \leq 
  C \sum_{k=0}^{n-1} m(\{x \in \mathbb{R} : |x| > {\beta_1^j}\kappa_j  \} ) 
+
\nu  \left( 
  \{ (x_i) : \|(x_i) \| > d_n K  \}  \right)\\
\label{eq:tails}  
  & \leq C \epsilon + \nu  \left( 
  \{ (x_i) : \|(x_i) \| > d_n K  \}  \right) \xrightarrow{n \to \infty } C\epsilon. 
\end{align}

In the second step, we show that $ W_{\widetilde{D}} ((\mathcal{L}_A^*)^n(\mu),(\mathcal{L}_A^*)^n(\nu)) \to 0$ as $n \to \infty$.
So assume that $\mu,\nu \in \mathcal{P}(X)$. By Lemma \ref{total}, $W_{\widetilde{D}} ((\mathcal{L}_A^*)^n(\mu),(\mathcal{L}_A^*)^n(\nu))$ is decreasing in $n$ and, in particular, 
\[L := \lim_{n\to \infty} W_{\widetilde{D}} ((\mathcal{L}_A^*)^n(\mu),(\mathcal{L}_A^*)^n(\nu))\] exists. Now assume that $L>0$. Then, as $m$ is adapted,  there 
exists $(\kappa_j) \in X$ with   $\sum_{j=1}^\infty m([-\beta_1^j \kappa_j,\beta_1^j \kappa_j]^{c}) < L/(8C)$. For $K:= \|(\kappa_j)\|_X$,    
the estimate  \eqref{eq:tails} combined with $d_n \to \infty$ implies that there exists $n_0 \in \mathbb{N}$ such that
 $ (\mathcal{L}_A^*)^n (\nu)\left(\Omega^c\right) < L/4 $ for any $n \geq n_0$. However, as $\Omega$ for these choices of $n$ and $K$ satisfies $\Omega \subset \{ x \in X: \|x\|_X \leq 2\|(\kappa_i)\|_X \}$, it follows that 
\[(\mathcal{L}_A^*)^n(\mu),(\mathcal{L}_A^*)^n(\nu) \in  \mathcal{P}_{2\|(\kappa_i)\|_X, L/4}(X) \hbox{ for all } n \geq n_0.\]   
We now apply Proposition \ref{prop:global-with-tails} to the pair $((\mathcal{L}_A^*)^{n}(\mu),(\mathcal{L}_A^*)^{n}(\nu))$. 
Namely, for $k$ chosen such that the factor on the right hand side in Proposition  \ref{prop:global-with-tails} is strictly smaller than 1, it follows that 
\[L = \lim_{n\to \infty} W_{\widetilde{D}} ((\mathcal{L}_A^*)^{n+k}(\mu),(\mathcal{L}_A^*)^{n+k}(\nu)) < \lim_{n\to \infty} W_{\widetilde{D}} ((\mathcal{L}_A^*)^{n}(\mu),(\mathcal{L}_A^*)^{n}(\nu)) =L,\]
which is absurd. Hence, $L=0$ and the assertion of the second step is proven.

In the third step, we  use \eqref{eq:tails} to obtain that $(\mathcal{L}_A^*)^n (\delta_0)$ is tight, where $0$ is the origin in $X$. 
We therefore recall the notion of tightness and Prokhorov's theorem. Namely, a sequence $(\mu_n : n \in \mathbb N)$ of probability measures is tight if for any $\epsilon >0$ there exists a compact set $K$ such that $\mu_n (K) > 1- \epsilon$ for any $n \in \mathbb N$. Prokhorov's theorem then states that tightness and sequential compactness are equivalent.   

Now assume that $\epsilon > 0$ and that   $(\kappa_j)$ is chosen  according to Definition \ref{Def:adapted}, and set $\Omega_\epsilon := \{(x_i) \in X : |x_i|\leq \kappa_i \; \forall i \geq 0 \}$. 
As $0 \in \Omega_\epsilon$  the estimate \eqref{eq:tails} implies that
\[ (\mathcal{L}_A^*)^n (\delta_0) \left( \Omega_\epsilon \right) \leq C\epsilon. \]
Moreover, as  it easily  can be shown, $\Omega_\epsilon$ is compact and therefore, $(\mathcal{L}_A^*)^n (\delta_0)$ is tight. Hence, by Prokhorov's theorem, there exists a subsequence $(n_k)$ with $n_k \nearrow \infty$ such that $ \mu :=\lim_{k \to \infty} (\mathcal{L}_A^*)^n (\delta_0) $ exists. 
   
In the last step, we prove  for any $\nu \in  \mathcal{P}(X)$ that $ \mu = \lim_{n \to \infty}  (\mathcal{L}_A^*)^n (\nu)$ and that $\mu = \mathcal{L}_A^*(\mu)$. 
In order to do so, we apply Step 2 to the measures $\delta_0$ and $\mathcal{L}_A^* (\delta_0)$, which implies by continuity of  $\mathcal{L}_A^*$ that
\[\mu = \lim_{k \to \infty} (\mathcal{L}_A^*)^n (\delta_0) = \lim_{k \to \infty} (\mathcal{L}_A^*)^{n+1} (\delta_0)  = \mathcal{L}_A^*(\mu).\] 
Finally, the remaining assertion follows from a further application of Step 2 to $\nu$ and $\mu$.
\end{proof}

We now  provide some examples of adapted measures. Therefore recall that a measure on $\mathbb{R}$ has polynomial tails of order $\gamma$ if there exist $C >0$ and  $\gamma > 1$ such that 
\[ m(\{ x : |x| > z \}) \leq C z^{-\gamma} \hbox{ for all } z>0.  \]   
Hence,  for class of measures and  $\kappa_n := Bn^\ell/d_n$, for some $B> 0$ and $\ell> \gamma^{-1}$, it follows that  
 \begin{align*}
\sum_{n=1}^\infty m(\{ x : |x| >  \kappa_n \beta_1^n \}) 
& 
 \leq  \sum_{n=1}^\infty m(\{ x : |x| >  \kappa_n d_n\}) 
 \leq  C \sum_{n=1}^\infty L^{-\gamma} n^{-\ell \gamma}    \\
& 
< \frac{C\ell \gamma}{\ell \gamma -1} L^{-\ell \gamma} \xrightarrow{L \to \infty} 0. 
\end{align*}
Hence, $m$ is adapted whenever $(n^\ell/d_n) \in X$ for some $\gamma^{-1} < \ell \leq 1$. The following slightly more specific examples of  adapted measures $m$ with polynomial tails easily follows from this observation.
\begin{proposition} \label{prop:polynomial-tails}
Assume that $m$  has polynomial tails of order $\gamma$. Then the following holds.
\begin{enumerate}
\item If $n/d_n  \to 0$ for some $\ell > \gamma^{-1}$  and $X = c_0(\mathbb{R})$, then   $m$  is adapted.
\item If $d_n > Cn^{\ell} $ for some $\ell > \gamma^{-1} + 1/p$, and $X= l^p(\mathbb{R})$ with  $1 \leq p < \infty$,
then   $m$  is adapted.
\end{enumerate} 
In particular, if  $d_n$ grows exponentially, then any $m$  with polynomial tails is adapted.
\end{proposition}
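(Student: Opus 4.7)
The plan is to verify the two defining conditions of Definition \ref{Def:adapted} using the candidate sequence $\kappa_n := B n^\ell / d_n$, exactly as in the calculation displayed immediately before the proposition. That calculation already handled condition (1): for any $\ell > \gamma^{-1}$, the polynomial tail bound $m(\{|x| > z\}) \leq C z^{-\gamma}$ yields
\[
\sum_{n=1}^\infty m\!\left(\{|x| > \kappa_n \beta_1^n\}\right) \leq \sum_{n=1}^\infty m\!\left(\{|x| > B n^\ell\}\right) \leq C B^{-\gamma} \sum_{n=1}^\infty n^{-\ell \gamma},
\]
and the sum $\sum n^{-\ell \gamma}$ converges because $\ell \gamma > 1$; choosing $B$ large makes the total smaller than any prescribed $\epsilon$. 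Therefore the only real task is to confirm, in each of the two stated regimes, that one can pick $\ell > \gamma^{-1}$ so that $(\kappa_n) \in X$, which is condition (2).

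For item (1), we are given that $n^\ell/d_n \to 0$ for some $\ell > \gamma^{-1}$. Taking this same $\ell$ and any $B > 0$ yields $\kappa_n = B n^\ell / d_n \to 0$, so $(\kappa_n) \in c_0(\mathbb{R})$, and condition (1) follows by taking $B$ large. For item (2), we are given $d_n > C n^{\ell'}$ for some $\ell' > \gamma^{-1} + 1/p$. Since $\ell' - 1/p > \gamma^{-1}$, we may pick $\ell$ strictly between $\gamma^{-1}$ and $\ell' - 1/p$; then
\[
\sum_{n=1}^\infty \kappa_n^p \leq B^p C^{-p} \sum_{n=1}^\infty n^{p(\ell - \ell')} < \infty
\]
because $p(\ell' - \ell) > 1$. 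Hence $(\kappa_n) \in l^p(\mathbb{R})$, and again condition (1) holds for this choice of $\ell$ by taking $B$ large.

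The final assertion is immediate: if $d_n$ grows exponentially, then $n^\ell/d_n$ decays exponentially for every $\ell$, so the candidate sequence lies in $c_0(\mathbb{R})$ and in every $l^p(\mathbb{R})$ simultaneously, and any choice $\ell > \gamma^{-1}$ works. I do not expect a genuine obstacle here, since the substantive analytic estimate is the one already carried out before the proposition; what remains is purely a matter of matching exponents so that both summability of the tail series and membership in the target sequence space can be achieved by a single value of $\ell$, which is precisely why item (2) requires the stronger growth $\ell' > \gamma^{-1} + 1/p$ rather than merely $\ell' > \gamma^{-1}$.
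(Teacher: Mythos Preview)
Your proposal is correct and follows essentially the same approach as the paper: the paper introduces the candidate $\kappa_n = B n^\ell/d_n$ just before the proposition, carries out the tail-sum estimate there, and then remarks that the proposition ``easily follows from this observation'' once one checks $(n^\ell/d_n)\in X$; your write-up spells out exactly this verification of membership in $c_0(\mathbb{R})$ and $l^p(\mathbb{R})$ under the stated hypotheses. Your reading of item (1) as ``$n^\ell/d_n\to 0$ for some $\ell>\gamma^{-1}$'' is the intended one (the printed ``$n/d_n$'' is a typo), and your interpolation of an exponent $\ell\in(\gamma^{-1},\ell'-1/p)$ in item (2) is the natural way to reconcile the two constraints.
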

Moreover, we say that a measure on $\mathbb{R}$ has exponential tails if there exist $C >0$ and  $0 < \gamma < 1$ such that 
\[ m(\{ x : |x| > z \}) \leq C {\gamma}^z \hbox{ for all } z>0.  \]   
By a similar argument as above, on obtains the following criterion.
\begin{proposition}  \label{prop:exponential-tails}
Assume that $m$  has exponential tails and that $((\log n)/d_n) \in X$. Then  $m$  is adapted.
\end{proposition}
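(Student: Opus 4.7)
The plan is to construct explicitly, for a given $\epsilon>0$, a sequence $(\kappa_n)$ witnessing Definition \ref{Def:adapted}, following the same pattern as in the proof of Proposition \ref{prop:polynomial-tails} but adapted to the faster decay. The natural ansatz is
\[
\kappa_n := L(1+\log n)/d_n,
\]
where $L>0$ is a parameter to be tuned in terms of $\epsilon$ and the constants $C$, $\gamma$ from the exponential tail bound $m(\{|x|>z\})\leq C\gamma^z$.

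First I would verify condition (2) of Definition \ref{Def:adapted}. Writing $\kappa_n = L/d_n + L\log n/d_n$, the second summand lies in $X$ by hypothesis. For the first, observe $1\leq \log n$ for $n\geq 3$, so $(1/d_n)_{n\geq 3}$ is pointwise dominated by $(\log n/d_n)_{n\geq 3}$; since $X$ (being $c_0(\mathbb{R})$ or $l^p(\mathbb{R})$) is a solid Banach sequence space and is invariant under finite modifications, it follows that $(1/d_n)\in X$. Hence $(\kappa_n)\in X$.

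For condition (1), I would use the inequality $\beta_1^n\geq d_n$ (immediate from $d_n=\inf_k \beta_k^n$), which gives
\[
\beta_1^n\kappa_n \geq L(1+\log n).
\]
The exponential tail bound then yields
\[
m\bigl(\mathbb{R}\setminus [-\beta_1^n\kappa_n,\beta_1^n\kappa_n]\bigr)\leq C\gamma^{L(1+\log n)} = C\gamma^L\, n^{L\log\gamma}.
\]
Since $\log\gamma<0$, fixing any $L_0>1/|\log\gamma|$ makes $S_0:=\sum_{n\geq 1} n^{L_0\log\gamma}$ finite; for every $L\geq L_0$ we have $n^{L\log\gamma}\leq n^{L_0\log\gamma}$, so the series is bounded above by $C\gamma^L S_0$. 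As $\gamma^L\to 0$ when $L\to\infty$, one can choose $L$ large enough that $C\gamma^L S_0<\epsilon$, which concludes the verification.

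There is no real obstacle beyond choosing the growth rate of $\kappa_n$ correctly: it must be large enough that $\beta_1^n\kappa_n$ enters the regime where the exponential tail produces a summable bound, yet slow enough that $(\kappa_n)\in X$ under the stated hypothesis. The logarithmic factor $\log n$ is precisely the right scale, since $\gamma^{\log n}=n^{\log\gamma}$ is summable (after multiplication by a constant exponent) while division by $d_n$ together with the hypothesis $(\log n/d_n)\in X$ keeps $(\kappa_n)$ in $X$. The shift $1+\log n$ is only a cosmetic device to make the $n=1$ term behave like the rest.
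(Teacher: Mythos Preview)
Your proof is correct and follows essentially the same approach the paper indicates: the paper does not spell out a proof here but merely says ``by a similar argument as above,'' referring to the polynomial-tail case where $\kappa_n = Bn^\ell/d_n$; your choice $\kappa_n = L(1+\log n)/d_n$ is exactly the natural exponential-tail analogue, and your verification of both conditions of Definition~\ref{Def:adapted} is complete.
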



\begin{thebibliography}{28}
\bibitem{ACR}
D. Aguiar, L. Cioletti, and R. Ruviaro.
\newblock A variational principle for the specific entropy for symbolic systems with uncountable alphabets.
\newblock {\em Math. Nachrichten} 291 (17-18):  2506--2515, 
 (2018).

\bibitem{BCLMS}
A. T. Baraviera, L. M. Cioletti, A. O. Lopes, J. Mohr, and R. R. Souza.
\newblock On the general one-dimensional XY model: positive and zero temperature, selection and non-selection.
\newblock {\em Rev. Math. Phys.} 23 (10): 1063--1113, (2011).

\bibitem{BaMa}
F. Bayart and E. Matheron.
\newblock {\em Dynamics of Linear Operators.}
\newblock Cambridge University Press, Cambridge, (2009).

\bibitem{BR}
F. Bayart and I. Ruzsa,
\newblock Difference sets and frequently hypercyclic weighted shifts.
\newblock {\em Ergod. Th.  Dynam. Sys.} (2015), 35, 691--709.

\bibitem{BeMe}
N. Bernardes and  A. Messaoudi.
\newblock Shadowing and Structural Stability in Linear Dynamical Systems. \newblock 
 {\em Ergod. Th.  Dynam. Sys.} (2020), published online:   
doi.org/10.1017/etds.2019.107.


\bibitem{BCM}
N. Bernardes, P. Cirilo, U. Darji, A. Messaoudi, and E. Pujals.
\newblock Expansivity and shadowing in linear dynamics.
\newblock {\em J. Math. Anal. Appl.} 461 (2018), no. 1, 796--816.

\bibitem{BS}
M. Bessa and M. Stadlbauer. On the Lyapunov spectrum of relative transfer
operators. {\em Stoch. Dyn.}, v. 16, n. 6, 1650024 (25 pages), 2016

\bibitem{Bo}
A. Bourhim.
\newblock {\em Bounded Point Evaluations and Local Spectral Theory.}
\newblock Dissertation, Trieste, (2000) (arXiv:math/0008197v1).

\bibitem{Bol}
F. Bolley. Separability and completeness for the Wasserstein distance. In Seminaire
de probabilities XLI, volume 1934 of Lecture Notes in Math., pages 371--377. Springer,
Berlin, 2008

\bibitem{CSS}
L. Cioletti, E. A. Silva and M. Stadlbauer. Thermodynamic formalism
for topological Markov chains on standard Borel spaces. {\em Discrete
Contin. Dyn. Syst.} 39(11), 6277--6298, 2019

\bibitem{CGP} P. Cirilo, B. Gollobit and E. Pujals.
\newblock Generalized hyperbolicity for linear operators.
\newblock Adv. Math. 387 (2021), 107830



\bibitem{FV}
R. Freire and V. Vargas.
\newblock Equilibrium states and zero temperature limit on topologically transitive countable Markov shifts.
\newblock {\em Trans. Amer. Math. Soc.}, 370 (2018), 8451--8465.

\bibitem{G}
K.-G. Grosse-Erdmann.
\newblock Hypercyclic and chaotic weighted shifts.
\newblock {\em Studia Math.} 139, (2000), no.\ 1, 47--68.

\bibitem{GrMa}
K.-G. Grosse-Erdmann and A. Peris Manguillot.
\newblock {\em Linear Chaos.}
\newblock Springer-Verlag, London, (2011).

\bibitem{Gri}
S. Grivaux.
\newblock Frequently hypercyclic operators with irregularly visiting orbits.
\newblock {\em J. Math. Anal. Appl.} 462, no. 1, 542--553 (2018).

\bibitem{GriMa}
S. Grivaux and E. Matheron.
\newblock Invariant measures for frequently hypercyclic operators.
\newblock {\em Adv. Math.} 265, 371--427 (2014).

\bibitem{HaM}
 M. Hairer and J. C. Mattingly. Spectral gaps in Wasserstein distances and
the 2D stochastic Navier-Stokes equations. Ann. Probab. v. 36, n. 6, 2050–2091,
2008

\bibitem{KLS}
B. Kloeckner, A. O. Lopes, and M. Stadlbauer. Contraction in the Wasserstein
metric for some Markov chains, and applications to the dynamics of expanding
maps. Nonlinearity v.28, N. 11, 4117–-4137, 2015.

\bibitem{K}
B. Kloeckner. An optimal transportation approach to the decay of correlations
for non-uniformly expanding maps. Ergodic Theory Dynam. Systems, 1–-37, 2018.

\bibitem{Lo} A. O. Lopes,
An introduction to Coupling, in ``Modeling, Dynamics, Optimization and Bioeconomics II'', Editors: Alberto Pinto and David Zilberman, Springer Proceedings in Mathematics and Statistics, pp 307-335, (2017) Springer Verlag

\bibitem{LMMS}
A. O. Lopes, J. K. Mengue, J. Mohr, and R. R. Souza.
\newblock Entropy and variational principle for one-dimensional lattice systems with a general {\it a priori} probability: positive and zero temperature.
\newblock {\em Ergodic Theory Dynam. Systems}, 35 (6): 1925--1961, (2015).

\bibitem{LV}
A. O. Lopes and V. Vargas.
\newblock Gibbs States and Gibbsian Specifications on the space $\mathbb{R}^\mathbb{N}$.
\newblock {\em Dyn. Systems: an Int. Jour.}, Volume 35, Issue 2, 216--241 (2020)

\bibitem{LV1}
A. O. Lopes and V. Vargas,
Entropy, pressure, ground states and calibrated
sub-actions for linear dynamics, arXiv (2021)

\bibitem{Elis} E. G. C. Mesquita,
Formalismo termodinamico em
espa\c cos n\~ao-compactos via
Transporte \'Otimo, PhD thesis, UNB, Brasilia (2019)

\bibitem{MU}
S. Muir and M. Urbanski.
\newblock  Thermodynamic formalism for a modified shift map,
\newblock {\em  Stoch. and Dyn.}, Vol. 14, No. 02, 1350020 (2014)

\bibitem{PP}
W. Parry and M. Pollicott.
\newblock Zeta functions and the periodic orbit structure of hyperbolic dynamics.
\newblock {\em Asterisque.} (187-188): 1--268 (1990).

\bibitem{Sar}
O. Sarig, Thermodynamic formalism for countable Markov shifts, in Hyperbolic Dynamics,
Fluctuations and Large Deviations, Proc. Sympos. Pure Math., 89, Amer. Math. Soc., Providence,
RI, 2015, 81--117.

\bibitem{Sta}
M. Stadlbauer, Coupling methods for random topological Markov chains, Ergodic Theory
Dynam. Systems, Vol. 37, Issue 3, 971--994 (2017)


\bibitem{SVZ} M. Stadlbauer, P. Varandas and  X. Zhang
Quenched and annealed equilibrium states for random Ruelle expanding maps and applications.
arXiv:2004.04763, 2020.


\bibitem{Vi}
C. Villani, Topics in optimal transportation, AMS, Providence (2003)

\bibitem{Villani09}
C. Villani,
 Optimal transport, Grundlehren der mathematischen Wissenschaften
Series 
    338,  Springer-Verlag, 2009.

\end{thebibliography}
\end{document}